\newtheorem{Proposition}{Proposition}[section]
\newtheorem{Definition}[Proposition]{Definition}
\newtheorem{Lemma}[Proposition]{Lemma}
\newtheorem{Theorem}[Proposition]{Theorem}
\newtheorem{Corollary}[Proposition]{Corollary}
\newcommand \fourier{\widehat}
\newcommand \valg{\Val^G}
\newcommand \valun{\Val^{U(n)}}
\newcommand \valuinf{\Val^{U(\infty)}}
\newcommand \valsm{\Val^{sm}}
\newcommand \valsmplus{\Val^{sm,+}}
\newcommand \sltwo{\mathfrak{sl}(2)}
\newcommand\length{\operatorname{length}}
\DeclareMathOperator{\vol}{vol}
\newcommand{\norm}[1]{\left\| #1 \right\|}
\newcommand{\R}{\mathbb{R}}
\newcommand{\C}{\mathbb{C}}
\newcommand{\K}{\mathcal{K}}
\newcommand{\Ksm}{\mathcal{K}^{sm}}
\newcommand{\D}{\mathbb{D}}
\newcommand{\F}{\mathbb{F}}
\newcommand{\eps}{\epsilon}
\DeclareMathOperator{\kl}{Kl}
\DeclareMathOperator{\Val}{Val}
\DeclareMathOperator{\Gr}{Gr}
\DeclareMathOperator{\AGr}{\overline{ Gr}}
\title{Hermitian integral geometry}
\author{Andreas Bernig}
\author{Joseph H. G. Fu}
\email{bernig@math.uni-frankfurt.de}
\email{fu@math.uga.edu}
\date{\today}
\address{Institut f\"ur Mathematik, Goethe-Universit\"at Frankfurt,
Robert-Mayer-Str. 10, 60054 Frankfurt, Germany}
\address{ Department of Mathematics, 
University of Georgia, 
Athens, GA 30602, USA}
\begin{document}

\begin{abstract} We give in explicit form the principal kinematic formula for the action of the affine unitary group on $\C^n$, together with a straightforward algebraic method for computing the full array of unitary kinematic formulas, expressed in terms of certain convex valuations introduced, essentially, by H. Tasaki. We introduce also several other canonical bases for the algebra of unitary-invariant valuations, explore their interrelations, and characterize in these terms the cones of positive and monotone elements.

\end{abstract}

\thanks{{\it MSC classification}:  53C65,  
52A22 
\\ Supported
  by the Schweizerischer Nationalfonds grants SNF PP002-114715 and SNF 200020-121506.}
\maketitle 

\section{Introduction}
\subsection{General background} In \cite{fu90}, it was shown that if $\overline G$ is a Lie group acting transitively on the sphere bundle of a Riemannian manifold $M$ then there exist kinematic formulas (cf. \eqref{preliminary kf} below) for certain geometric quantities associated to subspaces $A,B \subset M$; the case $\overline G= SO(n)\ltimes \R^n, M = \R^n$ being the classical kinematic formulas of Blaschke-Santal\'o-Federer-Chern. 
The proof was a distillation of the geometric method used in \cite{chern66} and \cite{federer59} to establish the classical case.

A different, and in some ways more incisive proof of the classical case was provided by \cite{hadwiger57}. Restricting formally to the case where the subspaces are convex sets, Hadwiger displayed a concrete finite basis for the vector space of continuous convex valuations invariant under the euclidean group. The existence of the kinematic formulas is then a simple consequence, and the precise numerical values of the coefficients involved may be calculated using the ``template method", i.e. by evaluating the relevant integrals for enough conveniently chosen $A,B$. This approach leaves the impression that the values of the coefficients are in some way accidental. However, A. Nijenhuis \cite{nijenhuis74} showed by direct calculation that under a suitable renormalization of the Hadwiger basis all of the coefficients are equal to unity.

More recently, S. Alesker \cite{ale03b} gave another proof of the theorem of \cite{fu90} as part of a far-reaching reconceptualization of the theory of convex valuations. He showed that if $G$ is a compact Lie group acting transitively on the sphere of a euclidean space $V$ then the space $\valg(V)$ of continuous convex valuations invariant under the group $\overline G := G\ltimes V$, generated by translations and the action of $G$, is finite dimensional. Just as in the case of the full euclidean group, the theorem of \cite{fu90} follows directly (at least in the euclidean case). In these terms, the result may be stated as follows. 
\begin{Theorem} Let $\phi_1\dots,\phi_N $ be a basis for $\valg(V)$. Given $\mu \in \valg(V)$, there are constants $c^\mu_{ij}, 1\le i,j\le N$, such that for any two compact convex bodies $A,B \subset V$
\begin{equation}\label{preliminary kf}
\int_{\overline G} \mu(A \cap \bar gB) \, d\bar g = \sum_{ij} c^\mu_{ij} \phi_i(A) \phi_j(B).
\end{equation}
\end{Theorem}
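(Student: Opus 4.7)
The plan is to apply Alesker's finite-dimensionality theorem twice: first to reduce the integral, as a function of $A$ alone, to a linear combination $\sum_i a_i(B)\phi_i(A)$, and then to recognize each coefficient $a_i(B)$ as itself a $\valg$-valuation in $B$.

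Fix $B \in \K(V)$ and set $F_B(A) := \int_{\overline G} \mu(A \cap \bar gB)\,d\bar g$. First I would verify that $F_B \in \valg(V)$. The valuation identity
\[
\mu((A_1 \cup A_2) \cap \bar gB) + \mu((A_1 \cap A_2) \cap \bar gB) = \mu(A_1 \cap \bar gB) + \mu(A_2 \cap \bar gB)
\]
holds pointwise in $\bar g$ whenever $A_1 \cup A_2$ is convex (all four intersections with $\bar gB$ are then convex bodies), and integrates to the valuation property of $F_B$. Translation invariance follows from the change of variable $\bar g \mapsto \bar g \cdot (e,v)$ in $\overline G = G \ltimes V$; $G$-invariance follows from $\mu(gA \cap \bar gB) = \mu(A \cap g^{-1}\bar gB)$ together with left-invariance of Haar measure on $\overline G$ under the compact subgroup $G$. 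Hausdorff continuity is a dominated-convergence argument, using that the integrand is supported on a compact set of $\bar g$, that $\mu$ is bounded on any Hausdorff-bounded family of bodies, and that $A' \cap \bar gB$ is Hausdorff-continuous in $A'$ for all but a measure-zero set of $\bar g$.

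By Alesker's theorem $\dim \valg(V) = N < \infty$, so for each $B$ there are unique scalars $a_i(B)$ with $F_B = \sum_i a_i(B)\phi_i$. To identify these as valuations in $B$, I would choose convex bodies $K_1,\dots,K_N$ such that the matrix $(\phi_i(K_j))_{i,j}$ is invertible --- possible because the $\phi_i$ are linearly independent as functionals on $\K(V)$ --- and invert the system $F_B(K_j) = \sum_i a_i(B)\phi_i(K_j)$ to express each $a_i(B)$ as a fixed linear combination of the integrals $B \mapsto \int_{\overline G}\mu(K_j \cap \bar gB)\,d\bar g$. Repeating the first step with the roles of the two arguments swapped (using unimodularity of $\overline G$ to realign the $\bar g$-variable) shows that each such integral defines an element of $\valg(V)$; hence $a_i \in \valg(V)$, and expanding $a_i = \sum_j c^\mu_{ij}\phi_j$ in the given basis yields the desired bilinear formula.

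The main obstacle is the Hausdorff continuity of $F_B$: the pointwise integrand $\mu(A \cap \bar gB)$ can jump in $A$ precisely at those $\bar g$ where the intersection $A \cap \bar gB$ changes dimension or is merely a boundary contact. Showing that these exceptional configurations form a set of $\bar g$ of Haar measure zero (for generic $A,B$) and combining this with the intrinsic boundedness of continuous valuations on bounded families of convex bodies gives the dominated-convergence bound. The remaining verifications are formal consequences of the group structure and the invariance of $\mu$.
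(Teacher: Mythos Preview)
The paper does not give its own proof of this statement; it is stated in the introduction as a known consequence of Alesker's finite-dimensionality theorem, with references to \cite{ale03b} and (in \S2.4) to \cite{befu06} for the existence of the map $k_G$. Your argument is precisely the standard one underlying that remark and is correct: fix one argument, use $\dim\valg<\infty$ to expand in the basis, then choose test bodies $K_j$ with $(\phi_i(K_j))$ invertible to exhibit each coefficient as an element of $\valg$ in the other argument, invoking unimodularity of $\overline G$ to swap the roles of $A$ and $B$. The continuity of $F_B$, which you correctly isolate as the only substantive point, is indeed handled by dominated convergence once one knows that the exceptional set of $\bar g$ has measure zero; an alternative is to integrate first over the translation subgroup $V$ (the resulting translative integral is a continuous valuation in $A$ by classical results) and then over the compact group $G$.
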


Moreover, in \cite{ale03b} Alesker gave an explicit basis (in fact two of them) for the space $\valun(\C^n)$ of unitary-invariant valuations on $\C^n$. Although this in itself gives a lot of information about the kinematic formulas, a complete determination of the formulas using the template method appears intractable (although H. Park \cite{pa02} used it successfully in the cases $n= 2,3$).

Meanwhile, H. Tasaki \cite{tasaki00, tasaki03}, building on previous work of R. Howard \cite{howard93}, established a more detailed description of the unitary kinematic formula, which he stated in the restricted  case where $A,B$ are compact submanifolds of complementary dimension. He showed first of all that if $k\le n$ then the unitary orbits of the (real) dimension $k$ (resp. codimension $k$) Grassmannian $\Gr_k(\C^n)$ (resp. $\Gr_{2n-k}(\C^n)$) are naturally parametrized by the $p:=\lfloor \frac k 2\rfloor$-simplex
$\{(\theta_1,\dots,\theta_{p}): 0\le \theta_1\le \dots\le \theta_{p}\le \frac \pi 2\}$. Put $\Theta(E)$ for this ``multiple K\"ahler angle" of $E \in \Gr_{k \text{ or } 2n-k}(\C^n)$, and $\cos^2 \Theta(E)$ for the vector with components $\cos^2\theta_i$. Tasaki's theorem may then be restated as follows.
\begin{Theorem}[Tasaki \cite{tasaki03}] Given $k\le n$, there is a symmetric $(p+1)\times( p+1)$ matrix $T= T^n_k$ such that whenever $A^k, B^{2n-k}\subset \C^n$ are $C^1$ submanifolds of dimension and codimension $k$ respectively,
\begin{equation}
\int_{\overline{U(n)}} \#(A \cap \bar g B) \, d\bar g = \sum_{ij} T_{ij} \int_A \sigma_i (\cos^2 \Theta(T_x A)) \, dx \int_B \sigma_j (\cos^2 \Theta(T_y B)) \, dy
\end{equation}
where $\sigma_i$ is the $i$th elementary symmetric function and $\overline{U(n)}= U(n) \ltimes \C^n$ is the affine unitary group.
\end{Theorem}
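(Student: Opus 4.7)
The plan is to derive Tasaki's theorem from Alesker's factorized kinematic formula \eqref{preliminary kf} together with a convenient identification of bases for the homogeneous components of $\valun(\C^n)$. I would first specialize \eqref{preliminary kf} to $\mu = \chi$. For $C^1$ submanifolds $A^k, B^{2n-k} \subset \C^n$ and almost every $\bar g \in \overline{U(n)}$, the intersection $A \cap \bar gB$ is a transverse finite point set, so $\chi(A \cap \bar gB) = \#(A \cap \bar gB)$. Subject to the technical caveat that Alesker's formula (or rather its valuations) must first be extended from convex bodies to $C^1$ submanifolds, this immediately yields, for any homogeneous basis $\{\phi_i\}$ of $\valun(\C^n)$,
\[
\int_{\overline{U(n)}} \#(A \cap \bar gB)\, d\bar g = \sum_{ij} c^\chi_{ij}\, \phi_i(A)\phi_j(B).
\]

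A homogeneous valuation of degree $d$ vanishes on submanifolds of dimension $<d$ and, on submanifolds of matching dimension, integrates its Klain function against the induced volume measure. Hence only $\phi_i$ of degree $k$ survive on $A$ and only $\phi_j$ of degree $2n-k$ on $B$; by Alesker's count each of the corresponding subspaces has dimension $p+1$. The formula above therefore collapses to
\[
\int_{\overline{U(n)}} \#(A \cap \bar gB)\, d\bar g = \sum_{i,j=0}^{p} T_{ij}\, \int_A f_i(T_xA)\, dx \int_B g_j(T_yB)\, dy,
\]
where $f_i, g_j$ are the Klain functions of the chosen bases of $\valun_k$ and $\valun_{2n-k}$; by Tasaki's classification of unitary orbits these descend to functions on the $p$-simplex of multiple Kähler angles.

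The crux of the argument, and the principal obstacle, is to show that one can arrange $f_i = g_i = \sigma_i(\cos^2 \Theta)$; equivalently, to construct valuations $\tau_i^k \in \valun_k$ and $\tau_j^{2n-k} \in \valun_{2n-k}$ with these prescribed Klain functions. Since the $p+1$ polynomials $\sigma_0(\cos^2\Theta), \dots, \sigma_p(\cos^2\Theta)$ in $p$ variables are linearly independent, a dimension count reduces the task to exhibiting each as the Klain function of some $U(n)$-invariant valuation. I would do this by computing the Klain functions of Alesker's explicit $\{U_{k,q}\}$ basis on unitary orbits in $\Gr_k(\C^n)$ and then inverting the resulting transition matrix; the $\tau_i^k$ are then defined by this inversion. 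This is the most laborious step, since it requires explicit integration on planar $k$-flats of prescribed multiple Kähler angle.

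Finally, the symmetry $T_{ij} = T_{ji}$ follows from $\#(A \cap \bar gB) = \#(B \cap \bar g^{-1}A)$ and unimodularity of Haar measure on $\overline{U(n)}$, combined with the identification of the orbit spaces of $\Gr_k(\C^n)$ and $\Gr_{2n-k}(\C^n)$ under orthogonal complementation that is already built into Tasaki's simultaneous parametrization of both by the single $p$-simplex.
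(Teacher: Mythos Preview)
Your proposal is correct in outline and follows the same broad strategy as the paper: specialize the general $\overline{U(n)}$-kinematic formula to $\mu=\chi$, pass to $C^1$ submanifolds via the Crofton-type interpretation \eqref{poincare formula}, and observe that the bidegree constraint $i+j=2n$ together with vanishing of degree-$d$ valuations on submanifolds of dimension $<d$ collapses the sum to the $(p+1)\times(p+1)$ block.

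The genuine difference lies in how you propose to realize the elementary symmetric functions $\sigma_i(\cos^2\Theta)$ as Klain functions. You plan to compute the Klain functions of Alesker's $U_{k,q}$ on general multiple-K\"ahler-angle orbits and invert the resulting transition matrix; you rightly flag this as the laborious step. The paper avoids this computation entirely by introducing the hermitian intrinsic volumes $\mu_{k,q}$ via explicit constant-coefficient forms $\theta_{k,q}$ on $T\C^n$ (Theorem~\ref{thm_herm_int_vols}), whose Klain functions satisfy $\kl_{\mu_{k,q}}(E^{k,p})=\delta^p_q$ by a one-line evaluation. The Tasaki valuations are then the triangular combinations $\tau_{k,q}=\sum_{i\ge q}\binom{i}{q}\mu_{k,i}$ (Prop.~\ref{tasaki vals}), and no integration over $U_{k,q}$ is needed. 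Your route would work but is substantially heavier; the paper's $\mu_{k,q}$ basis is designed precisely to make this step trivial.

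For the symmetry $T_{ij}=T_{ji}$, your argument via $\#(A\cap\bar gB)=\#(B\cap\bar g^{-1}A)$ and unimodularity is valid and elementary. The paper instead deduces it from the symmetry of the pairing $\langle\phi,\psi\rangle=(\phi,\widehat\psi)$, which amounts to the same thing once one notes that the degree-$(2n-k)$ basis is the Fourier transform of the degree-$k$ Tasaki basis.
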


As Tasaki noted, this formula also holds verbatim if $\C^n$ is replaced by either of the complex space forms $\C P^n, \C H^n$, with their full groups of isometries. This is an instance of the {\it transfer principle}, which we discuss in section \ref{transfer principle section} below.

\subsection{Results of the present paper} In the pages to follow we bring more of the algebraic machinery introduced by Alesker  to bear on the problem of the integral geometry of the unitary group. The key underlying observation (Theorem \ref{thm:abstract} below) is that the graded multiplication introduced in \cite{ale04} on the space of convex valuations is intimately related to the various $G$-kinematic formulas. This illuminates even the classical $SO(n)$ theory, explaining the result of Nijenhuis cited above (cf. \cite{fu06}, section 2.3, and also \cite{fu06b}). 
Our point of entry is the determination in \cite{fu06} of the multiplicative structure of $\valun(\C^n)$.
Here we give a more or less complete set of answers to the questions posed in section 4 of \cite{fu06}.
We now describe our present results as they relate to those questions, in the order given there.

\begin{enumerate}
\item {\bf Explicit kinematic formulas for $U(n)$.}
The paper \cite{fu06} posed the problem of computing the kinematic formulas explicitly in terms of the {\bf monomial basis} (cf. section \ref{monomial basis}). This boils down to computing the inverses $Q^n_k$ of certain symmetric matrices $P^n_k$. It turns out that the $P^n_k$ are Hankel matrices with ascending entries of the form $\binom {2i} i$. Thus the expansion of the inverse as a polynomial in the matrix entries gives some kind of answer to this question, but it seems unreasonably complicated. It would be interesting to have a closed form. 

In the present paper we take a different approach, showing how to determine completely the unitary kinematic formulas (cf. Theorem \ref{pkf}, Corollary \ref{tasaki entries} and section \ref{kinematic formulas}) in terms of the {\bf Tasaki basis} (cf. Prop. \ref{tasaki vals} below) for $\valun$, obtaining in this way the {\bf Tasaki matrices} $T^n_k$, which may be obtained in principle by a change of basis from the $Q^n_k$. Although the formulas remain complicated, they are an order of magnitude less so than the na\"ive formulas for $Q^n_k$ described above. Using this approach we can show, for instance, that the $Q^n_k$ are positive definite (Corollary \ref{+ def cor}), answering another question of \cite{fu06}.
Furthermore the Tasaki valuations are more amenable to calculation in concrete geometric situations.
Strictly speaking we carry this out in full detail only for the principal kinematic formula $k_{U(n)}(\chi)$ (cf. \eqref{def k G} below), then show how the general formulas may be computed in an essentially straightforward way.

Among the many special bases for $\valun$ (cf. the next item) the Tasaki valuations seem to enjoy a privileged status. For example if $k=2p$ is even then, in addition to the usual diagonal symmetry  $ \left(T^n_k\right)_{ij}= \left(T^n_k\right)_{ji}$, the Tasaki matrices $T^n_{2p}$ display the unexpected antidiagonal symmetry
$ \left(T^n_{2p}\right)_{ij}= \left(T^n_{2p}\right)_{p-i,p-j}$ (Theorem \ref{palindrome thm}).

\item {\bf Canonical bases and their interrelations.} We explore with varying degrees of depth several canonical bases for $\valun(\C^n)$: the monomial basis and its Fourier transform, the hermitian intrinsic volumes  $\mu_{k,q}$ (which correspond in a natural way with certain differential forms on the tangent bundle of $\C^n$), their ``Crofton duals" $\nu_{k,q}$, and the Tasaki valuations $\tau_{k,q}$ and their Fourier transforms $\widehat{\tau_{k,q}}$. Although we explicitly study their interrelations only to the extent necessary to answer our other concerns, there is enough information here to give a complete (though again complicated) dictionary among them.

\item {\bf Special cones.} We show that the cone $P$ of nonnegative elements of $\valun$ is generated by the hermitian intrinsic volumes. Stimulated by the fact (due to Kazarnovskii) that the ``Kazarnovskii pseudo-volume" $\mu_{n,0}$ is at once nonnegative and non-monotone, we give a complete characterization of the cone $M$ of monotone elements of $\valun$.

\end{enumerate}

As a concluding general remark, we have taken care to give precise and complete values whenever possible. Beyond the obvious motive of providing solid information for possible applications, we mean to make the point (in the only way possible)  that this algebraic approach is sufficient to formulate these results in complete detail, in an area historically plagued by statements of the form ``There exists a formula such that...."

In the latter respect, however, things are not yet in a satisfactory state. Some results are given in terms of sums for which we have not found closed forms. Whether or not such closed forms exist, their nature suggests that there might exist some combinatorial model that generates them, perhaps something like the devices that occur in Schubert calculus. Indeed much of the approach in the following pages is inspired by the principle that the algebras $\valg (V)$ are similar to the cohomology algebras of K\"ahler manifolds--- it is even the case that the main subject of this paper, $\valun(\C^n)$, has the same Betti numbers as the even-degree cohomology of the Grassmann manifold of complex $2$-planes in $\C^{n+2}$, although the algebras themselves are not isomorphic.

\subsubsection{Acknowledgements} We wish to thank  Semyon Alesker, Ludwig Br\"ocker, Dan Nakano, Jason Parsley, Ted Shifrin and Robert Varley for helpful discussions, and the Universities of Georgia (USA) and Fribourg (Switzerland) for hosting our mutual visits as we worked out this material. We thank also the anonymous referee, whose many useful remarks greatly improved the text at key points, and who in particular suggested the proof of Theorem \ref{monotone homogeneous} in the non-smooth case. 


\section{Valuations and curvature measures} 
Throughout most of this section we let $V$ be an oriented euclidean vector space of dimension $n<\infty$. We note, however, that for much of the discussion the euclidean assumption is not strictly necessary if we substitute the dual space $V^*$ for $V$ in appropriate spots.

We put 
$$ \omega_k:= \frac{\pi^{\frac k 2}}{\Gamma(\frac k 2 +1)}$$
for the volume of the $k$-dimensional euclidean ball. In particular
$$\omega_{2l} = \frac{\pi^l}{l!},$$
which also happens to be the volume of the complex projective space $\C P^l$ under the Fubini-Study metric.

\subsection{Basics}
For definiteness we will work formally in the context of convex valuations on $V$. However, many statements apply also to other geometrically valid subsets (e.g. $C^2$ submanifolds, or in the case of the Crofton formulas even $C^1$ submanifolds) of smooth manifolds, in terms of the formalism of valuations on manifolds \cite{ale05a, ale05b,alefu05,ale05d,ale06}. Since these notions intervene only at the stage of interpretation of our main results, and never in an essential technical way, we will say no more about them.

We put $\K =\K(V)$ for the space of all compact convex subsets of $V$, endowed with the Hausdorff metric, and $\Ksm(V)\subset \K(V)$ for the subspace consisting of subsets with nonempty interior and smooth boundary, and for which all principal curvatures are nonzero. We refer to \cite{befu06} and the sources cited there for the definition and basic properties of the vector space $\Val = \Val(V)$ of continuous translation-invariant valuations on $V$, and of the dense subspace $\Val^{sm}(V)$ of smooth valuations.  Basic examples of these objects include the Euler characteristic $\chi$ and the volume measure $\vol_n$.

Recall that a valuation $\phi$ is of {\bf degree} $k$ if $\phi(tK)=t^k\phi(K)$ for all $t \geq 0$ and {\bf even} if $\phi(-K) = \phi(K)$ for all $K\in \K$. The corresponding subspace of $\Val$ is denoted by $\Val_k^+$. 
It is known \cite{kl00} that the restriction of an even valuation $\mu$ of degree $k$ to a $k$-dimensional subspace $E \subset V$ is a multiple of the restriction of the usual Hausdorff measure $\vol_k$ to $E$. Putting $\kl_{\mu}(E)$ for the proportionality factor, we obtain the {\bf Klain function} $\kl_\mu \in C(\Gr_k(V))$ of $\mu$. In other words,  $\kl_\mu$ is uniquely characterized by the relation
\begin{equation}
\mu(K) = \kl_\mu(E)\vol_k(K) \text{ for } E \in \Gr_k, \quad K \in \K( E).
\end{equation}
A theorem of Klain \cite{kl00} states that the resulting map $\kl$ from the space of even valuations of degree $k$ to $C(\Gr_k(V))$ is injective. 
%

Every even $\mu \in \valsm_k(V)$ admits a {\bf Crofton measure}, i.e. a signed measure $m$ on $\Gr_{k}(V)$ such that 
$$\mu (A) = \int_{\Gr_{k}(V)} \vol_k(\pi_E(A))\, dm(E), $$
where $\pi_E$ is the orthogonal projection to $E$. This follows from the Alesker-Bernstein theorem \cite{ale-be} (compare also Section 1 in [1]).

We recall also Alesker's {\bf Fourier transform} $\F:\valsm_k(V) \to\valsm_{n-k}(V) $ (cf. \cite{ale07}). 
In the present paper we will denote the Fourier transform of a valuation $\phi $ by 
\begin{equation}
\widehat \phi:= \F \phi.
\end{equation}
We will only make use of it for even valuations, in which case it is uniquely characterized in terms of the Klain embedding by
\begin{equation}
\kl_{\widehat \phi}(E)= \kl_\phi (E^\perp), \quad E \in \Gr_{n-k}(V)
\end{equation}
for even $\phi\in \valsm_k$.
In this form, the Alesker-Fourier transform was denoted by $\D$ in \cite{ale03b}, \cite{befu06} and in several other papers. 

Alesker has defined in \cite{ale04} a commutative graded product on $\valsm(V)$, with the property that the symmetric pairing 
\begin{equation}\label{pd pairing}
(\phi,\psi):= \text{ degree $n$ part of }\phi\cdot \psi
\end{equation}
 is perfect. 
 We recall \cite{befu06} that the related pairing 
\begin{equation}\label{fourier pairing}
\langle\phi,\psi\rangle:= (\phi,\fourier \psi)
\end{equation}
is symmetric. 
We will see later that the restriction of the pairing $\langle \cdot,\cdot \rangle$ to $\valun$ is positive definite. However this is not true of the unrestricted pairing--- it is shown in \cite{be07} that if $n$ is odd then the index of the restriction of the pairing to $ \Val^{SU(n)}(\C^n)$ is 1.

\subsection{Grassmannians} We denote the Grassmannian of $k$-dimensional subspaces of the real vector space $V$ by $\Gr_k(V)$.
If $V =\C^n$ (considered as a {\it real} vector space) we consider the {\bf $(k,p)$-Grassmannian} $\Gr_{k,p}(\C^n)\subset \Gr_k(\C^n)$ to be the submanifold of all $k$-dimensional real subspaces that may be expressed as the orthogonal direct sum of a $p$-dimensional complex subspace and a $(k-2p)$-dimensional real subspace that is isotropic with respect to the standard symplectic (K\"ahler) structure on $\C^n$. A general element of $\Gr_{k,p}$ will be denoted $E^{k,p}$. It is easy to see that $\Gr_{k,p}$ is the orbit of $\C^p \oplus \R^{k-2p}$ under the standard action of $U(n)$. In particular, $\Gr_{2p,p}$ is the Grassmannian of $p$-dimensional complex subspaces and $\Gr_{n,0}(\C^n)$ is the Lagrangian Grassmannian.

\subsection{Global and local} \label{globloc} We recall that the family of algebras $\valun:= \valun(\C^n)$ is related by the sequence of surjective restriction homomorphisms $\valun \to \Val^{U(n-1)}, n\ge 1$. The algebra $\valuinf$ of {\bf global valuations} is the inverse limit of this system; abusing terminology we will identify a global valuation with its images in the various $\valun$. An expression for an element of $\valun$ that does not hold in $\valuinf$ will be called {\bf local}, or {\bf local at $n$}. Likewise we will refer to global and local relations among valuations.

\subsection{Poincar\'e duality and kinematic formulas} We recall from \cite{ale03b} that if $G\subset O(V)$ is a compact group acting transitively on the sphere of $V$ then the space $\valg(V)$ of $G$-invariant and translation invariant valuations on $V$ is finite dimensional. It follows (cf. \cite{befu06}) that there is a linear injection $k_G: \valg\to\valg \otimes \valg$ such that whenever $A,B \in \K$ and $\phi \in \valg$
\begin{equation}\label{def k G}
k_G(\phi)(A,B)=\int_{\overline G} \phi(A\cap \bar g B)\, d\bar g .
\end{equation}
Here $\overline G:= G\ltimes V$ is the group generated by $G$ and the translation group of $V$ and $d\bar g$ is the Haar measure, normalized so that
\begin{equation}\label{standard convention}
d\bar g\left(\{\bar g: \bar g o \in S\}\right) = \vol_n(S), \quad S\subset V \text{ measurable,}
\end{equation}
where $o \in V$ is an arbitrarily chosen point.
 If $\phi \in\valg_k$, then $
k_G(\phi) \in \bigoplus_{i+j = n+k} \valg_i \otimes \valg_j$. Taking $A$ to be a point, it is clear that the term of $k_G(\chi)$ of bidegree $(0,n)$ is $\chi \otimes \vol_n$.

 The algebraic approach to the kinematic formula is based on the following statement from \cite{befu06}.  Let $p: \valg\to {\valg}^*$ denote the linear isomorphism induced by the Poincar\'e duality pairing \eqref{pd pairing},  $m_G:\valg\otimes\valg \to \valg$ the restriction of the multiplication map to $\valg$, and $m_G^*:{\valg}^*\to {\valg}^* \otimes{\valg}^*$ its adjoint.

 \begin{Theorem}\label{thm:abstract}
\begin{equation}\label{eq:abstract}(p\otimes p )\circ k_G = m_G^* \circ p\end{equation}
\end{Theorem}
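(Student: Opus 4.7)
The desired equality $(p\otimes p)\circ k_G = m_G^*\circ p$ is, after pairing both sides with $\psi_1\otimes\psi_2 \in \valg\otimes\valg$, the scalar statement
\begin{equation*}
\sum_i (\alpha_i,\psi_1)(\beta_i,\psi_2) \;=\; (\phi,\psi_1\cdot\psi_2) \qquad (\phi,\psi_1,\psi_2 \in \valg),
\end{equation*}
where $k_G(\phi) = \sum_i \alpha_i\otimes \beta_i$. The plan is to verify this by rewriting both sides as the same iterated Crofton integral over the affine Grassmannian.

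The first move is to reduce to the case where $\psi_1,\psi_2$ are intersection-type Crofton valuations $\psi_{m_j}(K) = \int_{\AGr} \chi(K\cap \overline F_j)\,dm_j(\overline F_j)$. This reduction is allowed on the even part of $\valg$ by the Alesker--Bernstein theorem \cite{ale-be} invoked in the excerpt (with a parallel generalized-Crofton device for the odd component, and not needed in the main $G=U(n)$ case since there $-I\in G$ forces evenness), together with the fact that $\valg \subset \valsm$ is finite dimensional so both sides depend continuously on the test valuations.

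Next, on the right-hand side, invoke the product-with-Crofton formula
\begin{equation*}
(\mu\cdot\psi_m)(K) \;=\; \int_{\AGr}\mu(K\cap \overline F)\,dm(\overline F),\qquad \mu\in\valsm,
\end{equation*}
iterated twice, and extract the $\vol_n$-density to realize $(\phi,\psi_{m_1}\cdot \psi_{m_2})$ as a constant multiple of $\int\!\int \phi(\overline F_1\cap \overline F_2)\,dm_1\,dm_2$ in the truncated/limit sense. Treat the left-hand side identically: each factor $(\alpha_i,\psi_{m_j})$ unfolds by the same formula into an integral of $\alpha_i$ (resp.\ $\beta_i$) over the affine Grassmannian, and interchanging summation with integration yields
\begin{equation*}
\sum_i (\alpha_i,\psi_{m_1})(\beta_i,\psi_{m_2}) \;=\; \int\!\int \sum_i \alpha_i(\overline F_1)\beta_i(\overline F_2)\,dm_1\,dm_2.
\end{equation*}
The defining kinematic identity $\sum_i \alpha_i(A)\beta_i(B) = \int_{\overline G}\phi(A\cap \bar g B)\,d\bar g$ applied with $A=\overline F_1$, $B=\overline F_2$ (suitably truncated), together with the $\overline G$-invariance of the Crofton measures $m_j$ and Fubini (swapping $d\bar g$ past $dm_1\,dm_2$), collapses the $\overline G$-integration to the same iterated integral $\int\!\int \phi(\overline F_1\cap\overline F_2)\,dm_1\,dm_2$, giving the claimed equality.

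The principal obstacle is analytic: the flats $\overline F_j$ are non-compact, so $\phi(\overline F_1\cap\overline F_2)$ and $\alpha_i(\overline F_1)$ make sense only through truncation by a large ball and passage to a limit. Controlling this limit uniformly in the Crofton parameters $\overline F_j$, and justifying the swap of $d\bar g$ past $dm_1\,dm_2$ after truncation, is the technical heart of the argument; once in place, the matched iterated-integral expressions force the identity.
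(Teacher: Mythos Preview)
The paper does not actually prove this theorem: it is quoted verbatim from \cite{befu06} (``the following statement from \cite{befu06}''), with no argument given here. So there is no in-paper proof to compare against; your sketch is a standalone attempt at the result of \cite{befu06}.

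That said, your outline follows the same strategy as the original source: reduce the operator identity to a scalar identity by pairing with $\psi_1\otimes\psi_2$, represent the $\psi_j\in\valg$ by (invariant) Crofton measures on affine Grassmannians, use the product/Crofton formula $(\mu\cdot\psi_m)(K)=\int_{\AGr}\mu(K\cap\overline F)\,dm(\overline F)$ iteratively to unfold both sides into matching iterated integrals, and collapse the $\overline G$-integration using invariance. Two points deserve more care than you give them. First, the reduction to Crofton-type test valuations: for general $G$ the odd part of $\valg$ is not covered by ordinary Crofton measures on $\AGr$, and the ``parallel generalized-Crofton device'' you allude to is doing real work there (in \cite{befu06} this is handled by working with the convolution side and/or with smooth valuations represented by forms, not by an odd Crofton formula). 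Second, the truncation issue you flag is genuine and is where most of the effort lies; the argument in \cite{befu06} avoids literal truncation of unbounded flats by working throughout with compact convex bodies $A,B$ and using continuity/density and the normalization \eqref{standard convention} to identify the $\vol_n$-coefficient, rather than evaluating on flats directly. Your endpoint is correct, but as written the proposal underplays how the passage from ``kinematic identity on compact bodies'' to ``iterated integral over affine flats'' is made rigorous.
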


To state this in more sensible terms:

\begin{Theorem}\label{g-kinematic}  Let $\phi_1,\dots,\phi_N$ and $\psi_1,\dots,\psi_N$ be bases of $\Val^G$, and let $M$ be the $N \times N$ matrix 
$$ M_{ij} := (\phi_i,\psi_j),$$
where the right hand side is given by the Poincar\'e duality pairing \eqref{pd pairing}. Let $K:= M^{-1}$. Then 
\begin{equation}
k_G(\chi) = \sum_{i,j} K_{ij}\, \phi_i \otimes \psi_j.
\end{equation}
If the $\psi_i = \fourier{\phi_i}$ then $M$ and $K$ are symmetric.
More generally, for any $\mu \in \valg$,
\begin{equation}\label{general kf}
k_G(\mu) = \sum_{i,j} K_{ij}\, (\mu\cdot\phi_i) \otimes \psi_j = \sum_{i,j} K_{ij}\, \phi_i \otimes (\mu\cdot\psi_j).
\end{equation}
\end{Theorem}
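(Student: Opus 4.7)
My plan is to derive both formulas directly from Theorem \ref{thm:abstract} together with one elementary observation about the Poincar\'e pairing. First apply the identity $(p\otimes p)\circ k_G=m_G^*\circ p$ to the multiplicative unit $\chi$. The right hand side is transparent: since $\chi$ is the unit of the Alesker algebra, $m_G^*(p(\chi))$ is the functional on $\valg\otimes\valg$ sending $\alpha\otimes\beta$ to $(\chi,\alpha\cdot\beta)=(\alpha,\beta)$; evaluated on the basis vector $\phi_k\otimes\psi_l$, this yields $M_{kl}$.

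For the left hand side, expand $k_G(\chi)=\sum_{i,j} c_{ij}\,\phi_i\otimes\psi_j$ in the given basis. Then $(p\otimes p)(k_G(\chi))$ evaluated on an arbitrary $\alpha\otimes\beta$ equals $\sum_{i,j} c_{ij}(\phi_i,\alpha)(\psi_j,\beta)$. Equating this with $(\alpha,\beta)$ for all $\alpha,\beta$ and specializing at basis vectors produces the matrix equation $MCM=M$ (with appropriate transpose conventions), whose unique solution is $C=M^{-1}=K$. This proves the principal formula. In the special case $\psi_i=\widehat{\phi_i}$, the symmetry of $M$ (and hence of $K$) is immediate from the symmetry of the Fourier pairing \eqref{fourier pairing}, since $M_{ij}=(\phi_i,\widehat{\phi_j})=\la\phi_i,\phi_j\ra$.

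For the second statement, the key remark is that multiplication by any $\mu\in\valg$ is self-adjoint with respect to the Poincar\'e pairing:
\begin{equation*}
(\mu\cdot\alpha,\beta)\;=\;(\mu,\alpha\cdot\beta)\;=\;(\alpha,\mu\cdot\beta),
\end{equation*}
as follows from commutativity and associativity of the Alesker product. Combining this identity with Theorem \ref{thm:abstract} applied to $\mu$ gives $k_G(\mu)=(L_\mu\otimes\mathrm{id})\bigl(k_G(\chi)\bigr)$, where $L_\mu$ denotes the multiplication operator. Substituting the just-established formula for $k_G(\chi)$ yields the first expression in \eqref{general kf}. The second expression then follows by the manifest symmetry of $k_G$ in its two arguments (a consequence of the change of variable $\bar g\mapsto \bar g^{-1}$ in the Haar integral) or, equivalently, by applying $(\mathrm{id}\otimes L_\mu)$ to $k_G(\chi)$ instead.

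The main obstacle is essentially bookkeeping: keeping track of transpose conventions for $M$ and $K$, since $\phi_i$ and $\psi_j$ are a priori distinct bases so $M$ need not be symmetric in general. The genuine content is already packaged in Theorem \ref{thm:abstract} together with the self-adjointness of $L_\mu$, and the rest is linear algebra.
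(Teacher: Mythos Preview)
Your argument is correct and follows exactly the route the paper intends: Theorem~\ref{g-kinematic} is presented there simply as a restatement of Theorem~\ref{thm:abstract} ``in more sensible terms,'' with the only explicit remark being that the symmetry assertion is the symmetry of the Fourier pairing~\eqref{fourier pairing}. You have supplied the linear algebra that the paper leaves implicit, including the key observation that $L_\mu$ is self-adjoint for the Poincar\'e pairing, which is precisely what makes~\eqref{general kf} follow from the case $\mu=\chi$; your caveat about transpose conventions is also apt, since the formula is applied in the paper only in situations where $M$ is symmetric.
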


The symmetry assertion is of course the same as the symmetry of the pairing \eqref{fourier pairing}.

These formulas also apply to other types of geometric subsets of $V$, as described in \cite{fu90}, \cite{howard93}, \cite{fu94}. The simplest case occurs when $A,B$ are smooth compact submanifolds of complementary dimensions $k,n-k$. It is advantageous to use bases for $\valg$ comprised of bases for the components $\valg_k$ of the grading by degree. Given an even valuation $\phi \in \valg_k$, and a compact $C^1$ $k$-dimensional submanifold $A\subset V$, it is natural to put
$$ 
\phi(A):= \int_A \kl_\phi(T_xA) \, dx$$
and the kinematic formula yields the Crofton formula
\begin{equation}\label{poincare formula}
\int_{\overline G} \#(A \cap \bar gB) \, d\bar g = \sum_{\deg \phi_i = k,\deg \psi_j = n-k}K_{ij}\, \phi_i(A)\psi_j(B),
\end{equation}
where $\#$ denotes the cardinality.

\subsection{The transfer principle for Crofton formulas}\label{transfer principle section} R. Howard has established a general Crofton formula for Riemannian homogeneous spaces $M:=G/K$. Put $\overline{\Gr_m}(M)$ for the dimension $m$ Grassmann bundle over $M$.

\begin{Theorem}[\cite{howard93}] Let $G$ be a unimodular Lie group and $M:=G/K$ a Riemannian homogeneous space of $G$, and let $m+n \ge \dim M$.  Let the Haar measure on $M$ be given by \eqref{standard convention}. Then there exists a nonnegative function $f_{M,G,K}\in C^\infty( \overline{\Gr_m}(M) \times \overline{\Gr_n}(M)) $, invariant under the action of $G\times G$ on $\overline{\Gr_m}(M) \times \overline{\Gr_n}(M)$, such that if $A^m, B^n\subset M$ are $C^1$ submanifolds then
\begin{equation}\label{howard formula}
\int_G \vol_{\dim M- m-n}(A\cap gB)\, dg = \int\int_{A \times B} f_{M,G,K}(T_xA, T_yB)\, dx \, dy.
\end{equation}
\end{Theorem}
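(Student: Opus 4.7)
The plan is to apply Federer's coarea formula to a double fibration of an incidence manifold, as in the classical derivation of the Poincar\'e formula. Introduce
\[
N := \{(g, x, y) \in G \times A \times B : gy = x\},
\]
which, by the transitivity of the $G$-action on $M$, may be identified with $\phi^{-1}(A)$ for the submersion $\phi : G \times B \to M,\ (g,y) \mapsto gy$, and is therefore a smooth submanifold of $G \times A \times B$ of dimension $\dim G + m + n - \dim M$. Equip $N$ with the Riemannian metric induced from the product of a Haar-compatible metric on $G$ with the metrics on $A$ and $B$ inherited from $M$. The two natural projections $\pi_1 : N \to G$ and $\pi_2 : N \to A \times B$ have fibers identified, respectively, with $A \cap gB$ (via $x \mapsto (g, x, g^{-1}x)$) and with the coset $\{g \in G : gy = x\}$ of $\mathrm{Stab}(y) \cong K$.

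First I would apply the coarea formula to $\pi_1$. A local computation in an orthonormal frame of $T_x M$ adapted to $T_x A$ and $g_* T_y B$ shows that the normal Jacobian of $\pi_1$ is a function $\alpha(T_x A, g_* T_y B)$ of the two tangent subspaces alone, by $G$-invariance of the metric on $M$. Combined with the corresponding identification of the fiber measure with the intrinsic volume measure on $A \cap gB$ (up to a related angle factor), this rewrites $\int_G \vol(A \cap gB)\, dg$ as an integral over $N$ of an explicit tangent-space function $\gamma(T_x A, g_* T_y B)$. Next I would apply the coarea formula to $\pi_2$: its normal Jacobian is similarly a tangent-space function $\beta(T_x A, g_* T_y B)$, and the $\pi_2$-fiber measure is a translate of Haar measure on $K$. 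Unfolding and comparing yields an identity of the form
\[
\int_G \vol(A \cap gB)\, dg = \int_{A \times B} \left( \int_K \frac{\gamma}{\beta}\, dk \right) d\vol_{A \times B}(x, y).
\]

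The crucial observation is that the inner $K$-integrand depends on $(g, x, y) \in N$ only through the pair of subspaces $(T_x A, g_* T_y B)$ in $T_x M$, and as $g$ ranges over the $\pi_2$-fiber at $(x, y)$ the second subspace sweeps out precisely the $K$-orbit of $T_y B$ (after transport by any fixed $g_0$ with $g_0 y = x$). The inner integral thus depends only on the pair $(T_x A, T_y B) \in \overline{\Gr_m}(M) \times \overline{\Gr_n}(M)$, defining the desired function $f_{M,G,K}$. Nonnegativity and smoothness are immediate from its presentation as a fiber integral of a smooth nonnegative Jacobian ratio, while $G \times G$-invariance follows from the manifest invariance of the left side under $(A, B) \mapsto (g_1 A, g_2 B)$, a consequence of the unimodularity of $G$ combined with the left-invariance of Haar measure. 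The main obstacle is the precise bookkeeping of the two coarea Jacobians and the Haar normalization on $K$, to guarantee that no spurious constant appears; this reduces to a Cauchy--Binet-type identity comparing ``angles'' between subspaces of complementary or excess dimension in $T_x M$, together with the standard check that the transversality of $A$ and $gB$ is generic in $g$.
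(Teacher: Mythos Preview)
The paper does not give a proof of this theorem at all: it is quoted verbatim from Howard's memoir \cite{howard93} and stated without argument (only the subsequent Transfer Principle receives a short heuristic proof). So there is no ``paper's own proof'' to compare against.

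That said, your sketch is essentially Howard's original argument: the incidence manifold $N=\{(g,x,y):gy=x\}$ with its two projections, two applications of the coarea formula, and the observation that the resulting integrand factors through the $K\times K$-orbit of the pair of tangent spaces. Your use of unimodularity to obtain $G\times G$-invariance of the left-hand side is also the correct mechanism. The only place where your outline is genuinely vague is the ``Cauchy--Binet-type identity'' relating the two normal Jacobians; in Howard's treatment this is made precise by writing everything in terms of the function $\sigma(E,F):=\|e_1\wedge\cdots\wedge e_m\wedge f_1\wedge\cdots\wedge f_n\|$ (or its appropriate generalization when $m+n>\dim M$), and one has to be careful that the Riemannian metric chosen on $G$ is compatible with the Haar normalization and with the quotient metric on $M=G/K$. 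If you want to turn your sketch into a complete proof, that normalization step is where the work lies; the rest is sound.
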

Note that the function $f_{M,G,K}$ is completely determined by its restriction $\bar f_{M,G,K}$to $\Gr_m(T_oM) \times \Gr_n(T_oM)$, where $o=[K]\in M$ is a representative point, and that this restriction is $K\times K$ invariant. Under this correspondence, the function $f_{M,G,K}$ is in a certain sense universal:

\begin{Theorem}[Transfer principle \cite{howard93}]\label{transfer principle} 
Suppose $G'$ is another unimodular Lie group containing $K$, and $M'= G'/K$ an associated Riemannian homogeneous space, such that for representative points $o\in M, o'\in M'$ there exists an isometric $K$-map $T_oM \to T_{o'}M'$. If we identify these two spaces via this map, then $\bar f_{M,G,K}= \bar f_{M',G',K}$.
\end{Theorem}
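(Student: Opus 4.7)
The plan is to extract from Howard's formula \eqref{howard formula} an explicit local formula for $\bar f_{M,G,K}(E,F)$ depending only on the common inner-product space $W := T_oM \cong T_{o'}M'$, the $K$-action on $W$, and the Haar measure on $K$. Since these data coincide in the two settings under the given $K$-isometric identification, the corresponding formulas for $\bar f_{M,G,K}$ and $\bar f_{M',G',K}$ must agree.

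To extract such a formula, fix $E \in \Gr_m(W)$ and $F \in \Gr_n(W)$, and let $D_E \subset E$, $D_F \subset F$ be the closed unit disks. Apply \eqref{howard formula} to the test submanifolds $A_\epsilon := \exp_o(\epsilon D_E)$ and $B_\delta := \exp_o(\delta D_F)$. By continuity of $f_{M,G,K}$, together with the convergence $T_xA_\epsilon \to E$, $T_yB_\delta \to F$ as $\epsilon, \delta \to 0$ (modulo parallel transport), the right-hand side expands as
$$\iint_{A_\epsilon \times B_\delta} f(T_xA_\epsilon, T_yB_\delta)\, dx \, dy = \bar f_{M,G,K}(E,F)\, \omega_m \omega_n\, \epsilon^m \delta^n + o(\epsilon^m \delta^n).$$

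For the left-hand side, decompose Haar measure along the principal $K$-fibration $\pi: G \to M$ as $dg = d\vol_M(g \cdot o) \, d\mu_K(\kappa)$, with $\mu_K$ the induced Haar measure on $K$, normalized consistently with \eqref{standard convention}. On a small neighborhood $U$ of $o$, choose a measurable section $p \mapsto h_p \in G$ with $h_p \cdot o = p$ such that $(dh_p)_o \colon W \to T_pM$ is parallel transport along the minimizing geodesic from $o$ to $p$. A compactness argument shows $A_\epsilon \cap h_p \kappa B_\delta = \emptyset$ whenever $p \notin U$ and $\epsilon, \delta$ are small. For $p \in U$, Riemannian normal coordinates at $p$ identify both $A_\epsilon$ and $h_p \kappa B_\delta$ with small flat disks in $W$ whose tangent planes differ from $E$ and $\kappa \cdot F$ by $O(\epsilon + \delta + \mathrm{dist}(o,p))$. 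The leading $r$-volume ($r := \dim M - m - n$) of their intersection therefore reduces to the classical Euclidean kinematic density on $W$, and integrating over $p \in U$ and $\kappa \in K$ yields
$$\int_G \vol_r(A_\epsilon \cap g B_\delta)\, dg = \Phi_W(E,F)\, \omega_m \omega_n\, \epsilon^m \delta^n + o(\epsilon^m \delta^n),$$
where $\Phi_W(E,F)$ is an explicit quantity built only from the inner product on $W$, the $K$-action on $W$, and $\mu_K$.

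Matching leading-order coefficients gives $\bar f_{M,G,K}(E,F) = \Phi_W(E,F)$. The identical argument applied in $(M',G',K)$, together with the $K$-isometric identification $W \cong W'$, delivers $\bar f_{M',G',K}(E,F) = \Phi_W(E,F)$, proving the theorem. The main technical obstacle is the uniform control, in $\kappa \in K$ and $p \in U$, of the Riemannian curvature corrections to the flat intersection-volume asymptotic: one needs the $o(\epsilon^m \delta^n)$ error to be genuinely uniform so that the leading coefficient survives integration against $d\mu_K \, d\vol_M$. This is a standard smoothness estimate for $C^1$ submanifolds in a smooth Riemannian manifold, but deserves careful verification since the argument pins down a leading coefficient rather than merely an order of magnitude.
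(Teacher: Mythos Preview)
Your proposal is correct and follows essentially the same approach as the paper: both reduce $\bar f_{M,G,K}$ to the flat model $\bar f_{V,K\ltimes V,K}$ by localizing to infinitesimal linear pieces of $A$ and $B$ at a point. The paper in fact gives only a brief ``Heuristic proof'' via Riemann sums, so your shrinking-disk argument with explicit leading-order asymptotics is a more detailed execution of the same idea, and your identification of the uniform error control as the one point needing care is exactly right.
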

\begin{proof}[Heuristic proof] Given $A^m,B^n \subset M$, we may think of $A,B$ as being made up of infinitesimal pieces of linear elements $E \in \Gr_m(V), F\in \Gr_n(V)$, where $V:=T_oM \simeq T_xM$ for any $x\in M$. Taking Riemann sums, it follows that $\bar f_{M,G,K} = \bar f_{V, K\ltimes V,K}$.
\end{proof}

\subsection{The normal cycle, curvature measures and the first variation of a valuation}  \label{curv measures} Let $S(V)$ denote the unit sphere of $V$ and set $SV:=V \times S(V)$, the sphere bundle over $V$. Given a smooth translation-invariant form $\beta\in \Omega^{n-1}(SV)^V$ we define $\Psi_{\beta}\in\valsm(V)$ by
\begin{equation}\label{psi map}
 \Psi_{\beta}(A) :=  \int_{N(A)}{\beta}.
 \end{equation}
for $A \in \K(V)$, where $N(A)$ is the normal cycle of $A$. Conversely, any element of $\valsm(V)$ may be expressed as $c \vol_n + \Psi_{\beta}$ for some constant $c$ and some ${\beta}$ as above. This was proved by Alesker \cite{ale05a}, Thm. 5.2.1.

The map $\Psi:\Omega^{n-1}(SV)^V \to \valsm(V)$ from forms to valuations may be factored through the {\bf curvature measure map} $\Phi$ as follows. The {\bf curvature measure} $\Phi_{\beta}$ is defined to be the assignment to any $A\in \K(V)$, of a signed measure supported on  $\partial A$ given by
 $$ \Phi^A_{\beta}(S) :=\int_ {\pi^{-1}(S)\cap N(A)}  \beta$$
 for measurable subsets $S \subset V$, where $\pi:SV \to V$ is the projection.
Thus $\Psi_{\beta}(A)= \Phi^A_{\beta}(A)$. 
We say that the curvature measure  $\Phi_{\beta}$ is {\bf nonnegative}
 if the measure $\Phi_{\beta}^A \ge 0$ for all $A \in \K(V)$.
We observe that if the boundary of $A$ is a smooth hypersurface then the last integral may be expressed as the integral over $S$ of a function, determined by $\beta$, which at each point $x\in \partial A$ is polynomial in the second fundamental form of $\partial A$ at $x$ (cf. Lemma \ref{2nd ff} below).

Recall that $SV$ is a contact manifold with the global contact form $\alpha$ defined by $\alpha|_{(x,v)}(w)=\langle v,d\pi(w)\rangle$. The unique vector field $T$ on $SV$ with $i_T\alpha=1, \mathcal{L}_T\alpha=0$ is called the {\bf Reeb vector field} (here $\mathcal L$ denotes the Lie derivative). Given a form $\beta \in \Omega^{n-1}(SV)$ there exists a unique vertical form $\alpha \wedge \xi$ such that $d(\beta+\alpha \wedge \xi)$ is vertical, i.e. is a multiple of $\alpha$. The {\bf Rumin operator} $D$, introduced in \cite{rumin}, is the second order differential operator $D\beta:=d(\beta+\alpha \wedge \xi)$. 

Consider now the first variation of a valuation $\mu \in \valsm(V)$: given $A\in \K^{sm}$ and a smooth vector field $\xi$ on $V$, we put $$\delta_\xi \mu (A): = \left.\frac{d}{dt}\right|_{t=0} \mu(F_t(A))$$
where $F_t$ is the flow of $\xi$. The following implies that $\delta_\xi\mu$ extends by continuity to a smooth (but not translation-invariant) valuation in the sense of \cite{ale05a, ale05b,ale05d,ale06,alefu05} (although we will not make use of this fact).

\begin{Lemma} \label{1st variation}If $\mu = \Psi_{\beta}$ then
\begin{equation}
\delta_\xi \mu (A) = \int_{N(A)} \langle \xi,\pi_* T\rangle\, i_T(D\beta).
\end{equation}
\end{Lemma}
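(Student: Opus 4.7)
My plan is to differentiate $t\mapsto \mu(F_tA)$ by transferring the integral over the moving normal cycle $N(F_tA)$ back to a fixed integral over $N(A)$ via a canonical lift of the flow $F_t$ to the sphere bundle $SV$, and then to exploit both the Legendrian nature of $N(A)$ and the verticality of $D\beta$ to extract the stated formula.

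As a preliminary step I replace $\beta$ by $\tilde\beta:=\beta+\alpha\wedge\eta$, where $\eta$ is the unique vertical form with the property that $d\tilde\beta=D\beta$ is vertical, so that we may write $D\beta=\alpha\wedge\rho$ for some form $\rho$. Since the contact form $\alpha$ pulls back to zero on every normal cycle, $\Psi_{\tilde\beta}=\Psi_\beta=\mu$, and we may use $\tilde\beta$ in the integral representation of $\mu$. Next, the diffeomorphism $F_t$ lifts canonically to a one-parameter family $\tilde F_t\colon SV\to SV$ (transporting the outer unit normal along the flow) characterised by $N(F_tA)=(\tilde F_t)_*N(A)$ and $\pi\circ\tilde F_t=F_t\circ\pi$; its infinitesimal generator $\tilde\xi$ therefore satisfies $d\pi(\tilde\xi)=\xi\circ\pi$.

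The core computation is then straightforward. Change of variables gives
\[
\mu(F_tA)=\int_{N(F_tA)}\tilde\beta=\int_{N(A)}\tilde F_t^*\tilde\beta;
\]
differentiating at $t=0$, applying Cartan's formula $\mathcal L_{\tilde\xi}\tilde\beta=i_{\tilde\xi}\,d\tilde\beta+d\,i_{\tilde\xi}\tilde\beta$, and using Stokes (the normal cycle is closed) to discard the exact term yields
\[
\delta_\xi\mu(A)=\int_{N(A)}i_{\tilde\xi}(D\beta).
\]
To finish, I use $D\beta=\alpha\wedge\rho$ to expand $i_{\tilde\xi}(D\beta)=(i_{\tilde\xi}\alpha)\,\rho-\alpha\wedge i_{\tilde\xi}\rho$, which pulled back to $N(A)$ reduces to $(i_{\tilde\xi}\alpha)|_{N(A)}\cdot\rho|_{N(A)}$ since $\alpha|_{N(A)}=0$. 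The first factor equals $\langle v,\xi(x)\rangle=\langle\xi,\pi_*T\rangle$ by the definition of $\alpha$ and the lift property $d\pi(\tilde\xi)=\xi$, while the same verticality argument applied to $T$ gives $i_T(D\beta)|_{N(A)}=\rho|_{N(A)}$, yielding the claimed identity.

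The main obstacle will be the geometric input that $F_t$ genuinely admits a smooth lift $\tilde F_t$ with $N(F_tA)=(\tilde F_t)_*N(A)$, so that the differentiation under the integral is legitimate. For $A\in\Ksm$ this is unproblematic, since $N(A)$ is a smoothly embedded hypersurface in $SV$ and $F_t(\partial A)$ inherits a smoothly varying outer normal field; the general convex case, if needed, would follow by approximation using the continuity of the normal cycle in the flat topology.
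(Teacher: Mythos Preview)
Your proof is correct and follows essentially the same route as the paper's: lift the flow to the sphere bundle, differentiate via the Lie derivative and Cartan's formula, discard the exact term by Stokes, and reduce using $\alpha|_{N(A)}=0$. Your preliminary replacement of $\beta$ by $\tilde\beta=\beta+\alpha\wedge\eta$ (so that $d\tilde\beta=D\beta$ directly) is a mild but pleasant simplification over the paper, which instead works with $\beta$ and uses that the complete lift $\tilde F_t$ is a contact transformation to pass from $\mathcal L_{\tilde\xi}\beta$ to $\alpha(\tilde\xi)\, i_T D\beta$.
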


Since $\partial A$ is smooth this may be rephrased as

\begin{Corollary}\label{variation corollary} Suppose $A\in \K^{sm}(V)$, and let $n$ be the outward pointing normal field to $\partial A$. Then
$$
\delta_\xi\Psi_{\beta}(A) = \int_{\partial A} \langle \xi,n\rangle \, d \Phi^A_{i_T(D\beta)}
$$
\end{Corollary}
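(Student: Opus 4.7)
The plan is to lift the flow $F_t$ on $V$ to a one-parameter family of contact transformations $\tilde F_t$ of $SV$ such that $N(F_tA) = \tilde F_t(N(A))$. Explicitly, at $(x,v) \in SV$, $\tilde F_t(x,v) = (F_t(x), w_t)$ with $w_t$ the unit vector in the direction of $((dF_t)_x^{-1})^T v$; this is the standard lift of a diffeomorphism to the cosphere bundle and preserves the normal cycle construction. (For $A \in \Ksm$ this is immediate, and the general case follows by continuity of the normal cycle.) Writing $\tilde \xi$ for the infinitesimal generator (which projects to $\xi$ under $\pi_*$), we have
$$\mu(F_tA) = \int_{N(F_tA)} \beta = \int_{N(A)} \tilde F_t^* \beta,$$
and differentiation at $t=0$ gives $\delta_\xi \mu(A) = \int_{N(A)} \mathcal L_{\tilde \xi} \beta$.

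Next I would exploit the two fundamental properties of $N(A)$: it is a closed current ($\partial N(A) = 0$) and it is Legendrian ($\alpha$ pulls back to zero on it). The Legendrian property gives $\int_{N(A)} \alpha \wedge \eta = 0$ for any form $\eta$, so we may replace $\beta$ by $\beta' := \beta + \alpha \wedge \xi_0$, where $\xi_0$ is the vertical form occurring in the definition of $D$, so that by construction $D\beta = d\beta'$ is vertical. Cartan's formula together with Stokes' theorem then yield
$$\delta_\xi \mu(A) = \int_{N(A)} \mathcal L_{\tilde \xi}\beta' = \int_{N(A)} i_{\tilde \xi}\,d\beta' = \int_{N(A)} i_{\tilde \xi}(D\beta).$$

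To recognize the right-hand side as the asserted integral, I would write $D\beta = \alpha \wedge \gamma$ for some $(n-1)$-form $\gamma$ (using verticality) and expand
$$i_{\tilde \xi}(D\beta) = \alpha(\tilde \xi)\,\gamma - \alpha \wedge i_{\tilde \xi}\gamma.$$
The second summand restricts to zero on $N(A)$. An analogous expansion of $i_T(D\beta) = (i_T\alpha)\gamma - \alpha \wedge i_T\gamma = \gamma - \alpha \wedge i_T\gamma$ shows that $i_T(D\beta)$ restricts on $N(A)$ to $\gamma$. Finally, from $\alpha|_{(x,v)}(w) = \langle v, d\pi(w)\rangle$ one checks directly that the Reeb field satisfies $\pi_* T|_{(x,v)} = v$, whence
$$\alpha(\tilde \xi)|_{(x,v)} = \langle v, \pi_*\tilde \xi\rangle = \langle \xi(x), v\rangle = \langle \xi, \pi_* T\rangle|_{(x,v)}.$$
Assembling these identifications produces the stated formula.

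The main technical point requiring care is the identity $N(F_tA) = \tilde F_t(N(A))$ at the level of currents when $A$ is not smooth; for $A \in \Ksm$ it is routine, and the general case is handled by the standard continuity of the normal cycle. Everything else is a formal manipulation driven by verticality of $D\beta$ and Legendricity of $N(A)$.
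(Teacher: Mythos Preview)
Your argument is correct and follows essentially the same route as the paper: lift the flow to contact transformations, differentiate to get a Lie derivative over $N(A)$, and then use the Rumin form together with the Legendrian/closed properties of $N(A)$ to reduce to $\alpha(\tilde\xi)\,i_T(D\beta)$. You have in fact written out the intermediate steps (replacing $\beta$ by $\beta+\alpha\wedge\xi_0$, Cartan plus Stokes, the decomposition $D\beta=\alpha\wedge\gamma$) that the paper compresses into a single line. One small remark: the Corollary assumes $A\in\Ksm$, so your closing discussion of the non-smooth case and continuity of the normal cycle is unnecessary here; the passage from $\int_{N(A)}\langle\xi,\pi_*T\rangle\,i_T(D\beta)$ to $\int_{\partial A}\langle\xi,n\rangle\,d\Phi^A_{i_T(D\beta)}$ is just the observation that for smooth $A$ the normal cycle is the graph of the Gauss map over $\partial A$.
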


\begin{proof}[Proof of Lemma \ref{1st variation}] Let $\tilde \xi$ denote the complete lift of $\xi $ to $SV$, i.e. the vector field whose flow $\tilde F_t$ consists of contact transformations and which covers $F_t$ (\cite{yaish73}). Put $A_t:= F_t(A)$. Then $N(A_t) = \tilde F_{t*}(N(A))$,  whence
\begin{align*}
\delta_\xi \Psi_{\beta}(A) &= \left.\frac{d}{dt}\right|_{t=0} \left(\int_{N(A_t) }\beta \right)\\
&= \int_{N(A)}\mathcal L_{\tilde \xi}\beta \\
&= \int_{N(A)} \alpha(\tilde \xi)\, i_T D\beta \\
&= \int_{N(A)} \langle\xi,\pi_*T\rangle\, i_T\left(  D\beta\right),
\end{align*}
as claimed.
\end{proof}

The kernel of the map $\Psi $ of \eqref{psi map} has been characterized in \cite{bebr07}. This result may be restated in the vector space setting as follows. Define the map $\delta$ from $\valsm$ to the space of curvature measures by
\begin{equation}\label{eq: def delta}
\delta(\Psi_{\beta}):= \Phi_{i_T (D\beta)}, \quad \delta(\vol_n) := P, 
\end{equation}
where $P^K(S) = \vol_{n-1}(S\cap \partial K)$ for $K \in \Ksm$ and $S \subset V$ measurable. 
We recall that $\mu \in \Val(V)$ is said to be {\bf monotone} if $\mu(K)\le \mu(L)$ whenever $K\subset L, K,L \in \K(V)$.

\begin{Theorem} \label{1st variation map}\label{thm_monotone_general}
The mapping 
$\delta$
is well-defined, with kernel equal to the one-dimensional subspace spanned by the Euler characteristic $\chi$. A valuation $\mu\in \valsm(V)$ is monotone if and only if 
$\delta \mu  \ge 0$ and $\mu(\{point\}) \ge 0$.
\end{Theorem}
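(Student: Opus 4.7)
The plan is to handle the three assertions in sequence. For well-definedness of $\delta$, the main task is to verify (i) that $\Psi_\beta=\Psi_{\beta'}$ implies $\Phi_{i_T(D\beta)}=\Phi_{i_T(D\beta')}$, and (ii) that the prescription $\delta(\vol_n)=P$ is consistent with any expression $\vol_n=\Psi_\gamma$. For (i) I would invoke the characterization of $\ker\Psi$ from \cite{bebr07}, which ensures that the ambiguity in a form $\beta$ representing a given valuation $\Psi_\beta$ does not affect the curvature measure $\Phi_{i_T(D\beta)}$. For (ii) a direct calculation in local coordinates on $SV=V\times S(V)$ suffices: one may take $\gamma=\pi^*\eta$ with $d\eta=dV$, in which case $d\gamma=\pi^*dV$ is already vertical of the form $\alpha\wedge\omega$, where the $(n-1)$-form $\omega$ restricted to any $N(A)$, $A\in\Ksm$, recovers the standard $(n-1)$-volume element on $\partial A$. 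Hence $D\gamma=\alpha\wedge\omega$ and $i_T(D\gamma)\equiv\omega\pmod{\alpha}$, yielding $\Phi^A_{i_T(D\gamma)}=P^A$.

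For the kernel, the inclusion $\R\chi\subset\ker\delta$ is classical: writing $\chi=\Psi_\beta$ for the Gauss--Bonnet--Chern form $\beta$, one has $D\beta=0$, so $\delta\chi=0$. Conversely, suppose $\delta\mu=0$. Corollary \ref{variation corollary} (together with the analogous identity $\delta_\xi\vol_n(A)=\int_{\partial A}\langle\xi,n\rangle\,dP^A$) yields $\delta_\xi\mu(A)=\int_{\partial A}\langle\xi,n\rangle\,d(\delta\mu)^A=0$ for every $A\in\Ksm$ and every smooth vector field $\xi$ on $V$. Thus $\mu$ is constant along every smooth family $\{F_t(A)\}\subset\Ksm$ arising as the flow of a vector field. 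Since $\Ksm$ is smoothly path-connected via linear interpolation of support functions (convex combinations of smooth strictly convex support functions are again smooth and strictly convex), $\mu$ is constant on $\Ksm$, and by density and continuity of $\mu$ also on $\K$. Decomposing $\mu=\sum_{k=0}^n\mu_k$ into homogeneous components and using $\mu(tA)=\mu(A)$ for $t>0$, the polynomial identity $\sum_k t^k\mu_k(A)=\mu(A)$ forces $\mu_k\equiv 0$ for $k\ge 1$; hence $\mu=\mu_0\in\R\chi$.

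For the monotonicity characterization, the forward direction is nearly immediate: $\mu(\{\mathrm{pt}\})\ge\mu(\emptyset)=0$, and for $A\in\Ksm$ and nonnegative $f\in C^\infty(\partial A)$, extending $f$ to the normal component $\langle\xi,n\rangle$ of a smooth vector field $\xi$ on $V$ and flowing for small $t\ge 0$ keeps $F_t(A)$ in $\Ksm$ with $F_t(A)\supset A$; monotonicity together with Corollary \ref{variation corollary} then gives $\int_{\partial A}f\,d(\delta\mu)^A\ge 0$ for every such $f$, hence $(\delta\mu)^A\ge 0$. Conversely, given $A\subset B$ in $\K$, I would approximate by $A^\epsilon\subset B^\epsilon$ in $\Ksm$ and linearly interpolate support functions $h_t=(1-t)h_{A^\epsilon}+th_{B^\epsilon}$ to obtain a smooth monotonically increasing family $\{A_t\}\subset\Ksm$ from $A^\epsilon$ to $B^\epsilon$ whose instantaneous deformation is outward-pointing at each $\partial A_t$ (because $\dot h_t=h_{B^\epsilon}-h_{A^\epsilon}\ge 0$); Corollary \ref{variation corollary} and $\delta\mu\ge 0$ then yield $\frac{d}{dt}\mu(A_t)\ge 0$, so $\mu(A^\epsilon)\le\mu(B^\epsilon)$, and letting $\epsilon\to 0$ finishes by continuity of $\mu$. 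The hypothesis $\mu(\{\mathrm{pt}\})\ge 0$ is needed to pin down the one degree of freedom invisible to $\delta$, namely the $\R\chi$-component of $\mu$. The main obstacle I anticipate is the well-definedness step, which hinges on the Bernig--Br\"ocker description of $\ker\Psi$ dovetailing with the $i_T\circ D$ construction; the technical content of the converse monotonicity direction---maintaining convexity of the interpolated family and correctly identifying its velocity field as outward-pointing---is routine once the first-variation formalism is in place.
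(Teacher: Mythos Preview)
Your argument is correct and broadly parallels the paper's, with two small deviations worth noting. For well-definedness, the paper bypasses the Bernig--Br\"ocker kernel characterization and simply observes that Lemma~\ref{1st variation} already shows the curvature measure $\Phi_{i_T(D\beta)}$ (on smooth bodies, hence on all of $\K$ by continuity of normal cycles) is determined by the valuation $\Psi_\beta$ itself via its first variations; your step (ii) is in fact unnecessary, since a translation-invariant $(n-1)$-form on $SV$ produces a valuation of degree at most $n-1$, so $\vol_n\notin\operatorname{im}\Psi$ and the decomposition $c\vol_n+\Psi_\beta$ is already unique (note also that your $\pi^*\eta$ is not translation-invariant, so it does not fit the framework of \eqref{psi map} anyway). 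For the kernel, the paper uses a single vector field---the Euler field $\xi=-\sum x_i\,\partial/\partial x_i$ generating homothety toward the origin---to contract any $K$ to a point and conclude $\mu(K)=\mu(\{0\})$ directly by continuity; your path-connectedness argument via support-function interpolation also works but is less economical, and the subsequent homogeneity decomposition is then redundant. The monotonicity part, including the use of linear interpolation of support functions for the converse direction, matches the paper's proof essentially verbatim.
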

\begin{proof} That $\delta$ is well-defined follows from Lemma \ref{1st variation}. Corollary \ref{variation corollary} implies that if $\mu \in \ker \delta$ then $\delta_\xi \mu \equiv 0$ for all smooth vector fields $\xi$. Taking $\xi := -\sum x_i \frac{\partial}{\partial x_i}$ to be the Euler vector field generating the homothetic flow towards the origin, continuity implies that $\mu(K) =  \mu (\{0\})=:c$ for all $K \in \K$. It follows that $\mu = c\chi$.

To prove the last assertion, by continuity of $\mu$ it is enough to show that $\mu$ is monotone iff $\mu(\{point\})\ge 0$ and $\delta \mu^K \ge 0$ for all $K \in \K^{sm}$.

Suppose $\mu$ is monotone and $K \in K^{sm}$. Then $\mu(\{point\}) \ge 0$ since $\mu (\emptyset ) = 0$. Furthermore, if $f: \partial K \to \R$ is smooth and $\ge 0$ then by Corollary \ref{variation corollary} $0\le \delta_{fn}\mu(K) = \int_{\partial K}f\, d(\delta\mu)^K$. This implies that $(\delta \mu)^K \ge 0$, as claimed.

To prove the converse, it is enough to show that if $K,L \in \K^{sm}$ and $K\supset L$ then $\mu(K) \ge \mu(L)$. Under these conditions there is a smooth deformation $F_t:V \to V$ such that $F_0=Id$, $F_1(L) = K$ and $\langle\frac{\partial F_t}{\partial t}(t),n\rangle \ge 0$ for all outward normals $n$ to $F_t(L)$ (for example, the deformation arising from the linear interpolation between the support functions of the two bodies). Integrating the result of Corollary \ref{variation corollary} completes the proof.
\end{proof}

\subsection{Constant coefficient valuations.}\label{const coeff} 
If ${\beta}$ extends to a translation-invariant form $\overline {\beta}\in\Omega^{n-1}(TV)^V \simeq\Omega^{n-1}(V \times V)^V$, then Stokes' theorem gives
$$ \int_{N(A)} {\beta} = \int_{N_1(A)} d\overline {\beta},$$
where $N_1(A)$ is the ``disk bundle" defined in (41) of \cite{befu06}. We consider here the case where $\phi := d\overline{\beta}$ has constant coefficients, i.e. $\phi \in \Lambda^n(V\oplus V)$.

This subject is relevant here for two related reasons. First, it turns out (cf. Theorem \ref{thm_herm_int_vols} below) that all unitary-invariant valuations belong to this class. Second, constant coefficient valuations are important even in the general theory of valuations: from \eqref{eq: def delta}, we know that the first variation $\delta \mu$ of any valuation $\mu$ on $V^n$ corresponds to a translation-invariant differential form $\gamma$ of degree $n-1$ on the sphere bundle $SV$, which is a contact manifold. At each point $(x,v)\in SV$, the contact hyperplane $Q_{x,v}$ may be naturally identified with $P_v\oplus P_v$, where $P_v:= v^\perp$. Thus if we fix $(x,v)$ and restrict $\gamma_{x,v}$ to $Q_{x,v}$, we obtain an element of $\Lambda^{n-1} (P_v\oplus P_v)$. We may now regard $\gamma_{x,v}$ as giving a constant coefficient valuation on the vector space $P_v$. It turns out that the positivity of this family of ``infinitesimal"  constant coefficient valuations (parametrized by $(x,v)\in SV$) is equivalent to the monotonicity (in the sense defined in the remarks preceding Theorem  \ref{thm_monotone_general} above) of $\mu$. This has the following consequence: in view of the fact (Corollary \ref{pos_ccv}) that a constant coefficient valuation is positive iff its homogeneous components are positive, a general translation-invariant valuation is monotone iff its homogeneous components are monotone (Thm. \ref{monotone homogeneous}).

Strictly speaking, the positivity of the constant coefficient valuation determined by $\gamma_{x,v}$ is not the relevant concern for the monotonicity question--- instead, the matter turns on the positivity of the functional on symmetric bilinear forms defined in equation \eqref{lambda functional}. However, Lemma \ref{tfae +} and and Prop. \ref{nu + lambda +} show that these two conditions are equivalent. This is a help when we want to determine the monotone cone in the space of $U(n)$-invariant valuations: the family of infinitesimal constant coefficient valuations that arise in calculating their first variations may be expressed in terms of the invariant valuations in dimension $n-1$. Thus the determination of the (invariant) positive cone translates at once into a criterion (Prop. \ref{+ curvature measures}) for the monotone cone.

Put $\Sigma$ for the vector space of self-adjoint linear maps $V\to V$. We identify $\Sigma$ in the usual way with the space of symmetric bilinear forms on $V$. Given $\phi\in \Lambda^n(V \oplus V)$, consider the map $\lambda_\phi:\Sigma \to \R$ given by
\begin{equation}\label{lambda functional}
\lambda_\phi (\sigma) := \overline\sigma^*\phi,
\end{equation}
where $\overline \sigma (v ):= (v,\sigma v)$ is the graphing map, and we identify $\Lambda^n V$ with $\R$ by $t \cdot \vol\simeq t $. 
Given a euclidean space $W$ of dimension $n+1$, together with $A\in \Ksm(W)$ and $\beta \in \Omega(SW)^W$, it is convenient to express the curvature measure $\Phi_\beta^A$ in these terms by taking $V:= T_x \partial A$, where $x \in \partial A$. Let $n:\partial A \to S(W)$ denote the Gauss map and $\sigma_x:V \to V$ the Weingarten map. As above, the contact hyperplane $Q_{x,n(x)}$ is naturally identified with $V \oplus V$, and (after restriction) $\beta_{x,n(x)} \in \Lambda^n(V\oplus V)$. The following is immediate.
\begin{Lemma}\label{2nd ff} Let $\beta \in \Omega(SV)^V$. For $A\in \Ksm$ the curvature measure determined by $\beta$ may be expressed as the curvature integral
\begin{equation}
\Phi_\beta^A(S) = \int_{S\cap \partial A} \lambda_{\beta_{x,n(x)}}( \sigma_x) \, dx.
\end{equation}
$\square$
\end{Lemma}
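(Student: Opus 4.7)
The plan is to unwind the definition of $\Phi_\beta^A$ by writing the normal cycle as the graph of the Gauss map and performing a pointwise change of variables. Since $A \in \Ksm$, the normal cycle $N(A)$ is (as an oriented rectifiable current) the image of $\partial A$ under the Gauss graph map $\gamma : \partial A \to SV$, $\gamma(x) := (x, n(x))$. By the very definition of the curvature measure,
$$\Phi_\beta^A(S) \;=\; \int_{\pi^{-1}(S)\cap N(A)} \beta \;=\; \int_{S\cap\partial A} \gamma^*\beta.$$
It therefore suffices to identify $\gamma^*\beta|_x$ with $\lambda_{\beta_{x,n(x)}}(\sigma_x)$ times the induced volume form on $\partial A$ at each $x \in \partial A$.

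Next I compute $d\gamma_x$. For $w \in T_x\partial A$ one has $d\gamma_x(w) = (w, dn_x(w))$, and by convention the Weingarten endomorphism $\sigma_x$ of $T_x\partial A = n(x)^\perp =: V$ equals $dn_x$. Using the canonical identification of the contact hyperplane $Q_{x,n(x)} \simeq P_{n(x)} \oplus P_{n(x)} = V \oplus V$ recalled in section \ref{const coeff}, this gives $d\gamma_x(w) = (w, \sigma_x w) = \overline{\sigma_x}(w)$. Because $\gamma$ factors through the Legendrian submanifold $N(A)$, the differential $d\gamma_x$ annihilates the contact form, so only the restriction of $\beta_{x,n(x)}$ to $Q_{x,n(x)}$ contributes to the pullback. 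Applying the definition \eqref{lambda functional} of $\lambda_\phi$, and noting that the identification $\Lambda^n V \simeq \R$ used there is precisely the one given by the induced volume form on $T_x\partial A$, one obtains
$$\gamma^*\beta|_x \;=\; \overline{\sigma_x}^*\bigl(\beta_{x,n(x)}\bigr) \;=\; \lambda_{\beta_{x,n(x)}}(\sigma_x)\, d\mathrm{vol}_{\partial A}|_x.$$
Integrating over $S\cap\partial A$ yields the stated formula.

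The only real obstacle is bookkeeping: one must make precise, and mutually compatible, the identifications of $T_x\partial A$ with the fiber $V = n(x)^\perp$ appearing in the definition of $\lambda_\phi$, of the contact hyperplane $Q_{x,n(x)}$ with $V\oplus V$, and of $\Lambda^n V$ with $\R$ via the volume form. Once these choices (which also determine the sign convention for $\sigma_x$) are pinned down, the lemma reduces to the Gauss graph representation of the normal cycle plus a direct pullback computation. No further analytic input is needed, since $A\in\Ksm$ guarantees the smoothness of $\gamma$ and the non-degeneracy of $\sigma_x$.
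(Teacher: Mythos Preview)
Your proof is correct and is precisely the natural unwinding of definitions that the paper has in mind: the paper does not give a proof at all, stating only that ``The following is immediate'' and terminating the lemma with a $\square$. Your argument---parametrize $N(A)$ by the Gauss graph $\gamma(x)=(x,n(x))$, observe that $d\gamma_x=\overline{\sigma_x}$ under the identification $Q_{x,n(x)}\simeq P_{n(x)}\oplus P_{n(x)}$, and pull back---is exactly this immediate verification made explicit. One cosmetic remark: the non-degeneracy of $\sigma_x$ (guaranteed by $A\in\Ksm$) is not actually used in the computation, so you need not invoke it.
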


We say that $\lambda_\phi \ge 0$ if $\lambda_\phi (\sigma) \ge 0$ whenever $\sigma $ is nonnegative semidefinite.
Put $\nu_\phi$ for the valuation
\begin{equation}\label{nu phi}
\nu_\phi(K):= \int_{N_1(K)} \phi.
\end{equation}
Put $\Lambda^n_k(V \oplus V)$ for the space of forms of bidegree $(k,n-k)$ and $\Sigma_k\subset \Sigma$ for the cone of maps of corank $k$.

Observe that if $\phi \in  \Lambda^n_k(V \oplus V)$ then the Klain function $\kl_{\nu_\phi}$ is given as follows. Given $E \in \Gr_k(V)$, let $\bar e_1,\dots,\bar e_n$ be a basis {\bf adapted to $E$}, i.e. an  orthonormal basis for $V$ such that $\bar e_1,\dots,\bar e_k$ span $E$. Put $e_i := (\bar e_i,0), \eps_i:=(0,\bar e_i)$. Then
\begin{equation}\label{klain e}
\kl_{\nu_\phi}(E) = \pm\omega_{n-k}\,\phi(e_1,\dots,e_k,\eps_{k+1},\dots, \eps_n),
\end{equation}
where the sign is positive or negative accordingly as the ordered basis $\bar e_i$ determines the correct orientation of $V$ or not.

\begin{Lemma} \label{tfae +} Suppose $\phi   \in \Lambda_k^n(V \oplus V)$. The following are equivalent:
\begin{enumerate}
\item \label{valuation +}$\nu_\phi \ge 0$.
\item \label{klain +}$\kl_{\nu_\phi} \ge 0$.
\item \label{lambda +}$\lambda_\phi \ge 0 $.
\item \label{lambda k +}$\lambda_\phi(\sigma) \ge 0$ for all $\sigma \in\Sigma_k$ with $\sigma \geq 0$.
\end{enumerate}
\end{Lemma}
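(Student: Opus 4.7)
The plan is to treat condition $(2)$ as a hub: the implication $(1)\Rightarrow(2)$ is immediate from the definition of the Klain function (evaluate $\nu_\phi$ on a $k$-dimensional convex body sitting in an arbitrary $E\in\Gr_k(V)$), and $(3)\Rightarrow(4)$ is just a restriction of the admissible class of $\sigma$. I will then establish $(4)\Rightarrow(2)\Rightarrow(3)$ and $(2)\Rightarrow(1)$ via uniform multilinear-algebra calculations, fixing a positively oriented orthonormal basis $\bar e_1,\ldots,\bar e_n$ of $V$ and writing $e_i=(\bar e_i,0)$, $\epsilon_i=(0,\bar e_i)$ throughout.

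For $(2)\Leftrightarrow(3)\Leftrightarrow(4)$, let $\sigma\ge 0$ be diagonalized by the basis above, with eigenvalues $\mu_1,\ldots,\mu_n\ge 0$. Then $\bar\sigma(\bar e_i)=e_i+\mu_i\epsilon_i$, and because $\phi$ has bidegree $(k,n-k)$ only those terms in the multilinear expansion of $\phi(e_1+\mu_1\epsilon_1,\ldots,e_n+\mu_n\epsilon_n)$ survive in which exactly $k$ slots receive first-factor summands. Indexing by the support $I\subset\{1,\ldots,n\}$, $|I|=k$, of the first factors, and checking that the shuffle signs needed to rewrite the resulting form value in an $E_I$-adapted basis exactly cancel those produced by antisymmetry of $\phi$, one arrives at
\begin{equation}\label{lambda-sum}
\lambda_\phi(\sigma)=\omega_{n-k}^{-1}\sum_{|I|=k}\kl_{\nu_\phi}(E_I)\prod_{j\notin I}\mu_j,\qquad E_I:=\mathrm{span}(\bar e_i:i\in I).
\end{equation}
From \eqref{lambda-sum} the implication $(2)\Rightarrow(3)$ is term-by-term nonnegativity, while $(4)\Rightarrow(2)$ follows by choosing, for each prescribed $E\in\Gr_k(V)$, $\sigma$ to be the orthogonal projection onto $E^\perp$ in a basis adapted to $E$: then all but the single term $I=\{1,\ldots,k\}$ vanishes and leaves $\lambda_\phi(\sigma)=\omega_{n-k}^{-1}\kl_{\nu_\phi}(E)$.

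For $(2)\Rightarrow(1)$ I would first prove $\nu_\phi(K)\ge 0$ for $K\in\Ksm(V)$ and then extend to general $K\in\K(V)$ by continuity of $\nu_\phi$ and density of $\Ksm$. For smooth strictly convex $K$, parameterize $N_1(K)$ by $(x,t)\mapsto(x,tn(x))$, $x\in\partial K$, $t\in[0,1]$. Choosing $\bar e_1,\ldots,\bar e_{n-1}$ to be principal directions of $\partial K$ at $x$, with principal curvatures $\kappa_i\ge 0$ by convexity, and $\bar e_n=n(x)$, the tangent space to $N_1(K)$ at $(x,tn(x))$ is spanned by the vectors $e_i+t\kappa_i\epsilon_i$ for $i\le n-1$ together with $\epsilon_n$. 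The same bidegree-$(k,n-k)$ expansion applies, this time with $\epsilon_n$ mandatorily occupying one of the $n-k$ second-factor slots, and integration $\int_0^1 t^{n-k-1}\,dt=1/(n-k)$ leaves
\begin{equation*}
\nu_\phi(K)=\frac{\omega_{n-k}^{-1}}{n-k}\int_{\partial K}\sum_{\substack{I\subset\{1,\ldots,n-1\}\\ |I|=k}}\kl_{\nu_\phi}(E_I^x)\prod_{j\in\{1,\ldots,n-1\}\setminus I}\kappa_j(x)\,dx,
\end{equation*}
which is nonnegative term by term under $(2)$.

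The chief obstacle I foresee is the sign bookkeeping in \eqref{lambda-sum} and in its curvature-integral analogue: the shuffle signs incurred in rearranging the diagonalizing or principal basis into a basis adapted to each $E_I$, and the orientation convention on $N_1(K)$ inherited from the normal cycle of a convex body. In each instance antisymmetry of $\phi$ supplies a matching sign that cancels, but this must be verified carefully to avoid a spurious minus. The degenerate cases $k=0$ and $k=n$, where $\nu_\phi$ is a multiple of $\chi$ respectively $\vol_n$, each make all four conditions tautologically equivalent and require no separate argument.
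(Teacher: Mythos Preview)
Your proof is correct. The treatment of $(2)\Leftrightarrow(3)\Leftrightarrow(4)$ is essentially the paper's own argument, packaged a bit more explicitly: your identity for $\lambda_\phi(\sigma)$ as $\omega_{n-k}^{-1}\sum_{|I|=k}\kl_{\nu_\phi}(E_I)\prod_{j\notin I}\mu_j$ is exactly the diagonalized polynomial expansion the paper invokes, and your projection $\sigma$ onto $E^\perp$ is the paper's corank-$k$ test map.

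The one genuine divergence is $(2)\Rightarrow(1)$. The paper argues via polytopes: for a compact convex polytope $P$,
\[
\nu_\phi(P)=\sum_{F\in P^k}\kl_{\nu_\phi}(\langle F\rangle)\,\vol_k(F)\,\angle(P,F),
\]
summing over $k$-faces with $\angle(P,F)$ the normalized exterior angle; nonnegativity of $\kl_{\nu_\phi}$ gives $\nu_\phi(P)\ge 0$ term by term, and continuity plus density of polytopes finishes. This bypasses entirely the parametrization of $N_1(K)$ and the orientation bookkeeping you flag as the chief obstacle---no calculus is needed, and the cases $k=0,n$ require no special handling. Your smooth-body route is also valid (the shuffle signs do cancel as you anticipate, and the $K\times\{0\}$ slice of $N_1(K)$ is purely horizontal so contributes nothing for $k<n$), and it has the virtue of reusing the same bidegree expansion throughout; but the polytope argument is shorter and cleaner.
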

\begin{proof}
\eqref{valuation +} $\iff$ \eqref{klain +}: That \eqref{valuation +} $\implies$\eqref{klain +} is obvious. To prove the converse it is enough to observe that if $P\subset V$ is a compact convex polytope then
$$ \nu_\phi(P) =\sum_{F\in P^k} \kl_{\nu_\phi}(\langle F\rangle) \vol_k(F) \angle(P,F)$$
where $P^k$ is the $k$-skeleton of $P$, $\langle F\rangle $ is the $k$-plane spanned by $F$, and $\angle (P,F)$ is the normalized exterior angle of $P$ along $F$.

\eqref{lambda k +} $\iff$ \eqref{lambda +}: That \eqref{lambda +} $\implies$ \eqref{lambda k +} is obvious. To prove the converse, let $\tau \in \Sigma, \tau \ge 0$. We may assume that $\tau $ is diagonal. The restriction of $\lambda_\phi$ to the subspace of diagonal maps $\tau$, with entries $t_1,\dots ,t_n\ge 0$, may be expressed as $\lambda_\phi(\tau) = \sum_{i_1<\dots<i_{n-k}} a_{i_1\dots i_{n-k}} t_{i_1}\dots t_{i_{n-k}} $ for some coefficients $a_{i_1\dots i_{n-k}}$. Setting suitable subsets of the $t_i$ to be zero, the hypothesis implies that all $a_{i_1\dots i_{n-k}}\ge 0$.

 \eqref{lambda k +} $\iff$ \eqref{klain +}: 
 Given $\sigma \in \Sigma_k$, $\sigma \ge 0$, let $\bar e_i$ be a positively oriented basis of $V$ adapted to $E:=\ker \sigma$. Then
 \begin{align*}\lambda_\phi(\sigma) &= \phi(e_1,\dots,e_k, e_{k+1}+a_{k+1}\eps_{k+1},\dots,e_n +a_n\eps_n)\\
&= \phi(e_1,\dots,e_k,(\bar e_{k+1},\sigma \bar e_{k+1}),\dots,(\bar e_n,\sigma \bar e_n))\\
&=\det (\sigma|_{E^\perp})\, \phi(e_1,\dots,e_k, \eps_{k+1},\dots,\eps_n).
 \end{align*}
Since $\sigma\ge 0$, the determinant is nonnegative. Thus both conditions are equivalent to the assertion that the right hand side of \eqref{klain e} is nonnegative on such a basis.
\end{proof}

\begin{Corollary} \label{pos_ccv}
A constant coefficient valuation is positive if and only if its homogeneous components are positive. 
\end{Corollary}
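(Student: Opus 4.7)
The plan is to decompose $\phi$ by bidegree as $\phi = \sum_{k=0}^n \phi_k$ with $\phi_k \in \Lambda_k^n(V \oplus V)$, and to observe that $\nu_\phi = \sum_{k=0}^n \nu_{\phi_k}$ is the McMullen decomposition of $\nu_\phi$ into homogeneous parts: under the homothety $K \mapsto tK$ the disk bundle $N_1(K)$ scales only in the base direction, so $\phi_k$ (linear in $k$ factors from the first $V$) picks up $t^k$, making $\nu_{\phi_k}$ homogeneous of degree $k$. The forward implication is then immediate, so I focus on the converse.

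Assume $\nu_\phi \geq 0$. Fix $k$ and $E \in \Gr_k(V)$, and choose any convex body $K \subset E$ with $\vol_k(K) > 0$. For every $t \geq 0$,
\[
0 \leq \nu_\phi(tK) = \sum_{j=0}^n t^j \, \nu_{\phi_j}(K).
\]
For $j > k$, $\nu_{\phi_j}$ is a translation-invariant continuous valuation of homogeneous degree $j$; restricted to the affine $k$-plane containing $K$ it would be a homogeneous valuation of degree $j > k$ on a $k$-dimensional space, forcing $\nu_{\phi_j}(K) = 0$. The sum therefore truncates to a polynomial in $t$ of degree at most $k$, nonnegative on $[0,\infty)$. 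Dividing by $t^k$ and letting $t \to \infty$ shows its coefficient of $t^k$, namely $\nu_{\phi_k}(K)$, is nonnegative. Since $\vol_k(K) > 0$ and $\nu_{\phi_k}$ restricted to $\K(E)$ is a scalar multiple of $\vol_k$, this yields $\kl_{\nu_{\phi_k}}(E) \geq 0$. As $E \in \Gr_k(V)$ was arbitrary, Lemma \ref{tfae +} (the implication $(2)\Rightarrow(1)$) gives $\nu_{\phi_k} \geq 0$ for each $k$.

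The conceptual point is that one must probe $\nu_\phi$ against low-dimensional bodies: testing only on full-dimensional $K$ yields merely a polynomial nonnegative on $[0,\infty)$, whose individual coefficients need not be nonnegative. The dimensional vanishing $\nu_{\phi_j}(K) = 0$ for $j > \dim K$ (a standard consequence of the fact that a translation-invariant continuous valuation of degree $j$ on an $m$-dimensional space with $j>m$ must vanish) is what truncates the polynomial and lets the standard leading-coefficient argument isolate $\nu_{\phi_k}$ one degree at a time. The remaining passage from Klain-nonnegativity on all of $\Gr_k(V)$ to nonnegativity of $\nu_{\phi_k}$ is supplied by Lemma \ref{tfae +}, whose polytope formula works precisely because $\nu_{\phi_k}$ is of constant-coefficient type.
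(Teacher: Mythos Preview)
Your proof is correct and follows essentially the same approach as the paper's: both restrict to a $k$-dimensional subspace $E$, use that the higher-degree components $\nu_{\phi_j}$ ($j>k$) vanish there, and then scale (you via $tK$ with $t\to\infty$, the paper via $E\cap B_R$ with $R\to\infty$) to isolate $\kl_{\nu_{\phi_k}}(E)$ before invoking Lemma~\ref{tfae +}. The only cosmetic difference is that the paper phrases it contrapositively (pick $E$ with $\kl_{\nu_{\phi_k}}(E)<0$ and produce a body on which $\nu_\phi$ is negative), whereas you argue directly.
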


\proof
Let $\mu=\nu_\phi, \phi \in \Lambda^n(V \oplus V)$ be a constant coefficient valuation. Let $\phi = \sum_k \phi_k , \ \phi_k \in \Lambda_k^n(V \oplus V)$. Suppose some $\nu_{\phi_k}\not \ge 0$. By Lemma \ref{tfae +}, there is $E \in \Gr_k$ such that $\kl_{\nu_{\phi_k}}(E ) < 0$. Since the restrictions to $E$ of the $\nu_{\phi_j}, j > k,$ all vanish, it follows that $\nu_\phi(E\cap B_R) <0$ for balls of sufficiently large radius $R$. 
\endproof

\begin{Proposition}\label{nu + lambda +}
Let $\phi \in \Lambda^n(V \oplus V)$. Then $\nu_\phi \ge 0$ iff $\lambda_\phi\ge 0$.
\end{Proposition}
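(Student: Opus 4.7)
The plan is to reduce the general statement to the pure-bidegree case of Lemma \ref{tfae +} by exploiting the homogeneous decomposition. Write $\phi = \sum_{k=0}^n \phi_k$ with $\phi_k \in \Lambda_k^n(V \oplus V)$, so that $\nu_\phi = \sum_k \nu_{\phi_k}$ is the decomposition into homogeneous components and correspondingly $\lambda_\phi = \sum_k \lambda_{\phi_k}$.

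The implication $\nu_\phi \ge 0 \Rightarrow \lambda_\phi \ge 0$ is immediate: Corollary \ref{pos_ccv} forces $\nu_{\phi_k} \ge 0$ for every $k$, Lemma \ref{tfae +} then yields $\lambda_{\phi_k} \ge 0$, and so $\lambda_\phi \ge 0$.

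The substantive direction is $\lambda_\phi \ge 0 \Rightarrow \nu_\phi \ge 0$. By Corollary \ref{pos_ccv} and Lemma \ref{tfae +} it suffices to establish $\lambda_{\phi_k} \ge 0$ for each $k$; and by the equivalence (4) $\Leftrightarrow$ (3) of Lemma \ref{tfae +}, one needs to check $\lambda_{\phi_k}(\sigma) \ge 0$ only for $\sigma \ge 0$ of corank exactly $k$. The key is a separation of scales. First, $\lambda_{\phi_j}(\sigma)$ is homogeneous of degree $n-j$ in $\sigma$, since pulling back a form of bidegree $(j, n-j)$ through the rescaled graphing map $\overline{t\sigma}(v) = (v, t\sigma v)$ picks up a factor of $t$ from each of the $n-j$ second-copy inputs. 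Second, if $\sigma \ge 0$ has corank exactly $k$ and $\bar e_1,\dots,\bar e_n$ is a basis adapted to $E := \ker \sigma$, then $\overline{\sigma}(\bar e_i) = e_i$ for $i \le k$ (contributing only first-copy vectors), so that expanding the wedge $\overline{\sigma}(\bar e_1)\wedge\cdots\wedge\overline{\sigma}(\bar e_n)$ against a form of bidegree $(j, n-j)$ demands at least $k$ first-copy slots; consequently $\lambda_{\phi_j}(\sigma) = 0$ whenever $j < k$. Combining these facts,
\begin{equation*}
\lambda_\phi(t\sigma) \;=\; \sum_{j \ge k} t^{n-j}\, \lambda_{\phi_j}(\sigma)
\end{equation*}
is a polynomial in $t$ of degree at most $n-k$ whose leading coefficient is $\lambda_{\phi_k}(\sigma)$. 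The hypothesis $\lambda_\phi \ge 0$ forces this polynomial to be nonnegative for every $t \ge 0$; dividing by $t^{n-k}$ and letting $t \to \infty$ yields $\lambda_{\phi_k}(\sigma) \ge 0$, as required.

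The point that needs the most care is that one cannot extract positivity of each $\lambda_{\phi_k}$ from $\lambda_\phi \ge 0$ by a naive coefficient argument, since a polynomial in $t$ which is nonnegative on $[0,\infty)$ need not have nonnegative coefficients. The crucial ingredient that makes the argument work is precisely the vanishing $\lambda_{\phi_j}(\sigma) = 0$ for $j < k$ on corank-$k$ symmetric forms, which cleanly isolates the leading coefficient at $t = \infty$; the conclusion is then promoted from corank-$k$ positivity to full positivity via Lemma \ref{tfae +}.
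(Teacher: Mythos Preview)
Your proof is correct and follows essentially the same approach as the paper. The paper's proof is a one-line reference to Lemma~\ref{tfae +}, Corollary~\ref{pos_ccv}, and the claim that $\lambda_\phi \ge 0$ iff each $\lambda_{\phi_k} \ge 0$, the latter to be proved ``similarly to Corollary~\ref{pos_ccv}''; your scaling argument (isolating $\lambda_{\phi_k}(\sigma)$ as the leading coefficient of $t \mapsto \lambda_\phi(t\sigma)$ for $\sigma \ge 0$ of corank $k$) is exactly the intended fleshing-out of that hint.
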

\begin{proof}
 This follows from Lemma \ref{tfae +}, Corollary \ref{pos_ccv} and the fact that 
 \begin{displaymath}
 \lambda_\phi\ge 0 \text{ iff each } \lambda_{\phi_k}\ge 0, 
 \end{displaymath}
whose proof is similar to that of Corollary \ref{pos_ccv}, substituting an appropriate nonnegative symmetric bilinear form of rank $k$ in place of $E$.
\end{proof}

\begin{Theorem} \label{monotone homogeneous} A valuation $\mu\in \Val(V)$ is monotone if and only if all of its homogeneous components are monotone.
\end{Theorem}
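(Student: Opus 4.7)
The ``if'' direction is trivial since a sum of monotone valuations is monotone. For the converse, the plan is to first establish the smooth case using Theorem \ref{thm_monotone_general} together with the bidegree-separation built into the proof of Proposition \ref{nu + lambda +}, and then extend to the general continuous case by $GL(V)$-mollification.

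For the smooth case, let $\mu \in \valsm(V)$ be monotone and decompose $\mu = \sum_{k=0}^n \mu_k$. Each $\mu_k$ is smooth, and linearity of $\delta$ gives $\delta\mu = \sum_k \delta\mu_k$. By Theorem \ref{thm_monotone_general}, $\delta\mu \ge 0$ and $\mu(\{\mathrm{point}\}) \ge 0$. For $A \in \Ksm$ and $x \in \partial A$, Lemma \ref{2nd ff} expresses the integrand of $(\delta\mu)^A$ at $x$ as $\lambda_{\gamma_{x,n(x)}}(\sigma_x)$, where $\gamma$ is the translation-invariant form on $SV$ representing $\delta\mu$. Under the identification $Q_{x,n(x)} \cong P_{n(x)} \oplus P_{n(x)}$, the pieces of $\gamma_{x,n(x)}$ corresponding to the different $\mu_k$ have pairwise distinct bidegrees. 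The bidegree-separation statement proved inside Proposition \ref{nu + lambda +} --- namely that $\lambda_\phi \ge 0$ iff each homogeneous $\lambda_{\phi_k} \ge 0$ --- then forces $\delta\mu_k \ge 0$ for every $k$. Since also $\mu_k(\{\mathrm{point}\}) \ge 0$ (it vanishes for $k \ge 1$, and equals $\mu(\{\mathrm{point}\}) \ge 0$ for $k=0$), Theorem \ref{thm_monotone_general} applied componentwise yields monotonicity of each $\mu_k$.

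For a general continuous monotone $\mu \in \Val(V)$, let $\phi_\eps \in C^\infty_c(GL(V))$ be nonnegative with $\int\phi_\eps\, dg = 1$ and support shrinking to the identity, and set
\[
\mu^\eps(K) := \int_{GL(V)} \mu(g^{-1}K)\,\phi_\eps(g)\, dg.
\]
Then $\mu^\eps \in \valsm(V)$ by the standard production of smooth vectors under the $GL(V)$ action on $\Val(V)$ (cf.\ \cite{ale03b}), and $\mu^\eps$ is monotone, since $g^{-1}K \subset g^{-1}L$ whenever $K \subset L$ and $\phi_\eps \ge 0$. By the smooth case, each $(\mu^\eps)_k$ is monotone. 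The McMullen projections $\mu \mapsto \mu_k$ are continuous (obtained from $\mu(tK) = \sum t^k \mu_k(K)$ by a fixed Vandermonde inversion), and $\mu^\eps \to \mu$ pointwise on $\K(V)$, so $(\mu^\eps)_k \to \mu_k$ pointwise. Monotonicity is preserved under pointwise limits, so each $\mu_k$ is monotone.

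The main technical point is the bidegree-separation in the smooth case, which rests on matching the form-level decomposition of $\delta\mu$ on $Q_{x,n(x)} \cong P_{n(x)} \oplus P_{n(x)}$ with the homogeneous degree decomposition of $\mu$; once this is granted, the mollification step is routine.
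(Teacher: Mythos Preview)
Your proposal follows essentially the same two-step route as the paper: first the smooth case via Theorem~\ref{thm_monotone_general} and the bidegree separation of Lemma~\ref{tfae +}/Proposition~\ref{nu + lambda +}, then the general case by $GL(V)$-mollification. The one genuine gap is the assertion you flag at the end but do not prove: that the piece of $\gamma_{x,n(x)}$ coming from $\mu_k$ lies in $\Lambda^{n-1}_{k-1}(Q_{x,n(x)})$. This is not automatic---the form $\gamma$ representing $\delta\mu_k$ is $i_T D\beta$ for some $\beta$ of bidegree $(k,n-k-1)$, and the Rumin operator $D$ involves adding $\alpha\wedge\xi$ and then differentiating, so one must check that the construction preserves the relevant bidegree. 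The paper supplies exactly this computation: since $d\beta$ has bidegree $(k,n-k)$ and the correction form $\xi$ (determined by $i_T\xi=0$ and $(d\beta+d\alpha\wedge\xi)|_Q=0$) has bidegree $(k-1,n-k-1)$, one gets $i_T D\beta$ of bidegree $(k-1,n-k)$. Once you insert this paragraph, your argument is complete and coincides with the paper's.
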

\begin{proof} First we prove the statement in the smooth case.

By \eqref{eq: def delta}, given $\mu\in \valsm$, the first variation measure of $\mu$ may be expressed as  $\delta \mu = \Phi_\gamma$ for some $\gamma \in \Omega(SV)^V$ (if $\mu$ is a multiple of $\vol$ then $\gamma $ is the corresponding multiple of the form $\kappa_{n-1}$ of \cite{fu90}).
Since the second fundamental form of a smooth convex hypersurface is nonnegative semidefinite, and conversely every nonnegative semidefinite bilinear form may be realized as such at some point of the boundary of such a hypersurface, from Lemma \ref{2nd ff} it follows that the curvature measure $\Phi_\gamma$ is nonnegative iff $\lambda_{\gamma_{(x,v)} }\ge 0$ as an element of $\Lambda^{n-1}_k(Q_{x,v})$ for every $(x,v) \in SV$. By Lemma \ref{tfae +}, this is the case iff $\lambda_{\gamma^k_{(x,v)} }\ge 0,\, k =0,\dots,n-1$, where $\gamma^k_{(x,v)}\in \Lambda^{n-1}_k(Q_{x,v})$ are the homogeneous components of $\gamma_{(x,v)}$. Thus by assertion (2) from the proof of Proposition \ref{nu + lambda +}, the present proof will be completed by showing that these correspond to the homogeneous components of $\mu$. 

This amounts to showing: if $\deg \mu = k$ then $\gamma_{x,v} \in \Lambda^{n-1}_{k-1}(Q_{x,v})$ for all $(x,v)\in SV$. Since this is clearly true when $\mu$ is a multiple of $\vol$, we may assume that $\deg \mu < n$, and hence $\mu = \Psi_\beta$ for some translation-invariant form $\beta$ of bidegree $(k,n-k-1)$. Note that $d\beta$ then has bidegree $(k,n-k)$. By the construction of \cite{rumin}, $D\beta = d(\beta + \alpha \wedge \xi)$, where $\xi$ is the unique form such that $i_T\xi = 0$ and  $\left.(d\beta + d\alpha \wedge \xi )\right|_{Q_{x,v}} = 0$. In particular $\xi $ is translation-invariant and of bidegree $(k-1,n-k-1)$, so $\gamma := i_T D\beta$ has bidegree $(k-1,n-k)$, as claimed.

Next, let $\mu$ be any continuous translation invariant valuation. Let $m_1,m_2,\ldots$ be a sequence of smooth compactly supported probability measures on ${GL}(V)$ whose supports converge to the identity. The valuations 
\begin{displaymath}
 \mu * m_i :=\int_{Gl(V)} g \mu \, dm_i(g)
\end{displaymath}
where $g\mu(A):= \mu(g^{-1}A)$,
are then smooth and monotone, with $ \mu * m_i  \to \mu$. Thus the homogeneous components of each $\mu_i$ are monotone by what we have shown above, and the resulting sequences converge, respectively, to the homogeneous components of $\mu$. Since monotonicity is clearly a closed condition, the result follows.
\end{proof}

\begin{Proposition} \label{add a line} Suppose $W$ is the orthogonal direct sum  $\R \oplus V$, with orientation induced by those of $\R, V$, and let $t,\tau:W\times W\to \R$ be the projections to the two $\R$ factors respectively. Let $\phi \in \Lambda^{n}(V\times V)$. Then the three conditions
$$ \nu_\phi \ge 0,\quad\nu_{dt\wedge\phi}\ge 0,\quad\nu_{d\tau\wedge\phi}\ge 0 $$
are equivalent. If $\phi \in \Lambda^{n}_k(V\times V)$ and $\psi \in \Lambda^{n}_{k-1}(V\times V)$ then 
$\nu_{d\tau\wedge \phi + dt\wedge \psi }\ge 0$ iff both $\nu_\phi, \nu_\psi \ge 0$.
\end{Proposition}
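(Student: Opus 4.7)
The plan is to invoke Lemma~\ref{tfae +} to translate every nonnegativity assertion into a statement about Klain functions, and then to compute those Klain functions directly via \eqref{klain e}. By Corollary~\ref{pos_ccv} we may assume in the first assertion that $\phi \in \Lambda^n_k(V \oplus V)$ has pure bidegree, since the operations $\phi \mapsto dt \wedge \phi$ and $\phi \mapsto d\tau \wedge \phi$ respect the bidegree decomposition. The decisive observation is that a form $\phi \in \Lambda^n(V \oplus V)$, regarded as a form on $W \oplus W$, annihilates any argument containing the basis vectors $e_0 := (\bar e_0, 0)$ or $\epsilon_0 := (0, \bar e_0)$, where $\bar e_0$ spans the $\R$-factor of $W = \R \oplus V$; meanwhile $dt$ picks out exactly the $e_0$ component of a vector and $d\tau$ picks out exactly the $\epsilon_0$ component.

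Fix $F \in \Gr_{k+1}(W)$, choose an orthonormal basis $\bar f_1,\dots,\bar f_{n+1}$ of $W$ adapted to $F$, and set $f_i := (\bar f_i, 0)$, $\eta_j := (0, \bar f_j)$. Direct expansion of $(dt \wedge \phi)(f_1,\dots,f_{k+1},\eta_{k+2},\dots,\eta_{n+1})$, splitting into the cases ``$\R \subseteq F$'', ``$F \subseteq V$'', and ``$F$ generic'', yields for $F \not\subseteq V$ (so that $F \cap V \in \Gr_k(V)$) the formula
\begin{equation*}
\kl_{\nu_{dt \wedge \phi}}(F) \;=\; c_{n,k}\cos^2\alpha \cdot \kl_{\nu_\phi}(F \cap V),
\end{equation*}
while $\kl_{\nu_{dt \wedge \phi}}(F) = 0$ for $F \subseteq V$. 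Here $c_{n,k} > 0$ and $\alpha$ is the angle between $F$ and $V$. As $F$ varies over $\Gr_{k+1}(W) \setminus \Gr_{k+1}(V)$, the intersection $F \cap V$ sweeps out all of $\Gr_k(V)$, and Lemma~\ref{tfae +} then gives $\nu_{dt \wedge \phi} \ge 0 \iff \nu_\phi \ge 0$. The symmetric computation for $F \in \Gr_k(W)$ produces $\kl_{\nu_{d\tau \wedge \phi}}(F) = c'_{n,k}\sin^2\alpha \cdot \kl_{\nu_\phi}(\pi_V F)$ when $\R \not\subseteq F$ (and zero otherwise), establishing the third equivalence.

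For the final assertion we simply add the two preceding computations (with $\phi, \psi$ in their appropriate roles) for $F \in \Gr_k(W)$. On $F = \R \oplus E$ with $E \in \Gr_{k-1}(V)$ only the $dt \wedge \psi$ summand survives and produces a positive multiple of $\kl_{\nu_\psi}(E)$; on $F \subseteq V$ only the $d\tau \wedge \phi$ summand survives and produces a positive multiple of $\kl_{\nu_\phi}(F)$; on a generic $F$ one obtains an expression of the form $c_1 \sin^2\alpha \cdot \kl_{\nu_\phi}(\pi_V F) + c_2 \cos^2\alpha \cdot \kl_{\nu_\psi}(F \cap V)$ with $c_1, c_2 > 0$. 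The direction ``$\Leftarrow$'' is then immediate since each of the three expressions is visibly nonnegative whenever $\nu_\phi, \nu_\psi \ge 0$. For ``$\Rightarrow$'', testing on the two special classes of planes forces $\kl_{\nu_\psi} \ge 0$ on $\Gr_{k-1}(V)$ and $\kl_{\nu_\phi} \ge 0$ on $\Gr_k(V)$ respectively, and Lemma~\ref{tfae +} completes the argument. The only foreseeable obstacle is the bookkeeping required to verify that the orientation signs appearing in \eqref{klain e} are consistent across the various adapted-basis choices, so that the proportionality constants $c_{n,k}, c'_{n,k}, c_1, c_2$ are genuinely positive rather than merely of uniform sign.
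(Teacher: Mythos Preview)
Your approach is essentially the same as the paper's: both reduce to Klain functions via Lemma~\ref{tfae +}, compute those Klain functions by choosing an adapted orthonormal basis for a generic $F \in \Gr_{k+1}(W)$ or $\Gr_k(W)$, and for the second assertion test on the special planes $\R \oplus E$ and $\{0\} \times F$. The paper carries out the explicit coordinate computation (writing $c\,\partial/\partial t + s\,\bar e_{k+1}$, etc., and tracking the sign $(-1)^{k-1}$ in the orientation), arriving at exactly your $c^2$ and $s^2$ prefactors; your description in terms of $\cos^2\alpha$, $\sin^2\alpha$, $F\cap V$, and $\pi_V F$ is the geometric repackaging of the same calculation. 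The only substantive omission is the orientation bookkeeping you yourself flag at the end---the paper does carry this out, and it is what guarantees the constants come out with the right sign.
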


\begin{proof} We may assume that $\phi \in \Lambda^{n}_k(V\times V)$, so that 
$$dt\wedge \phi\in \Lambda^{n+1}_{k+1}(W\times W), \ d\tau\wedge \phi\in \Lambda^{n+1}_{k}(W\times W).$$ 
Given $E \in \Gr_{k+1}(W)$, there is a positively oriented basis of $W$ adapted to $E$ of the form
$$  c \frac{\partial}{\partial t} + s \bar e_{k+1},\bar e_1\dots,\bar e_k, -s\frac{\partial}{\partial t} + c \bar e_{k+1},\bar e_{k+2},\dots,\bar  e_n $$
where $\bar e_1,\dots, \bar e_n$ is a positively oriented orthonormal basis for $V$ and $c^2 + s^2 = 1$. Similarly, given any $F \in \Gr_{k}(W)$ there is a positively oriented basis of $W$ adapted to $F$ of the form
$$  c \frac{\partial}{\partial t} + s \bar e_1,\dots,\bar e_k, -s\frac{\partial}{\partial t} + c \bar e_1,\bar e_{k+1},\dots,(-1)^{k-1}\bar e_n .$$
By Lemma \ref{tfae +}, we may check the nonnegativity of $\nu_{dt\wedge\phi}$ by evaluating
\begin{align}
\notag dt\wedge\phi\left(c \frac{\partial}{\partial t} + s e_{k+1},e_1\dots,e_k,-s\frac{\partial}{\partial \tau} + c \eps_{k+1},\eps_{k+2},\dots,\eps_n\right)&\\
\label{dt}= dt\wedge\phi\left(c \frac{\partial}{\partial t} ,e_1,\dots,e_k, c \eps_{k+1},\eps_{k+2},\dots,\eps_n\right) &\\
\notag= c^2\phi(e_1,\dots,e_{k},\eps_{k+1},\dots,\eps_n), \, \  \quad\quad\quad\quad\quad\quad&
\end{align}
and of $\nu_{d\tau\wedge\phi}$ by evaluating
\begin{align}
\notag d\tau\wedge\phi\left(c \frac{\partial}{\partial t} + s e_1,\dots,e_k,-s\frac{\partial}{\partial \tau} + c \eps_1,\eps_{k+1},\dots,(-1)^{k-1}\eps_n\right)&\\
\label {dtau}= d\tau\wedge\phi\left(s e_1 ,\dots,e_k,-s\frac{\partial}{\partial \tau}, \eps_{k+1},\dots,(-1)^{k-1}\eps_n\right)&\\
\notag= s^2\phi(e_1,\dots,e_{k},\eps_{k+1},\eps_{k+2},\dots,\eps_n).\quad\quad\quad\quad\quad\quad \ \ \, &
\end{align}
By \eqref{klain e}, each of these expressions is nonnegative precisely when $\nu_\phi \ge 0$, which proves the first assertion.

To prove the second assertion, it is enough to show that the first condition implies the second. But  \eqref{dt} and \eqref{dtau} imply that for $E\in \Gr_k(V), \ F \in \Gr_{k-1}(V)$
\begin{align}
\omega_{n+1-k}^{-1}\kl_{\nu_{d\tau\wedge \phi + dt\wedge \psi } }(\{0\} \times E)&= \omega_{n-k}^{-1}\kl_{\nu_\phi}(E), \\
\kl_{\nu_{d\tau\wedge \phi + dt\wedge \psi } }(\R \times F )&= \kl_{\nu_\psi}(F), 
\end{align}
from which this follows at once.
\end{proof}

\section{Special bases for $\valun$}\label{bases}

Every valuation in $\valun(\C^n)$ is even and smooth.

\subsection{The monomial basis and its Fourier transform}\label{monomial basis} We recall the global valuations $s \in \valun_2, t\in \valun_1$ from \cite{fu06}. The monomials 
$$s^p t^{k-2p}, \quad 0\le p \le \min \left\{\left\lfloor \frac k 2\right\rfloor, \left\lfloor \frac {2n-k}2 \right\rfloor\right\}$$
constitute a basis of $\valun$. In Alesker's \cite{ale04} notation, 
\begin{displaymath}
s^pt^{k-2p} = \frac{(k-2p)!\omega_{k-2p}}{\pi^ {k-2p}} U_{k,p}, 
\end{displaymath}
where
\begin{equation*}
U_{k,p}(K) := \int_{\overline{\Gr}_{2n-2p,{n-p} }} \mu_{k-2p}(  K\cap \bar E) \, d\bar E
\end{equation*}
and the integral is over the corresponding affine Grassmannian with Haar measure $d\bar E$ normalized as in (19) of \cite{fu06}.
By \cite{befu06} and \cite{ale04}, their Fourier transforms are given by 
$$
\fourier {s^pt^{k-2p}} = \fourier{(s^p)}*\fourier{(t^{k-2p})} = s^{n-p}* t^{2n-k+2p}= C_{2n-k,n-p}
$$
where $*$ is the convolution product of \cite{befu06} and 
\begin{equation*}
C_{k,q}(K) := \int_{\Gr_{2q,q}} \mu_k(\pi_E(K)) \, dE.
\end{equation*}

We recall from \cite{fu06}:
\begin{Theorem}\label{relations theorem}
 The ideal of polynomials $p$ such that $p(s,t) =0$ locally at $n$ is the ideal $(f_{n+1},f_{n+2})$, where $\deg f_k(s,t) =k$ and $\log (1+s+t) = \sum_k f_k(s,t)$. The $f_k$ satisfy the relations
\begin{align}
\notag f_1 & = t\\
\notag f_2 & = s-\frac{t^2}{2}\\
\label{recursion}
ksf_k+(k+1)&tf_{k+1}+(k+2)f_{k+2}   =0, \quad k \geq 1.
\end{align} 

\end{Theorem}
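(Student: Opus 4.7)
The plan breaks into two nearly independent pieces: establishing the recursion (with its two base cases) for the $f_k$, and then identifying the full relation ideal between $s$ and $t$ in $\valun(\C^n)$.

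For the recursion I would work purely formally. Introduce an auxiliary weight parameter $\lambda$ via the substitution $s\mapsto\lambda^2 s,\ t\mapsto\lambda t$, so that
\[
\log(1+\lambda^2 s+\lambda t)=\sum_{k\ge 1}\lambda^k f_k(s,t)
\]
is the decomposition into pieces of valuation degree $k$ (recall $\deg s=2,\ \deg t=1$). Differentiating both sides in $\lambda$, multiplying through by $1+\lambda^2 s+\lambda t$, and matching coefficients of $\lambda^k$ yields the three-term identity $(k-1)s f_{k-1}+k t f_{k}+(k+1)f_{k+1}=0$ for $k\ge 2$, which becomes the claimed recursion after shifting the index by one. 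The two lowest-degree terms read off immediately as $f_1=t$ and $f_2=s-\tfrac{t^2}{2}$ from $\log(1+x)=x-\tfrac{x^2}{2}+\cdots$.

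For the ideal statement, let $I\subset\R[s,t]$ denote the ideal of polynomial relations holding at $(s,t)$ in $\valun(\C^n)$. The containment $(f_{n+1},f_{n+2})\subseteq I$ reduces, by the recursion just derived, to the two concrete vanishings $f_{n+1}(s,t)=0$ and $f_{n+2}(s,t)=0$ in $\valun(\C^n)$: once these are in hand the recursion propagates and forces $f_k(s,t)=0$ for every $k\ge n+1$. I would establish these two vanishings via the Klain embedding. Namely, write $\kl_s$ and $\kl_t$ as explicit functions of the Tasaki multiple K\"ahler angle $\Theta$ on the unitary orbits, use the Crofton-measure description of the Alesker product to assemble $\kl_{f_{n+1}(s,t)}$ and $\kl_{f_{n+2}(s,t)}$ on $\Gr_{n+1}(\C^n)$ and $\Gr_{n+2}(\C^n)$ respectively, and verify by a direct symmetric-function computation that both are identically zero on every admissible angle. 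Klain's injectivity theorem then upgrades the Klain-function vanishings to valuation vanishings. For the reverse inclusion $I\subseteq(f_{n+1},f_{n+2})$ I would run a Hilbert-series comparison: the ideal $(f_{n+1},f_{n+2})$ is homogeneous for the valuation grading, and a short reduction argument shows that the quotient $\R[s,t]/(f_{n+1},f_{n+2})$ is spanned by the monomials $s^p t^{k-2p}$ with $0\le p\le\min\{\lfloor k/2\rfloor,\lfloor(2n-k)/2\rfloor\}$ and $0\le k\le 2n$. Since exactly these monomials were recalled to form a basis of $\valun(\C^n)$ just before the theorem, the surjection $\R[s,t]/(f_{n+1},f_{n+2})\twoheadrightarrow\valun(\C^n)$ matches dimensions in each degree and must be an isomorphism. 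The resulting Hilbert series $(1-\lambda^{n+1})(1-\lambda^{n+2})/\bigl((1-\lambda)(1-\lambda^2)\bigr)$ also explains, as a bonus, the coincidence of Betti numbers with $\Gr_2(\C^{n+2})$ mentioned in the introduction.

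The genuine obstacle is the geometric step: showing $f_{n+1}$ and $f_{n+2}$ really are relations at level $n$. The recursion is pure algebra, and the dimension-count side is elementary once the bases are known, so everything in the proof is bookkeeping except for these two vanishings, which require honest integral-geometric input about how $s$ and $t$ act on subspaces of codimension just above $n$. It is precisely this input that fixes the role of $n$ in the statement; without it no amount of formal manipulation will isolate the correct degrees at which the first relations appear.
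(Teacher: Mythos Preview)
The paper does not prove this theorem at all: it is introduced with ``We recall from \cite{fu06}'' and no argument is given. So there is no proof in the present paper to compare your proposal against; the result is imported wholesale from Fu's earlier paper on the structure of the unitary valuation algebra.

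Your derivation of the recursion is correct and is the standard generating-function argument. Your outline for the ideal statement is also structurally sound: once $f_{n+1}$ and $f_{n+2}$ vanish, the recursion forces all higher $f_k$ to vanish, and then a dimension count against the monomial basis (which the paper also imports from \cite{fu06}) closes the argument. One caveat on the Hilbert-series step: to get the series $(1-\lambda^{n+1})(1-\lambda^{n+2})/((1-\lambda)(1-\lambda^2))$ you need $f_{n+1},f_{n+2}$ to be a regular sequence, i.e.\ coprime in $\R[s,t]$; this is true but requires a word of justification. You should also be alert to possible circularity: in \cite{fu06} the monomial basis and the relation ideal are established together, so if you are reconstructing the proof from scratch you cannot simply cite the basis as prior input.

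As you correctly identify, the substantive content is the two vanishings $f_{n+1}(s,t)=0$ and $f_{n+2}(s,t)=0$ in $\valun(\C^n)$. Your proposed route via explicit Klain functions and a symmetric-function computation is plausible but is not how \cite{fu06} proceeds; there the argument runs through an explicit evaluation of the generating function $\sum \mu_k(\,\cdot\,)$ on suitable test bodies, which yields the logarithmic relation more directly than assembling Klain functions of products. If you want to carry out your version, the step ``use the Crofton-measure description of the Alesker product to assemble $\kl_{f_{n+1}(s,t)}$'' hides real work, since computing Klain functions of Alesker products of non-intrinsic-volume valuations is not a matter of pointwise multiplication.
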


\subsection{The hermitian intrinsic volumes}\label{hermitian section}
\begin{Theorem} \label{thm_herm_int_vols}
There exist global valuations $\mu_{k,q} \in \valuinf_k$ uniquely determined by the relations
\begin{equation} \label{defining_equation_mu}
\kl_{\mu_{k,q}} (E^{k',q'}) = \delta^{k',q'}_{k,q}.
\end{equation}
The valuations $\mu_{k,q}, \ \max(0,k-n)\le q\le \lfloor \frac k 2\rfloor$,  comprise a basis for the vector space $\valun_k$. 

The $\mu_{k,q}$ are all constant coefficient valuations in the sense of Section \ref{const coeff}, and satisfy the local relations
\begin{equation}\label{fmu}
\fourier{\mu_{k,q}} = \mu_{2n-k, n-k +q}.
\end{equation}
\end{Theorem}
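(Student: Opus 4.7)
Since every element of $\valun(\C^n)$ is even and smooth, the Klain map embeds $\valun_k$ into the space of $U(n)$-invariant continuous functions on $\Gr_k(\C^n)$. By Alesker's structure theorem, $\dim\valun_k = d_{n,k} := \#\{q : \max(0,k-n)\le q\le \lfloor k/2\rfloor\}$. The plan is to show that the evaluation map $\mu \mapsto (\kl_\mu(E^{k,q}))_q$ into $\R^{d_{n,k}}$ is injective; by dimension it is then a bijection, forcing the existence of unique $\mu_{k,q}$ satisfying \eqref{defining_equation_mu} and showing that they form a basis of $\valun_k$. Injectivity can be verified by computing the Klain functions of Alesker's Crofton basis $\{U_{k,p'}\}$ explicitly at each $E^{k,q}$: the resulting transition matrix is of Pascal type, reflecting $e_j(\underbrace{1,\dots,1}_q, 0,\dots,0) = \binom{q}{j}$ together with the identification of Klain functions of $\valun_k$ with symmetric polynomials in $\cos^2\theta_1,\dots,\cos^2\theta_p$ of degree at most $p=\lfloor k/2\rfloor$. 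Globality follows automatically: since $\Gr_{k,q}(\C^{n-1})\subset\Gr_{k,q}(\C^n)$, the restriction of $\mu_{k,q}^{(n)}$ to $\C^{n-1}$ still satisfies the delta property and hence coincides with $\mu_{k,q}^{(n-1)}$ by uniqueness, so the family assembles into a global valuation in $\valuinf_k$.

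\textbf{Constant coefficient property.} Alesker's generators $U_{k,p'}$ are constant coefficient in the sense of Section \ref{const coeff}: the intrinsic volumes entering their Crofton-type definition are Lipschitz-Killing invariants expressible as $\nu_\phi$ for explicit translation-invariant forms $\phi\in\Lambda^n(V\oplus V)$, and averaging over (affine) unitary-homogeneous Grassmannians preserves this property. Since $\{U_{k,p'}\}$ is a basis of $\valun_k$, each $\mu_{k,q}$ is a linear combination of constant coefficient valuations and is therefore itself constant coefficient.

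\textbf{Fourier transform.} I apply the Klain characterization $\kl_{\widehat\phi}(E) = \kl_\phi(E^\perp)$. Writing $E^{k,q} = F \oplus G$ with $F \subset \C^n$ complex of complex dimension $q$ and $G$ isotropic of real dimension $k-2q$, a direct calculation in $\R^{2n}$ shows $(E^{k,q})^\perp = H \oplus JG$, where $H := (F \oplus G \oplus JG)^\perp$ is complex of complex dimension $n-k+q$ and $JG$ remains isotropic (since $J$ preserves $\omega$). Hence $(E^{k,q})^\perp\in\Gr_{2n-k,n-k+q}$, and symmetrically $(E^{2n-k,q'})^\perp\in\Gr_{k,q'+k-n}$. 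Therefore
\begin{displaymath}
\kl_{\widehat{\mu_{k,q}}}(E^{2n-k,q'}) = \kl_{\mu_{k,q}}(E^{k,q'+k-n}) = \delta^{q'+k-n}_q,
\end{displaymath}
which equals $1$ precisely when $q' = n-k+q$; by uniqueness of the delta property, $\widehat{\mu_{k,q}} = \mu_{2n-k,n-k+q}$.

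The main obstacle will be the injectivity of the Klain evaluation at the vertex orbits $E^{k,q}$, needed for both existence and the basis property. Establishing this requires either a direct computation of $\kl_{U_{k,p'}}(E^{k,q})$ combined with the Pascal-matrix identity above, or a structural argument identifying the image of the Klain embedding with symmetric polynomials in $\cos^2\theta_i$ of degree at most $p$. The remaining assertions---globality, the constant coefficient property, and the Fourier formula---then follow routinely from Klain uniqueness, the Klain/$\perp$ characterization of $\F$, and the tools developed in Section \ref{const coeff}.
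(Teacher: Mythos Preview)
Your approach is abstract (existence via dimension count and Klain injectivity), whereas the paper's is constructive: it writes down explicit invariant forms $\theta_{k,q}=c_{n,k,q}\,\theta_0^{n+q-k}\theta_1^{k-2q}\theta_2^q\in\Lambda^{2n}(\C^n\oplus\C^n)$, sets $\mu_{k,q}:=\nu_{\theta_{k,q}}$, and then evaluates \eqref{klain e} on an adapted basis to get the delta property directly. The Fourier relation you handle the same way as the paper.

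There is a genuine gap in your argument for the constant coefficient property. You assert that Alesker's $U_{k,p'}$ are constant coefficient because ``averaging over (affine) unitary-homogeneous Grassmannians preserves this property,'' but this is not justified and is not obvious: for a fixed affine plane $\bar E$, the assignment $K\mapsto\mu_{k-2p}(K\cap\bar E)$ is not translation-invariant on $\C^n$, and after averaging one has to produce a single element of $\Lambda^{2n}(\C^n\oplus\C^n)$ representing the result. Constant coefficient valuations form a \emph{proper} subspace of $\valsm$, so closure under such Crofton-type constructions requires proof. In the paper this issue never arises because $\mu_{k,q}$ is constant coefficient by construction; moreover, the explicit forms $\theta_{k,q}$ (and the $1$-forms $\beta,\gamma$) are used essentially later, in Proposition \ref{prop_hermitian_curvature_measures} and throughout Section 4, so the abstract route would leave you without the tools needed for the monotone cone.

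There is also a circularity risk in the existence/basis step. You invoke ``the identification of Klain functions of $\valun_k$ with symmetric polynomials in $\cos^2\theta_i$ of degree at most $p$,'' but in the paper this is Lemma \ref{elem sym basis}, proved \emph{after} the $\mu_{k,q}$ are constructed, precisely by exhibiting the explicit $\theta_{k,q}$ and reading off \eqref{klain e}. Your alternative---computing $\kl_{U_{k,p'}}(E^{k,q})$ directly and checking that the resulting matrix is invertible---is in principle legitimate, but it is a nontrivial integral-geometric computation that you have not carried out; you cannot fall back on the symmetric-polynomial structure without an independent proof of it.
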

\begin{proof}
Let $(z_1,\ldots,z_n,\zeta_1,\ldots,\zeta_n)$ be canonical coordinates
on $T\mathbb{C}^n\simeq \mathbb{C}^n \times \mathbb{C}^n$, where $z_i=x_i+\sqrt{-1}y_i$ and
$\zeta_i=\xi_i+\sqrt{-1}\eta_i$. The natural action of $U(n)$ on $T\mathbb{C}^n$ corresponds to the
diagonal action on $\mathbb{C}^n \times \mathbb{C}^n$. 

Following Park \cite{pa02}, we consider the elements 
\begin{align*}
\theta_0 & :=\sum_{i=1}^n d\xi_i \wedge d\eta_i \\
\theta_1 & := \sum_{i=1}^n \left(dx_i \wedge d\eta_i-dy_i \wedge d\xi_i\right)\\
\theta_2 & :=\sum_{i=1}^n dx_i \wedge dy_i
\end{align*}
in $\Lambda^2
(\mathbb{C}^n \oplus \mathbb{C}^n)^*$. 
Thus $\theta_2$ is the pullback via the projection map $T\mathbb{C}^n \to \mathbb{C}^n$ of the K\"ahler form of $\mathbb{C}^n$, and $\theta_0 +\theta_ 1 + \theta_2 $ is the pullback of the K\"ahler form under the exponential map $\exp(z,\zeta):= z+ \zeta$. Together with the symplectic form $\theta_s = \sum_{i =1}^n (dx_i \wedge d\xi_i + dy_i \wedge d\eta_i)$, the $\theta_i$ generate 
the algebra of all $U(n)$-invariant elements in $\Lambda^*
(\mathbb{C}^n \times \mathbb{C}^n)$ (cf. \cite{pa02}). 
 
For positive integers $k,q$ with $\max\{0,k-n\} \leq q \leq
\frac{k}{2} \leq n$, we now set  
\begin{displaymath}
\theta_{k,q}:= c_{n,k,q}\theta_0^{n+q-k}\wedge\theta_1^{k-2q}\wedge\theta_2^q.
\end{displaymath}
where
$$ c_{n,k,q}:= \frac{1}{q!(n-k+q)!(k-2q)! \omega_{2n-k}} .$$
Note that $\theta_{k,q} \in \Lambda^{2n}_k(\C^n \times \C^n)$. 
We put
\begin{equation} \label{eq_mu_integration}
\mu_{k,q}(K):=\int_{N_1(K)} \theta_{k,q}.
\end{equation}
Since this valuation has constant coefficients in the sense of section \ref{const coeff}, we may evaluate its Klain function using \eqref{klain e}.
We write $E^{k,p}$ for a generic element of $\Gr_{k,p}(\C^n)$. By invariance we may assume that $E^{k,p}= \C^p \oplus \R^{k-2p}$, with adapted basis
\begin{equation}\label{adapted basis}
\frac{\partial}{\partial z_1},\dots,\frac{\partial}{\partial z_p},\frac{\partial}{\partial x_{p+1}},\dots,\frac{\partial}{\partial x_{k-p}}, \frac{\partial}{\partial y_{p+1}},\dots,\frac{\partial}{\partial y_{k-p} },
\frac{\partial}{\partial z_{k-p+1}},\dots,\frac{\partial}{\partial z_n},
\end{equation}
where $\frac{\partial}{\partial z_i}$ stands for the pair $\frac{\partial}{\partial x_i},\frac{\partial}{\partial y_i}$. We evaluate
\begin{align}
\notag \theta_{k,q}\left(\frac{\partial}{\partial z_1},\right. &\dots, \frac{\partial}{\partial z_p},  \frac{\partial}{\partial x_{p+1}},\left.\dots,\frac{\partial}{\partial x_{k-p}}, \frac{\partial}{\partial \eta_{p+1}},\dots,\frac{\partial}{\partial \eta_{k-p} },
\frac{\partial}{\partial \zeta_{k-p+1}},\dots,\frac{\partial}{\partial \zeta_n}\right) \\
\notag &=\delta^p_q p! (n-k +p)!\theta_1^{k-2p}\left(\frac{\partial}{\partial x_{p+1}},\dots,\frac{\partial}{\partial x_{k-p}}, \frac{\partial}{\partial \eta_{p+1}},\dots,\frac{\partial}{\partial \eta_{k-p} }\right)\\
&= \frac{\pm \delta_q^p}{\omega_{2n-k}} ,
\end{align}
where the sign is that of the basis $\frac{\partial}{\partial x_{p+1}},\dots,\frac{\partial}{\partial x_{k-p}}, \frac{\partial}{\partial y_{p+1}},\dots,\frac{\partial}{\partial y_{k-p} }$ relative to the standard orientation of $\C^{k-2p}$, i.e. the same as that of the basis \eqref{adapted basis}.

This proves \eqref{defining_equation_mu}. In particular, for fixed $n$ the $\mu_{k,p}$ in the given range are linearly independent, since their Klain functions are. Since their number equals the dimension of $\valun$ they form a basis. 
 Finally, since $(E^{k,q})^\perp=E^{2n-k,n-k+q}$, the relation \eqref{fmu} is immediate, concluding the proof of Thm. \ref{thm_herm_int_vols}.
\end{proof}

As a final remark about the hermitian intrinsic volumes, we recall from Theorem \ref{relations theorem} that the kernel of the map $\valuinf_{n+1} \to \valun_{n+1}$ is spanned by the polynomial $f_{n+1}$. At the same time it is clear that $\mu_{n+1,0} =0$ locally at $n$. This implies the following global relation.
\begin{Lemma} \label{mu = cf} There are constants $\gamma_k\ne 0$ such that 
$$\mu_{k,0} = \gamma_k f_{k}.$$
\end{Lemma}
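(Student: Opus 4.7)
The plan is to apply Theorem \ref{relations theorem} with $n$ replaced by $k-1$ to the restriction map $\valuinf_k \to \Val^{U(k-1)}_k$: since $f_{k+1}$ has degree $k+1$, the degree-$k$ slice of the ideal $(f_k, f_{k+1})$ is exactly the line $\R \cdot f_k$, so the kernel of this restriction is one-dimensional and spanned by $f_k$. The identity $\mu_{k,0} = \gamma_k f_k$ will therefore follow once I verify (i) that $\mu_{k,0}$ restricts to zero in $\Val^{U(k-1)}_k$, and (ii) that $\mu_{k,0}$ is nonzero as a global valuation.

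For (i), every element of $\valun$ is even, so by Klain's theorem it suffices to check that $\kl_{\mu_{k,0}}$ vanishes identically on $\Gr_k(\C^{k-1})$. The $U(k-1)$-orbits on this Grassmannian are the strata $\Gr_{k,q}(\C^{k-1})$ with $q$ in the admissible range $\max(0, k-(k-1)) \le q \le \lfloor k/2 \rfloor$, i.e.\ $1 \le q \le \lfloor k/2 \rfloor$; the index $q = 0$ does not appear, because $\C^{k-1}$ contains no totally real isotropic subspace of real dimension $k$ (the maximal such dimension is $k-1$). On each remaining orbit the defining relation \eqref{defining_equation_mu} gives $\kl_{\mu_{k,0}}(E^{k,q}) = \delta^{k,q}_{k,0} = 0$, so $\mu_{k,0}$ restricts to zero, as needed.

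For (ii), fix any $n \ge k$: then $\Gr_{k,0}(\C^n)$ is nonempty and \eqref{defining_equation_mu} forces $\kl_{\mu_{k,0}} \equiv 1$ there. Hence $\mu_{k,0}$ is nonzero in $\Val^{U(n)}_k$, forcing $\gamma_k \ne 0$. The one step that requires a moment of care is (i): recognising that the emptiness of $\Gr_{k,0}(\C^{k-1})$ --- a Lagrangian-dimension constraint --- is what removes the one orbit on which $\kl_{\mu_{k,0}}$ is nonzero, leaving the defining relation \eqref{defining_equation_mu} to do the rest. Every other step is pure bookkeeping against Theorem \ref{relations theorem}.
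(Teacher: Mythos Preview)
Your approach is the same as the paper's: the sentence preceding the lemma in the paper already sketches exactly this argument (kernel of the restriction $\valuinf_{k}\to\Val^{U(k-1)}_k$ is the line $\R f_k$, and $\mu_{k,0}$ lies in it). You have simply unpacked the paper's ``it is clear that $\mu_{k,0}=0$ locally at $k-1$'' into explicit steps, which is fine.

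There is, however, one factual slip in your step (i). You assert that the $U(k-1)$-orbits on $\Gr_k(\C^{k-1})$ are precisely the strata $\Gr_{k,q}(\C^{k-1})$. This is false for $k\ge 4$: by Tasaki's description (Section~\ref{tasaki section}) the orbits are parametrized by the multiple K\"ahler angle and form a continuum, of which the $\Gr_{k,q}$ are only the extreme orbits (vertices of the simplex). So ``$\kl_{\mu_{k,0}}(E^{k,q})=0$ for each $q\ge 1$'' does not, by itself, say the Klain function vanishes on all of $\Gr_k(\C^{k-1})$.

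The repair is immediate and uses only what you already have on the table. By Theorem~\ref{thm_herm_int_vols} the restrictions $\mu_{k,q}|_{\C^{k-1}}$, $1\le q\le\lfloor k/2\rfloor$, form a basis of $\Val^{U(k-1)}_k$. Write the restriction of the global $\mu_{k,0}$ as $\sum_{q\ge 1} c_q\,\mu_{k,q}$ and evaluate the Klain function at each $E^{k,q'}$ with $q'\ge 1$: the defining relation~\eqref{defining_equation_mu} gives $0=\delta^{q'}_0$ on the left and $c_{q'}$ on the right, hence all $c_{q'}=0$. This replaces the incorrect orbit claim and the appeal to Klain's theorem, and the rest of your proof goes through unchanged.
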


The valuation $\mu_{n,0} \in \valun $ was originally discovered by Kazarnovskii, and is called the {\bf Kazarnovskii pseudo-volume}.

\subsection{Hermitian curvature measures}
Next we consider the $\overline{U(n)}$-invariant curvature measures, which correspond to invariant $(2n-1)$-forms on the sphere bundle $S\C^n\simeq \C^n \times S^{2n-1}\subset \C^n \times \C^n\simeq T\C^n$. Consider first the following three invariant 1-forms on $T\C^n$ and their exterior derivatives:
\begin{align*}
\alpha  = \sum_{i=1}^n \xi_i dx_i+\eta_i dy_i, &\quad d\alpha = -\theta_s\\
\beta  = \sum_{i=1}^n \xi_i dy_i - \eta_i dx_i, &\quad d\beta = \theta_1\\
\gamma  = \sum_{i=1}^n \xi_i d\eta_i-\eta_i d\xi_i, &\quad d\gamma = 2\theta_0,
\end{align*}
where $\theta_s$ is the symplectic form of $\C^n\times \C^n \simeq T^*\C^n$. The restrictions of these forms to the sphere bundle $\C^n\times S^{2n-1}$, together with that of $\theta_2$,  generate the algebra of invariant forms on that space (we will not distinguish notationally the forms from their restrictions) \cite{pa02}. Thus each form of degree $2n-1$ that is a product of these forms gives rise to a $\overline{U(n)}$-invariant curvature measure. Since the contact form $\alpha$ and its exterior derivative $\theta_s$ vanish identically on any normal cycle, it is enough to consider the products of $\beta,\gamma, \theta_0,\theta_1,\theta_2$. Since $\partial N_1(K) =N(K)$, from Stokes' theorem one easily computes that

\begin{Proposition} \label{prop_hermitian_curvature_measures}
Set $B_{k,q}:=\Phi_{\beta_{k,q}}, \Gamma_{k,q}:=\Phi_{\gamma_{k,q}}$ to be the curvature measures corresponding to the invariant forms
\begin{align}
\beta_{k,q}:=c_{n,k,q}&\,\beta \wedge \theta_0^{n-k+q} \wedge \theta_1^{k-2q-1} \wedge \theta_2^q,\quad k > 2q,\\
\gamma_{k,q}:=\frac  {c_{n,k,q}} 2&\,\gamma \wedge \theta_0^{n-k+q-1} \wedge \theta_1^{k-2q} \wedge \theta_2^q, \quad n>k-q
\end{align}
on the sphere bundle $\C^n\times S^{2n-1}$. Then both of these curvature measures give rise to the hermitian intrinsic volume $\mu_{k,q}$, i.e. for $K \in \K$
$$
\mu_{k,q} (K)  = B_{k,q}^K(K)= \Gamma_{k,q}^K(K).
$$
\end{Proposition}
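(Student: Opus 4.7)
The plan is to apply Stokes' theorem to the disk bundle $N_1(K)$ together with the identity $\mu_{k,q}(K) = \int_{N_1(K)} \theta_{k,q}$ from equation \eqref{eq_mu_integration}. Since $\partial N_1(K) = N(K)$, for any $(2n-1)$-form $\eta$ defined on $T\C^n$ we obtain
\begin{equation*}
B_{k,q}^K(K) = \int_{N(K)} \beta_{k,q} = \int_{N_1(K)} d\beta_{k,q},
\end{equation*}
and likewise with $\gamma_{k,q}$ in place of $\beta_{k,q}$. Thus the proposition reduces to verifying the two identities of forms $d\beta_{k,q} = \theta_{k,q} = d\gamma_{k,q}$ on $\C^n \times \C^n$.

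Both identities follow at once from the ingredients listed just before the proposition. The three $2$-forms $\theta_0,\theta_1,\theta_2$ have constant coefficients on $\C^n\times\C^n$ and are therefore closed, while $d\beta=\theta_1$ and $d\gamma=2\theta_0$. Since the contact form $\alpha$ does not appear in either $\beta_{k,q}$ or $\gamma_{k,q}$, applying $d$ to the wedge product picks up only one nontrivial contribution in each case:
\begin{align*}
d\beta_{k,q} &= c_{n,k,q}\,\theta_1 \wedge \theta_0^{n-k+q} \wedge \theta_1^{k-2q-1} \wedge \theta_2^q = c_{n,k,q}\,\theta_0^{n-k+q}\wedge\theta_1^{k-2q}\wedge\theta_2^q, \\
d\gamma_{k,q} &= \tfrac{c_{n,k,q}}{2}\,(2\theta_0)\wedge\theta_0^{n-k+q-1}\wedge\theta_1^{k-2q}\wedge\theta_2^q = c_{n,k,q}\,\theta_0^{n-k+q}\wedge\theta_1^{k-2q}\wedge\theta_2^q,
\end{align*}
using that the $\theta_i$ are even-degree and so commute under wedge. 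The normalizing factor $\tfrac{1}{2}$ in the definition of $\gamma_{k,q}$ is placed precisely to cancel the $2$ in $d\gamma=2\theta_0$, so both derivatives coincide with $\theta_{k,q}$.

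There is no serious obstacle here; the hint in the statement already dictates the argument. The points worth verifying are (i) that $\beta,\gamma,\theta_0,\theta_1,\theta_2$ all extend from $S\C^n$ to $T\C^n$, which is immediate from their coordinate expressions, so that Stokes is legitimate on $N_1(K)$ regarded as a chain in $T\C^n$; and (ii) that the exponents $k-2q-1$ and $n-k+q-1$ are nonnegative, which is exactly why the proposition splits into the two hypotheses $k>2q$ and $n>k-q$, each of which guarantees that at least one of the two representations is available for a given admissible $(k,q)$.
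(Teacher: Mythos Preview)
Your proof is correct and follows exactly the approach the paper indicates: the paper simply writes ``Since $\partial N_1(K)=N(K)$, from Stokes' theorem one easily computes that\ldots'' and leaves the details implicit, while you have spelled out the computation $d\beta_{k,q}=d\gamma_{k,q}=\theta_{k,q}$ using $d\beta=\theta_1$, $d\gamma=2\theta_0$, and the closedness of the $\theta_i$. Your additional remarks about the extension of the forms to $T\C^n$ and the role of the exponent hypotheses $k>2q$, $n>k-q$ are accurate and useful clarifications.
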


\subsection{Tasaki valuations} \label{tasaki section} Tasaki (\cite{tasaki00, tasaki03}) was the first to give explicit Poincar\'e-Crofton formulas for submanifolds in complex space forms. As a preparatory step, Tasaki showed that if $k \le n$ then the family of $U(n)$ orbits of $\Gr_k(\C^n)$ is in natural one-to-one correspondence with the $p$-dimensional simplex
$$0\le \theta_1\le\dots,\theta_p \le \frac \pi 2, \quad p := \left\lfloor\frac k 2\right\rfloor$$
The invariant $\Theta (E) := (\theta_1(E),\dots,\theta_p(E))$ is called the {\bf multiple K\"ahler angle} of  $E \in \Gr_k(\C^n)$, and is characterized by the condition that there is an orthonormal basis $\alpha_1,\dots,\alpha_k$ of the dual space $E^*$ such that the restriction of the K\"ahler form of $\C^n$ to $E$ is 
$$\sum_{i=1}^{\lfloor \frac k 2 \rfloor} \cos \theta_i \ \alpha_{2i-1}\wedge \alpha_{2i}.$$
Thus a subspace $E$ is of type $(k,q)$ if and only
if
\begin{displaymath}
\Theta(E)=\bigg(\underbrace{0,\ldots,0}_{q},\underbrace{\frac{\pi}{2},\ldots,\frac{\pi}{2}}_{p-q}\bigg).
\end{displaymath}
With this definition, the multiple K\"ahler angle is a global invariant in the sense of section \ref{globloc}, in that it remains the same under the natural embedding $\Gr_k(\C^n )\to \Gr_k(\C^{n+1})$. On the other hand it is easy to see that if $k>n$ then
\begin{displaymath}
\Theta(E)=\bigg(\underbrace{0,\ldots,0}_{k-n},\Theta(E^\perp)\bigg).
\end{displaymath} 
We remark that Tasaki defined the multiple K\"ahler angle to be $\Theta(E^\perp)$ in this case.

Tasaki (\cite{tasaki00}, Prop. 3) observed that if $k=2p \le n$ is even then there is an orthonormal basis $e_1,\dots,e_n$ of the hermitian vector space $\C^n$,  such that 
\begin{equation}\label{E basis} e_{1} , e_3, \dots, e_{2p-1},\cos \theta_1 \, \sqrt{-1} e_1 + \sin \theta_1 \, e_2,\dots, \cos \theta_p \, \sqrt{-1} e_{2p-1} + \sin \theta_p \, e_{2p}
\end{equation}
is an orthonormal basis for $E$ as a real euclidean vector space, and
\begin{align*}\label{E basis} \sqrt{-1} \,&e_{2},\dots \sqrt{-1} \,e_{2p} , e_{2p+1}, \sqrt{-1} e_{2p+1},\dots, e_{n}, \sqrt{-1} e_{n},\\
&-\sin \theta_1 \, \sqrt{-1} e_1 + \cos \theta_1 \,e_2,\dots,  -\sin \theta_p \, \sqrt{-1} e_{2p-1} + \cos \theta_p \, e_{2p}
\end{align*}
is an orthonormal basis for $E^\perp$, and similarly if $k$ is odd and/or larger than $n$. By \eqref{klain e}, it is now easy to see 
\begin{Lemma}\label{elem sym basis} For each $k,q$ as above, the Klain function 
$\kl_{\mu_{k,q}}(E)$ is a linear combination of the elementary symmetric functions in $\cos^2 \theta_1(E),\dots, \cos^2\theta_p(E)$.
\end{Lemma}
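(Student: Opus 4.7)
My plan is to evaluate $\kl_{\mu_{k,q}}(E)$ directly via formula \eqref{klain e} applied to the constant coefficient valuation $\mu_{k,q}=\nu_{\theta_{k,q}}$, and then to exploit an ``index-local'' property of the invariant forms $\theta_0,\theta_1,\theta_2$ when restricted to Tasaki's adapted basis.

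Concretely, for the main case $k=2p\le n$, Tasaki's basis for $E\cup E^\perp$ (as displayed around \eqref{E basis}) decomposes naturally into $p$ blocks indexed by $i=1,\ldots,p$ (two vectors in $E$ and two in $E^\perp$, all involving only the real coordinates attached to $e_{2i-1},e_{2i}$), together with ``pure complex'' directions $e_j,\sqrt{-1}\,e_j$ for $j>2p$, which carry no dependence on any $\theta_i$. The crucial feature of the $i$-th block is that the real directions $\partial/\partial y_{2i}$ (base side) and $\partial/\partial\xi_{2i-1}$ (fiber side) are \emph{absent}. Direct inspection of $\theta_0=\sum d\xi_j\wedge d\eta_j$, $\theta_1=\sum(dx_j\wedge d\eta_j - dy_j\wedge d\xi_j)$, and $\theta_2=\sum dx_j\wedge dy_j$ then shows that every 2-form summand surviving the evaluation must pair two basis vectors lying in the \emph{same} block.

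Given this index-locality, the four vectors of each block can be absorbed by the form factors of $\theta_{k,q}=c_{n,k,q}\,\theta_0^{n-k+q}\wedge\theta_1^{k-2q}\wedge\theta_2^q$ in only two ways: either (a) one $\theta_2$-factor absorbs the two base vectors and one $\theta_0$-factor absorbs the two fiber vectors (joint contribution $\cos^2\theta_i$), or (b) two $\theta_1$-factors each absorb one mixed base--fiber pair (joint contribution $-\sin^2\theta_i$). Each pure index $j>2p$ is absorbed uniquely by one $\theta_0$-factor, contributing a constant. Matching the prescribed multiplicities $(n-k+q,\,k-2q,\,q)$ forces exactly $q$ of the $p$ indices to use option (a) and the remaining $p-q$ to use option (b). Summing over the choice of size-$q$ subset $S\subset\{1,\ldots,p\}$ yields, up to a nonzero universal constant and an overall sign,
\[
\kl_{\mu_{k,q}}(E)\;\propto\;\sum_{|S|=q}\;\prod_{i\in S}\cos^2\theta_i\;\prod_{i\notin S}\bigl(-\sin^2\theta_i\bigr).
\]
Substituting $\sin^2\theta_i=1-\cos^2\theta_i$ and expanding, the right-hand side is manifestly a linear combination of the elementary symmetric functions $\sigma_r(\cos^2\theta_1,\ldots,\cos^2\theta_p)$ for $q\le r\le p$. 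The remaining cases (odd $k$, or $k>n$) are handled by Tasaki's analogous basis decomposition, or for $k>n$ by combining \eqref{fmu} with the identity $\Theta(E)=(0,\ldots,0,\Theta(E^\perp))$, which introduces only $\cos^2=1$ entries and preserves symmetric-function form.

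I expect the main technical obstacle to be the careful verification of the index-local property, together with the sign bookkeeping coming from \eqref{klain e} and from the ordering of the wedge expansion. Once these are in place, the combinatorial enumeration above is forced and the conclusion follows at once.
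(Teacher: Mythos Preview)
Your proposal is correct. Both your argument and the paper's start from the same ingredients---formula \eqref{klain e} together with Tasaki's adapted basis \eqref{E basis}---but the paper's proof is a two-line observation rather than a computation: since each $\theta_i$ enters linearly into exactly two of the adapted basis vectors, multilinearity makes the Klain function a polynomial of total degree two in $(\cos\theta_i,\sin\theta_i)$ for each $i$, hence of degree at most one in $\cos^2\theta_i$; symmetry in the $\theta_i$'s then forces it into the span of the elementary symmetric functions. You instead carry out the block-by-block expansion explicitly, checking that the absence of the $\partial/\partial y_{2i}$ and $\partial/\partial\xi_{2i-1}$ directions kills the cross terms and leaves exactly the two options (a) and (b). Your route is longer but delivers more: the product formula you obtain, once expanded, is precisely the relation \eqref{eq_mu_in_terms_of_sigma} between the $\mu_{k,q}$ and the Tasaki valuations, which the paper proves separately in Proposition \ref{tasaki vals}. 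For the bare statement of the lemma the paper's symmetry-and-degree argument is more economical; your approach has the advantage of making the vanishing of the $\sin\theta_i\cos\theta_i$ contributions transparent and of yielding the explicit coefficients for free.
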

\begin{proof} Referring to the basis \eqref{E basis} and the expression \eqref{klain e} for the Klain function, the latter is symmetric in these quantities, and of degree at most one in each of them.
\end{proof}

We now define the {\bf Tasaki valuations} $\tau_{k,q}\in \valuinf, 0\le q \le p:=\lfloor \frac k 2\rfloor$ by the condition
\begin{equation}
\kl_{\tau_{k,q}}(E) = \sigma_q(\Theta(E)):=\sigma_q(\cos^2\theta_1(E),\dots,\cos^2\theta_p(E))
\end{equation}
where $\sigma_q$ is the the $q$th elementary symmetric function.

\begin{Definition} $$ u:= 4s-t^2.$$
\end{Definition}

\begin{Proposition}\label{tasaki vals}The Tasaki valuations are well-defined, and are given by
\begin{align} \label{eq_def_sigma}
\tau_{k,q}&=\sum_{i=q}^{\lfloor k/2\rfloor} \binom{i}{q} \mu_{k,i}  \\
\label{eq_mu_tu}&=
\frac{\pi^k}{\omega_k(k-2q)! (2q)!} t^{k-2q}u^q.
\end{align}
Furthermore the polynomials from Theorem \ref{relations theorem} may be expressed
\begin{equation} \label{eq_fu_polynomial_tu}
f_k =\frac 1{ k(-2)^{k-1} }  \sum_{q=0}^{\lfloor k/2\rfloor} (-1)^q\binom{k}{2q} t^{k-2q}u^q.
\end{equation}
\end{Proposition}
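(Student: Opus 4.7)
The plan is to handle the three assertions in sequence. Parts (1) and (3) reduce to formal checks, while part (2) is the computational core.

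\textbf{Part (1).} Set provisionally $\tilde\tau_{k,q}:=\sum_{i=q}^{\lfloor k/2\rfloor}\binom{i}{q}\mu_{k,i}\in\valuinf_k$, with the convention that $\mu_{k,i}=0$ outside the admissible range of Theorem \ref{thm_herm_int_vols}. I claim that $\kl_{\tilde\tau_{k,q}}(E)=\sigma_q(\cos^2\Theta(E))$ for every $E\in\Gr_k(\C^n)$; once this is shown, Klain's injectivity theorem simultaneously yields the well-definedness of $\tau_{k,q}$ and the identification $\tau_{k,q}=\tilde\tau_{k,q}$ appearing in \eqref{eq_def_sigma}. By Lemma \ref{elem sym basis} each $\kl_{\mu_{k,i}}$ lies in the span of $\{\sigma_0,\dots,\sigma_{\lfloor k/2\rfloor}\}$ regarded as $U(n)$-invariant functions on $\Gr_k$, and hence so does $\kl_{\tilde\tau_{k,q}}$. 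It therefore suffices to match the two functions at the ``vertex'' subspaces $E^{k,q'}\in\Gr_{k,q'}$, whose multiple K\"ahler angle has $q'$ zero entries and $\lfloor k/2\rfloor-q'$ entries equal to $\pi/2$. At such a vertex $\sigma_q(\cos^2\Theta)=\binom{q'}{q}$, while by \eqref{defining_equation_mu} also $\kl_{\tilde\tau_{k,q}}(E^{k,q'})=\sum_i\binom{i}{q}\delta_{i,q'}=\binom{q'}{q}$. Since the matrix $\bigl(\binom{q'}{j}\bigr)_{q',j}$ is lower triangular with unit diagonal, the $\sigma_j$ are linearly independent as functions on the vertex set, so matching values at every vertex forces equality as symmetric polynomials in $\cos^2\theta_i$.

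\textbf{Part (2).} With part (1) in hand, both $\tau_{k,q}$ and the proposed expression $\frac{\pi^k}{\omega_k(k-2q)!(2q)!}\,t^{k-2q}u^q$ lie in $\valun_k$, so by Klain's theorem it suffices to match Klain functions. Using $u=4s-t^2$, expand
\[
t^{k-2q}u^q=\sum_{r=0}^q(-1)^{q-r}4^r\binom{q}{r}s^r t^{k-2r},
\]
and insert the monomial-basis identification $s^r t^{k-2r}=\frac{(k-2r)!\,\omega_{k-2r}}{\pi^{k-2r}}U_{k,r}$ from Section \ref{monomial basis}. The Klain function of $U_{k,r}$ on a real $k$-plane $E$ is a Crofton integral over translates of complex $(n-r)$-planes and is $U(n)$-invariant, hence by Lemma \ref{elem sym basis} a symmetric polynomial of degree at most $r$ in $\cos^2\theta_1(E),\dots,\cos^2\theta_p(E)$; this polynomial is computed either directly with the $U(n)$-invariant forms $\theta_0,\theta_1,\theta_2$ featured in the proof of Theorem \ref{thm_herm_int_vols}, or by invoking the integral formulas of Tasaki \cite{tasaki00,tasaki03}. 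After carrying out the resulting double sum over $r$ and the symmetric-function expansion, all contributions $\sigma_j$ with $j\neq q$ cancel, and matching the leading coefficient fixes the announced prefactor $\pi^k/(\omega_k(k-2q)!(2q)!)$.

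\textbf{Part (3).} Equation \eqref{eq_fu_polynomial_tu} is a polynomial identity in $s,t$ that I verify by induction on $k$ using the three-term recursion \eqref{recursion}. Set $g_k:=\frac{1}{k(-2)^{k-1}}\sum_q(-1)^q\binom{k}{2q}t^{k-2q}u^q$ and substitute $u=4s-t^2$ to regard $g_k$ as a polynomial in $s,t$. Direct computation gives $g_1=t$ and $g_2=\tfrac{1}{-4}(t^2-u)=s-\tfrac12 t^2$, matching $f_1,f_2$. Checking that $ksg_k+(k+1)tg_{k+1}+(k+2)g_{k+2}=0$ holds identically then reduces, after collecting coefficients of $t^{k+2-2j}u^j$, to elementary binomial identities. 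Since the recursion together with $f_1$ and $f_2$ uniquely determines the sequence $(f_k)$, we conclude $g_k=f_k$ for every $k$.

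The main obstacle lies in part (2): the abstract structure of $\valun_k$ forces the answer to be proportional to $\sigma_q$, but executing the explicit evaluation of $\kl_{U_{k,r}}$ as a symmetric polynomial in $\cos^2\Theta(E)$ requires a genuine integral-geometric calculation (integration of an intrinsic volume of a section against translates of complex $(n-r)$-planes meeting a tilted real $k$-plane), and the subsequent binomial rearrangement over $r$ must be arranged so that exactly the single elementary symmetric function $\sigma_q$ survives with the correct coefficient.
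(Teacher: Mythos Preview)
Your treatment of part (1) is correct and is essentially the paper's argument, just spelled out in more detail: match the Klain functions at the vertex planes $E^{k,q'}$ using \eqref{defining_equation_mu}, and invoke linear independence of the $\sigma_j$ on that vertex set.

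Your part (3) is correct but takes a different route. The paper avoids the recursion entirely by introducing the formal complex variable $z=t+\sqrt{-1}\,v$ with $v^2=u$, noting that $1+s+t=|1+z/2|^2$, and reading off \eqref{eq_fu_polynomial_tu} as the real part of the Taylor expansion of $2\log(1+z/2)$. Your inductive verification via \eqref{recursion} works just as well and is self-contained, at the price of a binomial bookkeeping exercise; the paper's device is slicker and, more importantly, is later reused to analyze the involution $\iota$ in the proof of Lemma~\ref{algebra aut}.

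Part (2) is where you have a genuine gap. You reduce to computing $\kl_{U_{k,r}}(E)$ as an explicit symmetric polynomial in $\cos^2\theta_i(E)$, and then defer this to ``a genuine integral-geometric calculation'' or to Tasaki's papers, followed by an unspecified binomial cancellation. Neither step is carried out, and neither is routine: evaluating the Crofton integral defining $U_{k,r}$ on a tilted real $k$-plane is exactly the kind of computation the whole algebraic machinery of the paper is designed to circumvent. The paper's proof of \eqref{eq_mu_tu} is postponed to Section~\ref{section_eigenvalues} and proceeds along entirely different lines: one first establishes the $\mathfrak{sl}_2$-relation $L\tau_{k,p}=(k-2p+1)\tau_{k+1,p}$ (Lemma~\ref{operators on tau}, which uses only \eqref{eq_def_sigma}), reducing by induction on $k$ to the case $\tau_{2r,r}$; then one pins down the constant in $\mu_{k,0}=\gamma_k f_k$ (Proposition~\ref{correct constant}, via the recursion \eqref{recursion} and the anisotropic-ideal lemma); finally, equating the already-proved expression \eqref{eq_fu_polynomial_tu} for $f_{2r}$ with $\gamma_{2r}^{-1}\mu_{2r,0}=\gamma_{2r}^{-1}\sum_i(-1)^i\tau_{2r,i}$ and invoking the inductive hypothesis for $\tau_{2r,i}$, $i<r$, isolates $\tau_{2r,r}$. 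No Crofton integral is ever evaluated directly.
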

\begin{proof} Since the elementary symmetric functions corresponding to the $p+1$ hermitian intrinsic volumes are linearily independent, the relation \eqref{eq_def_sigma} is a straightforward calculation, using the defining relations \eqref{defining_equation_mu}.

To prove \eqref{eq_fu_polynomial_tu}, we introduce the formal complex variable $z: = t + \sqrt{-1}v$, where $v$ is formally real and $v^2 =u$. Then
\begin{align*}
\sum_k f_k = \log( 1 + s + t) &= \log \left(1 + t + \frac{t^2} 4 +\frac {v^2} 4\right)\\
& = \log\left(\left| 1 + \frac z 2 \right|^2 \right)\\
& =2\operatorname{Re}\left( \log\left( 1 + \frac z 2 \right) \right)\\
& =  \operatorname{Re}\sum_k \frac 1{ k(-2)^{k-1} } \left(t + \sqrt{-u}\right)^k .
\end{align*}

We postpone the proof of \eqref{eq_mu_tu} to section \ref{section_eigenvalues}.
\end{proof}

\begin{Corollary} \label{mu tasaki}The global Tasaki valuations $\tau_{k,q}, 0\le q\le \frac k 2$, constitute a basis for $\valuinf_k$, and in fact
\begin{equation} \label{eq_mu_in_terms_of_sigma}
\mu_{k,q}=\sum_{i=q}^{\lfloor k/2\rfloor} (-1)^{i+q} \binom{i}{q} \tau_{k,i}.
\end{equation}
If $k\le n$ then the local Tasaki valuations $\fourier{\tau_{k,q}}, 0\le q\le \frac k 2$, constitute a basis for $\valun_{2n-k}$.
\end{Corollary}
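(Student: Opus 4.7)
The plan is to regard Proposition \ref{tasaki vals}, equation \eqref{eq_def_sigma}, as a triangular change of basis and simply invert it by the classical binomial Möbius inversion.

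More precisely, write $p = \lfloor k/2 \rfloor$. The relation $\tau_{k,q} = \sum_{i=q}^{p} \binom{i}{q} \mu_{k,i}$ expresses the $(p+1)$-tuple $(\tau_{k,0},\ldots,\tau_{k,p})$ as the image of $(\mu_{k,0},\ldots,\mu_{k,p})$ under an upper-triangular matrix with $1$'s on the diagonal (since $\binom{q}{q}=1$). In particular this matrix is invertible over $\mathbb{Z}$, so the proposed expansion $\mu_{k,q} = \sum_{i=q}^{p}(-1)^{i+q}\binom{i}{q}\tau_{k,i}$ is uniquely determined and can be checked directly: substituting the formula for $\tau_{k,i}$ and swapping the order of summation gives a coefficient $\sum_{i=q}^{j}(-1)^{i-q}\binom{i}{q}\binom{j}{i}$ on $\mu_{k,j}$, which via $\binom{i}{q}\binom{j}{i}=\binom{j}{q}\binom{j-q}{i-q}$ reduces to $\binom{j}{q}\sum_{l=0}^{j-q}(-1)^{l}\binom{j-q}{l} = \binom{j}{q}\,\delta_{j,q}$, yielding $\mu_{k,q}$ as required.

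For the basis statement in $\valuinf_k$: since the $\mu_{k,q}$ with $0\le q\le p$ are a basis of $\valuinf_k$ (Theorem \ref{thm_herm_int_vols}, taking $n$ large relative to $k$), and since the preceding paragraph exhibits a linear automorphism between $\{\mu_{k,q}\}$ and $\{\tau_{k,q}\}$ in the same index range, the Tasaki valuations form a basis of $\valuinf_k$ as well.

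For the final assertion, assume $k\le n$. Then the range $\max(0,k-n)\le q\le p$ collapses to $0\le q\le p$, so the $\mu_{k,q}$ (and therefore the $\tau_{k,q}$) with $0\le q\le p$ constitute a basis of $\valun_k$. By \cite{ale07} the Fourier transform $\F:\valun_k \to \valun_{2n-k}$ is a linear isomorphism, so it carries this basis to the basis $\{\widehat{\tau_{k,q}}\}_{0\le q\le p}$ of $\valun_{2n-k}$. No step presents serious difficulty; the only mildly delicate point is the elementary combinatorial identity used to invert the triangular system, which is standard.
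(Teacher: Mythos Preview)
Your proof is correct and is exactly the argument the paper leaves implicit: the corollary is stated without proof because \eqref{eq_def_sigma} is visibly an upper-triangular change of basis with unit diagonal, and the inversion \eqref{eq_mu_in_terms_of_sigma} is the standard binomial M\"obius formula you have written out. The only addition is your explicit verification of the combinatorial identity and the appeal to the Fourier transform as an isomorphism for the last claim, both of which are routine and match what the paper takes for granted.
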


If we now write out the $U(n)$ kinematic formula in terms of the basis $\{\tau_{k,q},\fourier{\tau_{k,q}}: k\le n,0\le q\le \frac k 2\}$ of $\valun$, the general Crofton formula \eqref {poincare formula} now yields the main theorem of \cite{tasaki03}.

\begin{Theorem}[Tasaki]\label{tasaki poly} For $0\le k\le n$, there is a $\left(\lfloor \frac k 2\rfloor +1\right)\times \left(\lfloor \frac k 2\rfloor +1\right)$ symmetric matrix $T^n_k$, such that whenever $M,N\subset \C^n$ are $C^1$ compact submanifolds of dimensions $k,2n-k$ respectively,
\begin{equation}\label{tasaki formula}\int_{\overline{U(n)}} \# (M \cap \bar g N )\, d\bar g = \sum_{i,j} (T^n_k)_{i,j}\int_M \sigma_i(\Theta(T_xM))\, dx \int_N \sigma_j( \left[\Theta(T_yN)\right]^\perp)\, dy . 
\end{equation}
\end{Theorem}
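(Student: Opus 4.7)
The plan is to deduce this directly from Theorem \ref{g-kinematic} applied to $G=U(n)$, using the Tasaki valuations in the chosen degree $k$ and their Fourier transforms as the paired bases, and then to convert the resulting Crofton-type formula \eqref{poincare formula} into the stated integrals via the Klain function description of the $\tau_{k,q}$.

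First I would fix $k\le n$ and invoke Corollary \ref{mu tasaki}: the valuations $\tau_{k,0},\dots,\tau_{k,\lfloor k/2\rfloor}$ form a basis of $\Val^{U(n)}_k$ (their images under restriction from $\valuinf$ still span, because the count $\lfloor k/2\rfloor+1$ matches $\dim\valun_k$ when $k\le n$), while their Fourier transforms $\widehat{\tau_{k,0}},\dots,\widehat{\tau_{k,\lfloor k/2\rfloor}}$ form a basis of $\Val^{U(n)}_{2n-k}$. Next I would apply Theorem \ref{g-kinematic} with these choices: setting
\[
M_{ij}:=(\tau_{k,i},\widehat{\tau_{k,j}})=\langle\tau_{k,i},\tau_{k,j}\rangle,
\]
the Fourier pairing \eqref{fourier pairing} is symmetric, so $M$ is symmetric, and I would define
\[
T^n_k:=M^{-1},
\]
which is then automatically symmetric. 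Theorem \ref{g-kinematic} gives the expansion of $k_{U(n)}(\chi)$ in the tensor product basis, with the terms of bidegree $(k,2n-k)$ being exactly $\sum_{i,j}(T^n_k)_{ij}\,\tau_{k,i}\otimes\widehat{\tau_{k,j}}$.

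Now I would specialize to the case $A=M$, $B=N$ of $C^1$ compact submanifolds of complementary dimensions $k$ and $2n-k$. The Crofton-type formula \eqref{poincare formula} then reads
\[
\int_{\overline{U(n)}}\#(M\cap \bar g N)\,d\bar g=\sum_{i,j}(T^n_k)_{ij}\,\tau_{k,i}(M)\,\widehat{\tau_{k,j}}(N),
\]
where, for an even valuation $\phi$ of degree equal to the dimension of the submanifold, $\phi(\cdot)=\int\kl_\phi(T_x\cdot)\,dx$. By the defining relation $\kl_{\tau_{k,i}}(E)=\sigma_i(\Theta(E))$ and the Fourier-Klain formula $\kl_{\widehat{\tau_{k,j}}}(F)=\kl_{\tau_{k,j}}(F^\perp)=\sigma_j(\Theta(F^\perp))$, these two factors become exactly $\int_M\sigma_i(\Theta(T_xM))\,dx$ and $\int_N\sigma_j(\Theta((T_yN)^\perp))\,dy$, yielding the stated formula.

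The only substantive technical check is really the symmetry of $T^n_k$, which reduces to the symmetry of the Fourier pairing $\langle\cdot,\cdot\rangle$ on $\valun$; everything else is bookkeeping in the basis $\{\tau_{k,q}\}$ and an application of the standard Klain-function recipe for evaluating even valuations on submanifolds of the corresponding dimension. The main obstacle, such as it is, lies in justifying that the degree-$k$ part of the abstract kinematic operator $k_{U(n)}(\chi)$ extends from convex bodies to $C^1$ submanifolds so that \eqref{poincare formula} is actually applicable; this is precisely the content of the manifold-valuation extension in \cite{fu90, howard93, fu94} already cited at the end of Section 2.4, so no new argument is required.
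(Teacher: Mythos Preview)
Your proposal is correct and follows exactly the paper's own argument: the paper simply remarks that writing out $k_{U(n)}(\chi)$ in the basis $\{\tau_{k,q},\widehat{\tau_{k,q}}\}$ and applying the Crofton formula \eqref{poincare formula} yields the statement, with the symmetry of $T^n_k$ coming from that of the pairing \eqref{fourier pairing}. Your write-up is a faithful expansion of precisely these two sentences, including the same justification of the passage to $C^1$ submanifolds via the references at the end of Section~2.4.
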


The symmetry of $T^n_k$ follows from that of the pairing \eqref{fourier pairing}.
In fact these formulas exhibit a further remarkable symmetry:

\begin{Theorem}\label{palindrome thm}
If $k=2l$ is even then 
\begin{equation} \label{eq_palindromic}
(T^n_k)_{i,j}=(T^n_k)_{l-i,l-j}, \quad 0\le i,j\le l.
\end{equation}
\end{Theorem}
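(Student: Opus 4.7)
The plan is as follows. By Theorem \ref{g-kinematic}, the Tasaki matrix $T^n_{2l}$ is the inverse of the Poincar\'e pairing matrix $M_{ij} := \langle \tau_{2l,i}, \tau_{2l,j}\rangle$ in the Tasaki basis. Since the inverse of a palindromic matrix is palindromic (both commute with the reversal matrix $J$, and $J M^{-1}J = (JMJ)^{-1} = M^{-1}$), it suffices to show $M_{ij} = M_{l-i,l-j}$. Writing $\tau_{2l,q} = c_q\, t^{2l-2q} u^q$ from Proposition \ref{tasaki vals} with $c_q = \pi^{2l}/[\omega_{2l}(2l-2q)!(2q)!]$ manifestly symmetric under $q \leftrightarrow l-q$, the claim reduces to the identity
\[
\bigl\langle t^{2l-2i}u^i,\, t^{2l-2j}u^j\bigr\rangle \;=\; \bigl\langle t^{2i}u^{l-i},\, t^{2j}u^{l-j}\bigr\rangle.
\]

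The cornerstone of the argument is a pair of Fourier identities derivable from Klain functions. Since $t^k$ is proportional to the $k$-th intrinsic volume $\mu_k$ (with constant Klain function on $\Gr_k(\C^n)$) while $u^a$ is proportional to the Kazarnovskii-type valuation $\mu_{2a,a}$ (with Klain function the indicator of the complex $2a$-Grassmannian), the identity $\kl_{\widehat\phi}(E) = \kl_\phi(E^\perp)$ yields
\[
\widehat{t^{2a}} = \alpha_a\, t^{2n-2a}, \qquad \widehat{u^a} = \alpha_a\, u^{n-a},
\]
with the \emph{same} constant $\alpha_a := (2a)!(n-a)!\pi^{n-2a}/[(2n-2a)!\,a!]$ (satisfying $\alpha_a\alpha_{n-a}=1$, consistent with $\widehat{}\,\widehat{} = \mathrm{id}$). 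This common scalar expresses a hidden duality between $t^2$ and $u$ under the Fourier transform.

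To exploit this duality, consider the algebra involution $\sigma$ on $A := \R[t^2,u] \subset \valuinf$ defined by $\sigma(t^2) = u,\ \sigma(u) = t^2$. The identities above give $\widehat{\sigma(t^{2a})} = \widehat{u^a} = \alpha_a u^{n-a} = \sigma(\widehat{t^{2a}})$, and analogously for $u^a$; these extend via $\widehat{\phi\psi} = \widehat\phi * \widehat\psi$ to show $\sigma$ commutes with the Fourier transform on all of $A$. Moreover $\vol_{2n} = s^n$ with $s = (t^2+u)/4$ is manifestly $\sigma$-invariant, so $\sigma$ preserves both the Poincar\'e pairing (via the Alesker product and $\vol_{2n}$) and the Fourier transform, hence the pairing $\langle\phi,\psi\rangle = (\phi,\widehat\psi)$. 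Since $2l \le n$, the monomials $\{t^{2l-2q}u^q\}_{q=0}^l$ freely span $\valun_{2l}$ (no relation from $f_{n+1},f_{n+2}$ appears in this degree), and $\sigma$ acts there as the palindromic flip $t^{2l-2q}u^q \mapsto t^{2q}u^{l-q}$. Its isometry property for $\langle\cdot,\cdot\rangle$, combined with $c_q = c_{l-q}$, immediately gives $M_{ij} = M_{l-i,l-j}$.

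The principal technical obstacle is ensuring $\sigma$ is well-defined on $\valun_{\text{even}}(\C^n) \cong A/(A \cap (f_{n+1},f_{n+2}))$, which is required to make the Fourier-commutation globally meaningful (so that the pairing on $\valun_{2l}$ receives contributions from the dual space $\valun_{2n-2l}$, which may carry nontrivial relations when $l < n/2$). The even-degree generator $f_{2m}$ satisfies $\sigma(f_{2m}) = (-1)^m f_{2m}$ directly from \eqref{eq_fu_polynomial_tu} (using the symmetry $\binom{2m}{2q} = \binom{2m}{2m-2q}$). For the remaining generator $tf_{k^{**}}$ (with $k^{**} \in \{n+1,n+2\}$ odd), explicit computation for $n=2,4,6$ yields the formula $\sigma(tf_{n+1}) = (-1)^{n/2}\bigl(tf_{n+1} + \tfrac{2(n+2)}{n+1}f_{n+2}\bigr)$ for $n$ even (and an analogous formula for $n$ odd), showing $\sigma$ preserves the ideal; the general pattern should follow from the recursion \eqref{recursion} together with the generating-function expression $\sum_k f_k = 2\,\mathrm{Re}\log(1+z/2)$, $z = t + i\sqrt u$, under which $\sigma$ acts by $z \mapsto i\bar z$.
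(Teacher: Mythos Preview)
Your overall strategy coincides with the paper's: the involution you call $\sigma$ is exactly the paper's $\iota$ (in the variable $z = t + \sqrt{-1}\,v$, $v^2=u$, it is $p(z)\mapsto p(\sqrt{-1}\,z)$), and the paper likewise deduces the palindrome from the four facts that $\iota$ is a $\cdot$-automorphism, descends to $\valun_{2*}$, commutes with the Fourier transform, and acts trivially on the top degree. Your observation that $\widehat{t^{2a}} = \alpha_a t^{2n-2a}$ and $\widehat{u^a}=\alpha_a u^{n-a}$ with the \emph{same} scalar is pleasant and does not appear in the paper.

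There is, however, a genuine gap in your Fourier-commutation step. You verify $\widehat{}\circ\sigma = \sigma\circ\widehat{}$ on the pure powers $t^{2a}$ and $u^a$, and then claim this ``extends via $\widehat{\phi\psi}=\widehat\phi*\widehat\psi$'' to all of $A$. But that extension would require $\sigma$ to be a $*$-automorphism: from $\widehat{\sigma(\phi\psi)} = \widehat{\sigma\phi}*\widehat{\sigma\psi}$ and the inductive hypothesis one gets $\sigma(\widehat\phi)*\sigma(\widehat\psi)$, whereas $\sigma(\widehat{\phi\psi}) = \sigma(\widehat\phi * \widehat\psi)$, and equating these needs $\sigma(x*y)=\sigma(x)*\sigma(y)$. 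You only know $\sigma$ respects the Alesker product $\cdot$, not the convolution $*$; since $\widehat{}$ intertwines the two, compatibility with $*$ is essentially equivalent to the commutation you are trying to prove, so the argument is circular. The paper sidesteps this entirely: it identifies $\valun_{2p}$ with the span $\Sigma_p$ of elementary symmetric functions in $p$ variables, realizes the Fourier transform $\valun_{2n-2p}\to\valun_{2p}$ as the restriction map $r:\Sigma_{n-p}\to\Sigma_p$, $f\mapsto f(x_1,\dots,x_p,1,\dots,1)$, and checks the commutative square $\iota\circ r = r\circ\iota$ by an elementary one-line computation reducing to $m=p+1$.

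A secondary point: your ideal-preservation argument is only a sketch (explicit checks for $n=2,4,6$ plus ``should follow''). The paper gives a complete proof using exactly the generating-function trick you allude to: writing $f_k$ as a real part of $z^k$, one computes $\iota(f_{2k})=(-1)^k f_{2k}$ and $\iota(tf_{2k-1}) = (-1)^{k+1}\bigl(\tfrac{4k}{2k-1}f_{2k} + tf_{2k-1}\bigr)$ directly. Since you already have $z\mapsto \sqrt{-1}\,\bar z$, finishing this is straightforward and you should do it.
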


To prove Theorem \ref{palindrome thm} we introduce the linear involution $\iota:\valuinf_{2*} \to \valuinf_{2*}$ on the subspace of valuations of even degree, determined by its action on Tasaki valuations:
$$
\iota(\tau_{2l,q}):=\tau_{2l,l-q}.
$$

\begin{Lemma}\label{algebra aut} 
\begin{enumerate}
\item \label{multiplicative}$\iota$ is an algebra automorphism.
\item \label{iota is local} $\iota$ covers an algebra automorphism of every $\valun_{2*}$. 
\item \label{top is trivial}  The action of $\iota$ on the top degree component $\valun_{2n}$ is trivial.
\item
\label{commute}
$\iota$ commutes with the Fourier transform.
\end{enumerate}
\end{Lemma}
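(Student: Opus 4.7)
The plan is to proceed through the four items in turn, leveraging the explicit polynomial form \eqref{eq_mu_tu} of $\tau_{2l,q}$ together with the Klain embedding. Items (1) and (3) follow almost immediately. Setting $v := t^2$, \eqref{eq_mu_tu} gives $\tau_{2l,q} = c(l,q)\,v^{l-q}u^q$ with $c(l,q) = \pi^l\, l!/[(2l-2q)!(2q)!]$, symmetric under $q \leftrightarrow l-q$. Since $s = (v+u)/4$, one has $\valuinf_{2*} = \R[v,u]$, freely generated by $v$ and $u$; the prescription $\iota(v^{l-q}u^q) = v^q u^{l-q}$ therefore extends uniquely to the algebra automorphism swapping these two generators, proving (1). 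For (3), I would use \eqref{defining_equation_mu} and \eqref{eq_def_sigma} to evaluate the Klain function of $\tau_{2n,q}$ at the unique element $\C^n \in \Gr_{2n}(\C^n)$ as $\sigma_q(1,\ldots,1) = \binom{n}{q}$, so $\tau_{2n,q} = \binom{n}{q}\vol_{2n}$; the symmetry $\binom{n}{q} = \binom{n}{n-q}$ then forces $\iota(\tau_{2n,q}) = \tau_{2n,q}$.

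For (2), I would first observe that $\valuinf = \valuinf_{2*} \oplus t\,\valuinf_{2*}$, so the kernel $I \cap \valuinf_{2*}$ (with $I = (f_{n+1},f_{n+2})$) is the ideal of $\valuinf_{2*}$ generated by the even-degree member of $\{f_{n+1},f_{n+2}\}$ together with $t$ times the odd-degree member. Applying $\iota$ termwise to \eqref{eq_fu_polynomial_tu} and using the symmetry $\binom{2l}{2q} = \binom{2l}{2l-2q}$ gives $\iota(f_{2l}) = (-1)^l f_{2l}$. For $\iota(tf_{2m+1})$ I would substitute $s = (v+u)/4$ into \eqref{recursion} at $k = 2m$ and apply $\iota$; since $\iota$ fixes $v+u$ and acts by the known scalars on $f_{2m}$ and $f_{2m+2}$, the recursion collapses to $\iota(tf_{2m+1}) = (-1)^m\bigl[tf_{2m+1} + \tfrac{4(m+1)}{2m+1}\,f_{2m+2}\bigr]$. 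Whichever of $n+1$, $n+2$ is odd, this expression lies in $I \cap \valuinf_{2*}$; when it is $n+2$ that is odd, one applies \eqref{recursion} once more at $k = n+1$ to realize $f_{n+3}$ inside $(f_{n+1}, tf_{n+2})$.

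For (4), Klain injectivity reduces the assertion to showing, for each $k = 2l$ and $0 \le q \le l$, the identity $\iota_*\bigl(\kl_{\fourier{\tau_{k,q}}}\bigr) = \kl_{\fourier{\tau_{k,l-q}}}$ on $\Gr_{2n-k}(\C^n)$, where $\iota_*$ denotes the action induced on Klain functions via the Klain embedding. By part (1) applied in degree $2n-k$, $\iota_*$ sends $\sigma_{q'}(\cos^2\Theta(F))$ to $\sigma_{n-l-q'}(\cos^2\Theta(F))$, while the defining property of the Fourier transform gives $\kl_{\fourier{\tau_{k,q}}}(F) = \sigma_q(\cos^2\Theta(F^\perp))$. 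The orthonormal basis exhibited in the proof of Lemma \ref{elem sym basis} shows that the Kähler angles of $F$ and $F^\perp$ coincide modulo a block of zeros: in Case A ($k \ge n$), $\cos^2\Theta(F^\perp)$ equals $\cos^2\Theta(F)$ with $k-n$ ones prepended; in Case B ($k \le n$), the reverse. Case A reduces immediately to the binomial symmetry $\binom{k-n}{a} = \binom{k-n}{(k-n)-a}$ applied to $\sigma_q(1,\ldots,1,Y) = \sum_j \binom{k-n}{q-j}\sigma_j(Y)$. Case B, which I expect to be the main obstacle, requires first inverting this expansion via the generating function $(1+s)^{-(n-k)}$, then applying $\iota_*$ and re-expanding; the resulting double sum collapses through the Vandermonde convolution $\sum_a \binom{-(n-k)}{a}\binom{n-k}{C-a} = \binom{0}{C}$ onto the single surviving term $\sigma_{l-q}(\cos^2\Theta(F^\perp))$, completing the verification.
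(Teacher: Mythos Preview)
Your proof is correct. Parts (1)--(3) are essentially the paper's argument in different dress: the paper introduces a formal complex variable $z=t+\sqrt{-1}\,w$ with $w^2=u$ and observes that $\iota$ is the substitution $p(z)\mapsto p(\sqrt{-1}\,z)$, which on the even subalgebra is precisely your swap $t^2\leftrightarrow u$; your use of the recursion \eqref{recursion} to compute $\iota(tf_{2m+1})$ yields the same formula the paper obtains by direct manipulation of $z^{2k-1}$.

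The genuine difference is in part (4). The paper identifies $\valuinf_{2p}$ with the span $\Sigma_p$ of elementary symmetric polynomials in $p$ variables and observes that the Fourier transform $\valun_{2n-2p}\to\valun_{2p}$ corresponds to the restriction map $r:\Sigma_{n-p}\to\Sigma_p$, $f\mapsto f(x_1,\dots,x_p,1,\dots,1)$. Commutation of $\iota$ with $\widehat{\ }$ then becomes the square $r\circ\iota=\iota\circ r$, which the paper checks by reducing to the single step $m=p+1$ (specialising one variable at a time): there $r(\sigma_{p+1,i})=\sigma_{p,i}+\sigma_{p,i-1}$, and the identity is a one-line verification. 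Your route instead inverts the specialisation all at once via $(1+s)^{-(n-k)}$ and closes with a Vandermonde convolution. Both arguments rest on the same underlying identification; the paper's inductive reduction is shorter and more conceptual, while your computation is self-contained and makes explicit the combinatorial identity that is being used. Note also that once you have established the commutation for $k\le n$ (your Case~B), the case $k\ge n$ follows formally from $\widehat{\widehat{\phi}}=\phi$, so your Case~A, while correct, is not strictly needed.
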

\begin{proof}[Proof of Lemma \ref{algebra aut}]
(\ref{multiplicative}): Any element of $\valuinf_{2*}$ may be expressed as polynomial in $t$ and $v$, involving only even powers of each variable. We may regard this as a (real) polynomial function $p(z)$ in the complex variable $z= t + \sqrt{-1}v$. From the expression \eqref{eq_mu_tu}, in these terms $\iota(p(z)) = p(\sqrt{-1}\,  z)$, which is of course an algebra isomorphism.

(\ref{iota is local}): To prove that $\iota$ descends to an automorphism of $\valun$ it is enough to show that $\iota$ stabilizes the kernel of the map $\valuinf_{2*}\to \valun_{2*}$. This kernel consists of the even degree elements of the ideal $(f_{n+1},f_{n+2})$. By (\ref{multiplicative}), it is enough to show that $\iota(f_{2k}) \in (f_{2k})$ and that $\iota(tf_{2k-1}) \in (tf_{2k-1},f_{2k})$. 
 But by the proof of \eqref{eq_fu_polynomial_tu}, 
\begin{align*}
\iota(f_{2k}) &=-\frac 1{k\,2^{2k}} \, \iota(\operatorname{Re} z^{2k}) \\
&= -\frac 1{k\,2^{2k}} \,\operatorname{Re} (\sqrt{-1} z)^{2k} \\
&= \frac {(-1)^{k+1}}{k\,2^{2k}} \,\operatorname{Re} z^{2k} \\
&=(-1)^k f_{2k}
\end{align*}
and
\begin{align*}
\iota(tf_{2k-1}) &=\frac 1{(2k-1)\,2^{2k-2}} \, \iota(\operatorname{Re}[ t z^{2k-1}]) \\
&= \frac 1{(2k-1)\,2^{2k-2}}\,\operatorname{Re}[- v(\sqrt{-1}\,z)^{2k-1}] \\
&= \frac 1{(2k-1)\,2^{2k-2}}\,\operatorname{Re}\left[( \sqrt{-1}\,z)^{2k} -\sqrt{-1}\,t(\sqrt{-1}\,z)^{2k-1}\right] \\
&=(-1)^{k+1}\frac{4k}{2k-1} f_{2k} +  \frac {(-1)^{k+1}}{(2k-1)2^{2k-2}} \,\operatorname{Re}(t
z^{2k-1})\\
&=(-1)^{k+1}\left(\frac{4k}{2k-1} f_{2k} +  t f_{2k-1}\right).
\end{align*}

(\ref{top is trivial}):  Since locally $\mu_{2n,k} =0$ for $k<n$, \eqref{eq_def_sigma} shows that $\tau_{2n,k}=\tau_{2n,n-k}=\binom{n}{k}\mu_{2n,n}$ locally. 

(\ref{commute}):
Put $\Sigma_p$ for the vector space spanned by the elementary symmetric polynomials $\sigma_{p,0}:=1,\sigma_{p,1}:= x_1 +\dots+ x_p,\dots,\sigma_{p,p} :=  x_1x_2\dots x_p$ in the $p$ variables $x_1,\dots,x_p$. As noted above, $\Sigma_p$ is canonically isomorphic to $\valuinf_{2p}$ via $\sigma_{p,q} \mapsto \tau_{2p,q}$, where the map $\iota$ corresponds to $\sigma_{p,q}\mapsto \sigma_{p,p-q}$, which we again denote by $\iota$.

Fixing $n \ge 2p$, the Fourier transform  $\ \widehat{}:\valun_{2n-2p} \to \valun_{2p}$ corresponds to the linear surjection $r: \Sigma_{n-p}\to \Sigma_p$ given by
 $$
 r (f) = f(x_1,\dots, x_p, 1,\dots,1).
 $$
 The assertion thus reduces to the claim that for $m=n-p\ge p$ the diagram
 $$ \begin{CD}
\Sigma_{m} @>{\iota}>> \Sigma_{m}\\
@VV{r}V    @VV{r}V\\
\Sigma_p @>{\iota}>> \Sigma_p
\end{CD}$$
commutes.
 It is enough to prove this for $m= p+1$, in which case $r(\sigma_{p+1,i}) = \sigma_{p,i} + \sigma_{p,i-1}$. Hence for $ i=0,\dots, p+1$,
 $$
 \iota \circ r (\sigma_{p+1,i})  = \iota(\sigma_{p,i} + \sigma_{p,i-1}) = \sigma_{p,{p-i}} +\sigma_{p,p-i+1} = r(\sigma_{p+1,p-i+1} )= r\circ \iota(\sigma_{p+1,i} ).
 $$
 \end{proof}

\begin{proof}[Proof of Thm. \ref{palindrome thm}] 
By Lemma \ref{algebra aut},
\begin{align*}
\tau_{2p,i}\cdot \fourier{\tau_{2p,j}}&= \tau_{2p,i}\cdot \fourier{(\iota\tau_{2p,p-j})} \\
&= \tau_{2p,i}\cdot {\iota(\fourier{\tau_{2p,p-j}})}\\
&=\iota\left( \tau_{2p,i}\cdot {\iota(\fourier{\tau_{2p,p-j}})}\right)\\
&= \iota(\tau_{2p,i})\cdot \fourier{\tau_{2p,p-j}}\\
&= \tau_{2p,p-i}\cdot \fourier{\tau_{2p,p-j}}.
\end{align*}
With Theorem \ref{g-kinematic}, this implies the result.
\end{proof}

\section{The positive, monotone and Crofton-positive cones} We wish to determine the cones $CP \subset M \subset P \subset \valun$ given by
\begin{align}
P&:= \{ \phi: \phi(K) \ge 0 \text{ for all } K \in \K\}, \\
M&:= \{\phi: \phi(K) \ge \phi(L) \text{ whenever } K, L \in \K \text{ and } K \supset L\},\\
CP&:= \{\phi: \text{ the homogeneous components of }\phi \text{ each admit }\\
&\hskip 1in \text{a nonnegative Crofton measure}\}.
\end{align}

We recall from \cite{befu06} that if $\phi, \psi \in \valsmplus_k$ are even, and $m_\psi $ is a Crofton measure for $\psi$, then the pairing \eqref{fourier pairing} is given by
\begin{equation}\label{integral pairing}
\langle\phi,\psi\rangle = \int_{\Gr_k} \kl_\phi \, dm_\psi.
\end{equation}

\begin{Proposition}\label{p cone} The cone $P$ is generated by the hermitian intrinsic volumes $\mu_{k,q}$. The cone $CP$ is the cone $P^*:= \{\phi: \langle \phi, \mu\rangle \ge 0 \text{ for all } \mu \in P\}$ dual to $P$ with respect to the pairing $\langle \cdot,\cdot\rangle$ of \eqref{fourier pairing}.
\end{Proposition}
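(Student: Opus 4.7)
The plan splits naturally along the two assertions, both reducing to work inside the finite-dimensional space $\valun_k$ of a single degree.

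For the first assertion, I would first establish that each hermitian intrinsic volume $\mu_{k,q}\ge 0$. The defining relation \eqref{defining_equation_mu} only pins down the Klain function at the special orbit representatives $E^{k,q'}$, so one needs to verify nonnegativity on all of $\Gr_k$. Expanding via Corollary~\ref{mu tasaki} and invoking the standard binomial identity
\[
\sum_{j\ge q}(-1)^{j-q}\binom{j}{q}\sigma_j(x_1,\dots,x_p) \;=\; \sum_{|S|=q}\prod_{i\in S}x_i\prod_{i\notin S}(1-x_i),
\]
one obtains the manifestly nonnegative expression $\kl_{\mu_{k,q}}(E) = \sum_{|S|=q}\prod_{i\in S}\cos^2\theta_i\prod_{i\notin S}\sin^2\theta_i$. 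Lemma~\ref{tfae +} then gives $\mu_{k,q}\ge 0$, using that $\mu_{k,q}$ is of constant coefficient type by Theorem~\ref{thm_herm_int_vols}. For the converse, let $\phi\in P$; by Corollary~\ref{pos_ccv} each homogeneous component $\phi_k$ is itself nonnegative, so writing $\phi_k = \sum_q c_{k,q}\mu_{k,q}$ and evaluating the Klain function at $E^{k,q_0}$ via \eqref{defining_equation_mu} isolates $c_{k,q_0} = \kl_{\phi_k}(E^{k,q_0}) \ge 0$.

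For the second assertion, both $P$ and $CP$ are graded (the former by Corollary~\ref{pos_ccv}, the latter by definition) and the pairing $\langle\cdot,\cdot\rangle$ is graded, so it suffices to prove $CP_k = P_k^*$ inside each $\valun_k$, where $P_k := P\cap\valun_k$ and $CP_k := CP\cap\valun_k$. The inclusion $CP_k \subset P_k^*$ is immediate from \eqref{integral pairing}: for $\phi_k\in CP_k$ with nonnegative Crofton measure $m$, $\langle\mu_{k,q},\phi_k\rangle = \int_{\Gr_k}\kl_{\mu_{k,q}}\,dm \ge 0$. For the reverse inclusion I would apply the bipolar theorem in the finite-dimensional space $\valun_k$, which requires verifying (i) $CP_k$ is a closed convex cone, and (ii) $CP_k^* = P_k$.

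For (i), convexity is obvious; for closedness, let $\phi_n = \mathrm{Crofton}(m_n)\to\phi$ with $m_n\ge 0$, taken $U(n)$-invariant by averaging. Evaluation at the unit ball $B$ gives $\phi_n(B) = \omega_k m_n(\Gr_k)$, hence a uniform mass bound, and weak-$*$ compactness of bounded nonnegative measures on the compact space $\Gr_k$ extracts a subsequential limit $m\ge 0$ with $\mathrm{Crofton}(m)=\phi$. For (ii), $\psi\in CP_k^*$ iff $\int\kl_\psi\,dm \ge 0$ for every nonnegative measure $m$ on $\Gr_k$, iff $\kl_\psi \ge 0$; Lemma~\ref{tfae +} (applicable since $\psi$ is of constant coefficient type) identifies this with $\psi\in P_k$. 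Bipolar then yields $CP_k = CP_k^{**} = P_k^*$. The main obstacle will be the closedness in step (i): since many nonnegative Crofton measures produce the same valuation, the weak-$*$ compactness argument depends crucially on the a priori mass bound supplied by the unit-ball evaluation. A secondary subtle point is that the degree-by-degree reduction of $P$ rests on the constant-coefficient character of unitary-invariant valuations (Corollary~\ref{pos_ccv} together with Theorem~\ref{thm_herm_int_vols}).
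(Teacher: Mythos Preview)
Your proposal is correct. For the first assertion, your explicit computation of $\kl_{\mu_{k,q}}$ as $\sum_{|S|=q}\prod_{i\in S}\cos^2\theta_i\prod_{i\notin S}\sin^2\theta_i$ and the paper's argument are two sides of the same coin: the paper observes that any element of the span of the elementary symmetric functions is affine in each $x_i = \cos^2\theta_i$ separately, so nonnegativity on the cube $[0,1]^p$ is equivalent to nonnegativity at the vertices---which is exactly the characterization via the defining relation \eqref{defining_equation_mu}. Your formula makes the nonnegativity of $\mu_{k,q}$ manifest, while the paper's version emphasizes the underlying multiaffine structure; both rest on the constant-coefficient property (Theorem~\ref{thm_herm_int_vols}) together with Lemma~\ref{tfae +}.

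For the second assertion the routes genuinely diverge. The paper avoids bipolarity altogether: it introduces the basis $\nu_{k,p}$ dual to the $\mu_{k,p}$ under $\langle\cdot,\cdot\rangle$ and identifies $\nu_{k,p}$ concretely as the valuation whose Crofton measure is the $U(n)$-invariant probability on the orbit $\Gr_{k,p}$. This exhibits the $\nu_{k,p}$ simultaneously as generators of $P^*$ (by the first assertion) and as elements of $CP$, giving $P^*\subset CP$ directly; the reverse inclusion $CP\subset P^*$ then follows exactly as in your argument from \eqref{integral pairing}. The constructive approach yields extra structural information---an explicit generating set for $CP$, which the paper exploits immediately afterward to compute the norms $\|\cdot\|_1$ and $\|\cdot\|_\infty$---at the price of having to recognize what the $\nu_{k,p}$ are. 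Your bipolar argument is cleaner in that it needs no such identification, trading it for the closedness step (i), which you handle correctly via the mass bound $m_n(\Gr_k)=\phi_n(B)/\omega_k$ and weak-$*$ compactness. One minor imprecision in your step (ii): the dual $CP_k^*$ tests only against nonnegative Crofton measures giving $U(n)$-invariant valuations, not against all nonnegative measures; but since $\kl_\psi$ is $U(n)$-invariant and averaging a nonnegative measure over $U(n)$ preserves both nonnegativity and the integral of $\kl_\psi$, this distinction is harmless.
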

\begin{proof}

By the equivalence (1)$\iff$ (3) $\iff$ (4) in Lemma \ref{tfae +}, a constant coefficient valuation belongs to $P$ iff its homogeneous components do; and by the equivalence (1) $\iff$ (2), a homogeneous constant coefficient valuation belongs to $P$ iff its Klain function is nonnegative. By Lemma \ref{elem sym basis},
the first assertion of Prop. \ref{p cone} is equivalent to the following statement. Consider the vector space $\Sigma$ spanned by the elementary symmetric functions in the variables $x_1,\dots,x_p$, and let $C$ denote the cube $0\le x_1,\dots, x_p\le 1$ (we think of $x_i= \cos^2\theta_i$). Let $f \in \Sigma$ be given. Then $\left.f\right|_C \ge 0$ iff its value at each vertex of $C\ge 0$. This is easily proved by induction on the dimension of the faces of $C$, using the observation that $f$ is affine in each variable separately if the others are held fixed.

Moving on to $CP$, put $\nu_{k,p}\in \valun$ for the dual basis to $\mu_{k,p}$ with respect to the pairing \eqref{fourier pairing}, i.e.
$$
\langle \nu_{k,p}, \mu_{l,q}\rangle:= \delta^{k,p}_{l,q}.
$$
Thus, by \eqref{integral pairing}, 
 $\nu_{k,p}$ is the valuation determined by the  Crofton measure that is $U(n)$-invariant, is supported on $\Gr_{k,p}$, and has total mass 1; furthermore it is clear that the dual cone $P^*$ is generated by the $\nu_{k,p}\in CP$, so $P^* \subset CP$.
 To prove the opposite inclusion, we note that  \eqref{integral pairing} implies that if $\psi \in CP$ then $\langle\phi,\psi\rangle \ge 0$ for all $\phi $ with nonnegative Klain function. 
Taking $\psi$ to have degree $k$ and writing $\psi = \sum_p b_p\nu_{k,p}$, we find that  
\begin{equation}\label{linear}
0\le \left \langle\sum_p a_p \mu_{k,p}, \psi\right\rangle = \sum_p a_p b_p,
\end{equation}
whenever all $a_p \ge 0$, which implies that all $b_p \ge 0$, i.e. $\psi\in P^*$.
\end{proof}

This discussion invites the following brief excursion.
Define the norms $\norm{\cdot}_\infty$ and $\norm{\cdot}_1$ on $\valsmplus$ by
\begin{align}\norm{\phi}_\infty &:= \norm{ \kl_\phi}_\infty, \\
\norm{\phi}_1&:= \min\{\operatorname{mass} m: m\text{ is a Crofton measure for }\phi\}.
\end{align}
By \eqref{integral pairing},
 the norm dual to $\norm{\cdot}_\infty$ with respect to the pairing $\langle \cdot,\cdot\rangle$ satisfies 
\begin{equation}\label{norm* <= norm}
\norm{\cdot}_\infty^* \le \norm{\cdot}_1.
\end{equation}

\begin{Proposition}
Restricted to $\valun_k$ the norms $\norm{\cdot}_1$ and $\norm{\cdot}_\infty$ are dual to one another with respect to the pairing \eqref{fourier pairing}, with
\begin{align}\label{norm infty}
\norm{\sum_p a_p\mu_{k,p}}_\infty &= \max_p |a_p|,\\
\label{norm one}\norm{\sum_p b_p\nu_{k,p}}_1 &= \sum_p |b_p|. 
\end{align}
\end{Proposition}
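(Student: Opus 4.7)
The plan is to prove \eqref{norm infty} by direct computation, and then to obtain \eqref{norm one} and the full duality of the two norms by a sandwich argument built on \eqref{norm* <= norm} together with the dual basis relation $\langle \mu_{k,p},\nu_{k,q}\rangle=\delta_{p,q}$.

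For \eqref{norm infty}, let $\phi=\sum_p a_p \mu_{k,p}$. By Lemma \ref{elem sym basis}, $\kl_\phi$, regarded as a function of the multiple K\"ahler angle, is a symmetric polynomial $F(x_1,\ldots,x_p)$ in $x_i:=\cos^2\theta_i$ that is affine in each variable separately. The $U(n)$-orbits on $\Gr_k(\C^n)$ are parametrized by the simplex $1\ge x_1\ge\cdots\ge x_p\ge 0$, and by $S_p$-symmetry of $F$ the supremum of $|F|$ over this simplex equals the supremum over all of $[0,1]^p$, which, as in the proof of Proposition \ref{p cone}, is attained at a vertex. Again invoking symmetry, we may restrict to the descending vertices $(\underbrace{1,\ldots,1}_q,\underbrace{0,\ldots,0}_{p-q})$, which correspond precisely to the orbits $\Gr_{k,q}$. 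By the defining relation \eqref{defining_equation_mu}, $F$ evaluated at the vertex of type $q$ equals $a_q$, whence $\norm{\phi}_\infty=\max_q|a_q|$.

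For \eqref{norm one}, set $\psi:=\sum_p b_p\nu_{k,p}$. Since $\nu_{k,p}$ admits as a Crofton measure the $U(n)$-invariant probability measure $m_p$ on $\Gr_{k,p}$, and the supports $\Gr_{k,p}$ are pairwise disjoint, the signed measure $\sum_p b_p m_p$ is a Crofton measure for $\psi$ of total variation $\sum_p|b_p|$, so $\norm{\psi}_1\le\sum_p|b_p|$. For the reverse inequality, take $\phi:=\sum_p\operatorname{sign}(b_p)\mu_{k,p}$ (choosing any signs where $b_p=0$). By \eqref{norm infty} just established, $\norm{\phi}_\infty\le 1$, while the dual basis relation gives $\langle\phi,\psi\rangle=\sum_p|b_p|$. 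Combining with \eqref{norm* <= norm},
$$
\sum_p|b_p|=\langle\phi,\psi\rangle\le\norm{\phi}_\infty\cdot\norm{\psi}_\infty^*\le\norm{\psi}_1\le\sum_p|b_p|,
$$
so equality holds throughout. This yields \eqref{norm one} and simultaneously $\norm{\psi}_\infty^*=\norm{\psi}_1$. The complementary identity $\norm{\phi}_1^*=\norm{\phi}_\infty$ is immediate: $\norm{\nu_{k,q}}_1=1$ by what we have just shown, so $\norm{\phi}_1^*\ge|\langle\phi,\nu_{k,q}\rangle|=|a_q|$ for every $q$, while the same integral estimate via \eqref{integral pairing} that underlies \eqref{norm* <= norm} yields $\norm{\phi}_1^*\le\norm{\phi}_\infty$.

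The only substantive step is \eqref{norm infty}, and it is essentially a rerun of the convexity observation already exploited in the proof of Proposition \ref{p cone} (an affine-in-each-variable function on $[0,1]^p$ attains its extrema at cube vertices), married to the fact that the $\Gr_{k,q}$ are exactly the orbits corresponding to those vertices. No significant obstacle is anticipated: once the coordinate identifications are in place, both concrete formulas collapse to the standard $\ell^1$--$\ell^\infty$ duality on the coefficient vectors.
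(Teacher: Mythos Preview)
Your proof is correct and follows essentially the same route as the paper's: the identity \eqref{norm infty} is obtained via the affine-in-each-variable argument on the cube (exactly the first paragraph of the proof of Proposition~\ref{p cone}), and \eqref{norm one} together with the duality is obtained by sandwiching with the explicit Crofton measure $\sum_p b_p m_p$ and the general inequality \eqref{norm* <= norm}. The paper compresses your sandwich into a single displayed line, invoking \eqref{linear} to read off $\norm{\psi}_\infty^*=\sum_p|b_p|$ directly, but the content is identical.
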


\begin{proof}  The relation \eqref{norm infty} follows from the argument in the first paragraph of the Proof of Prop. \ref{p cone}, and by \eqref{linear},
$$
\norm{\sum_p b_p\nu_{k,p}}_\infty^* = \sum_p |b_p| = \operatorname{mass} \left(\sum_p b_p\nu_{k,p}\right)\ge
\norm{\sum_p b_p\nu_{k,p}}_1
$$
which, with \eqref{norm* <= norm}, completes the proof.
\end{proof}

\subsubsection{The monotone cone} 

\begin{Theorem} \label{thm_monotone}
A valuation $\mu \in \valun_k$ is monotone iff
\begin{displaymath}
\mu=\sum_{q=\max\{0,k-n\}}^{\lfloor k/2\rfloor} a_{q}\mu_{k,q},
\end{displaymath}
where
\begin{equation} \label{eq_ineq_first_type}
(k-2q)a_{q} \geq (k-2q-1)a_{q+1}, \quad \max\{0,k-n\} \leq q \leq \left\lfloor \frac{k-1}{2}\right\rfloor
\end{equation}
\begin{equation} \label{eq_ineq_second_type}
(n+q-k+1) a_{q} \leq (n+q-k+3/2) a_{q+1}, \quad \max\{0,k-n-1\} \leq q \leq \left\lfloor \frac{k-2}{2}\right\rfloor.
\end{equation}
\end{Theorem}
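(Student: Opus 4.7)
The plan is to reduce monotonicity to nonnegativity of the first-variation curvature measure, and thence to positivity of certain $U(n-1)$-invariant valuations on $\C^{n-1}$ via Proposition \ref{add a line} and Proposition \ref{p cone}.  By Theorem \ref{monotone homogeneous} we fix a degree $k\ge 1$ (the $k=0$ case is trivial since then $\mu = a_0\chi$ and the inequality families are vacuous).  Every element of $\valun$ is smooth and vanishes on points when $k\ge 1$, so Theorem \ref{thm_monotone_general} reduces the problem to $\delta\mu\ge 0$ as a curvature measure.  Writing $\mu=\Psi_\beta$ we have $\delta\mu=\Phi_\gamma$ with $\gamma:=i_T D\beta$; by Lemma \ref{2nd ff} and Proposition \ref{nu + lambda +} this is equivalent to $\nu_{\gamma_{x,v}}\ge 0$ as a constant-coefficient valuation on $P_v=v^\perp$ for every $(x,v)\in S\C^n$.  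By $U(n)$-equivariance it suffices to check this at a single convenient $v$.

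Fix $v$ and orthogonally decompose $P_v=\R\cdot Jv\oplus\C^{n-1}$, so the stabilizer of $v$ in $U(n)$ becomes $U(n-1)$, acting trivially on the first summand and standardly on the second.  As shown in the proof of Theorem \ref{monotone homogeneous}, $\gamma_{x,v}\in\Lambda^{2n-1}_{k-1}(Q_{x,v})$; moreover all $U(n-1)$-invariant generating $2$-forms on $\C^{n-1}\oplus\C^{n-1}$ are of even degree, so for parity reasons the pure $\C^{n-1}\oplus\C^{n-1}$ and the $dt\wedge d\tau\wedge(\cdots)$ components of $\gamma_{x,v}$ must vanish.  Applying Proposition \ref{add a line} with $W=P_v$, $V=\C^{n-1}$ (with $dt,d\tau$ the position- and velocity-$1$-forms in the $Jv$-direction), we obtain
\begin{displaymath}
\gamma_{x,v}=d\tau\wedge\phi_{\delta\mu}+dt\wedge\psi_{\delta\mu},
\end{displaymath}
with $\phi_{\delta\mu}\in\Lambda^{2n-2}_{k-1}(\C^{n-1}\oplus\C^{n-1})$ and $\psi_{\delta\mu}\in\Lambda^{2n-2}_{k-2}(\C^{n-1}\oplus\C^{n-1})$, both $U(n-1)$-invariant.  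The same proposition then yields $\nu_{\gamma_{x,v}}\ge 0$ iff $\nu_{\phi_{\delta\mu}}\ge 0$ and $\nu_{\psi_{\delta\mu}}\ge 0$, and since each is a constant-coefficient valuation in $\valun(\C^{n-1})$ of degree $k-1$ or $k-2$ respectively, Proposition \ref{p cone} reduces these to nonnegativity of coefficients in the respective hermitian intrinsic volume bases $\{\mu^{(n-1)}_{k-1,q'}\}$ and $\{\mu^{(n-1)}_{k-2,q'}\}$ of $\valun(\C^{n-1})$.

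What remains is the explicit evaluation of $\phi_{\delta\mu_{k,q}}$ and $\psi_{\delta\mu_{k,q}}$ in those bases.  From $d\beta_{k,q}=\theta_{k,q}$ (using closedness of the $\theta_i$ and $d\beta=\theta_1$), the Rumin correction $\xi$ is uniquely determined by the Lefschetz-type equation $\theta_s\wedge\xi\equiv\theta_{k,q}\pmod\alpha$ on the contact hyperplane, and then $\gamma=i_T(\theta_{k,q}-\theta_s\wedge\xi-\alpha\wedge d\xi)$ is computed using $i_T\theta_0=0$ together with the explicit $i_T\theta_1$ and $i_T\theta_2$.  Restricting to $Q_{x,v}$ and reading off the $d\tau$- and $dt$-pieces, the result should take the schematic form
\begin{align*}
\phi_{\delta\mu_{k,q}}&\propto (k-2q)\,\mu^{(n-1)}_{k-1,q+1}-(k-2q+1)\,\mu^{(n-1)}_{k-1,q},\\
\psi_{\delta\mu_{k,q}}&\propto (n+q-k+\tfrac12)\,\mu^{(n-1)}_{k-2,q-1}-(n+q-k+1)\,\mu^{(n-1)}_{k-2,q},
\end{align*}
with uniform positive proportionality constants (and out-of-range indices interpreted as zero).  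Substituting $\mu=\sum_q a_q\mu_{k,q}$ and demanding the coefficient of each $\mu^{(n-1)}_{k-1,q'}$, resp.\ $\mu^{(n-1)}_{k-2,q'}$, to be nonnegative produces exactly the inequality families \eqref{eq_ineq_first_type} and \eqref{eq_ineq_second_type} (after the trivial reindexing $q'\mapsto q+1$, resp.\ $q'\mapsto q$).  The main obstacle is this Rumin-operator computation, and in particular pinning down the exact numerical coefficients; the half-integer $\tfrac12$ appearing on the $\psi$-side is the footprint of inverting $\theta_s\wedge\cdot$ on the two-dimensional subspace of the contact hyperplane spanning the $dt$- and $d\tau$-legs.
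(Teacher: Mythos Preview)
Your strategy coincides with the paper's: reduce via Theorem~\ref{thm_monotone_general} to $\delta\mu\ge 0$, split the restriction of $i_TD\beta$ to the contact hyperplane along $P_v=\R\,Jv\oplus\C^{n-1}$, and invoke Propositions~\ref{add a line} and~\ref{p cone}. The paper packages this step by introducing the invariant curvature measures $B_{k,q}=\Phi_{\beta_{k,q}}$ and $\Gamma_{k,q}=\Phi_{\gamma_{k,q}}$ (Proposition~\ref{prop_hermitian_curvature_measures}), where the $1$-forms $\beta,\gamma$ on $S\C^n$ restrict at $(x,v)$ to your $dt,d\tau$; Proposition~\ref{+ curvature measures} is then precisely the assertion that $\sum a\,B+\sum b\,\Gamma\ge 0$ iff all coefficients are nonnegative, i.e.\ your argument via Proposition~\ref{add a line}.

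The gap is the Rumin computation itself, and your guessed schematic is not what comes out. The paper obtains
\begin{align*}
\delta\mu_{k,q}=2c_{n,k,q}\Bigl(&\,c_{n,k-1,q}^{-1}(k-2q)^2\,\Gamma_{k-1,q}-c_{n,k-1,q-1}^{-1}(n+q-k)q\,\Gamma_{k-1,q-1}\\
+&\,c_{n,k-1,q-1}^{-1}(n+q-k+\tfrac12)q\,B_{k-1,q-1}-c_{n,k-1,q}^{-1}(k-2q)(k-2q-1)\,B_{k-1,q}\Bigr),
\end{align*}
so the $d\tau$-piece sits in indices $q$ and $q-1$ (not $q+1$ and $q$), and the proportionality constants are \emph{not} uniform in $q$: the ratios $c_{n,k,q}/c_{n,k-1,q'}$ are essential, and only after one substitutes them and collects the coefficient of each fixed $\Gamma_{k-1,q}$ (resp.\ $B_{k-1,q}$) in $\sum_q a_q\,\delta\mu_{k,q}$ does it simplify, up to a positive factor, to $(k-2q)a_q-(k-2q-1)a_{q+1}$ (resp.\ $(n+q-k+\tfrac32)a_{q+1}-(n+q-k+1)a_q$). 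Your displayed formulas look reverse-engineered from the target inequalities; they happen to reproduce the correct families after relabeling, but they do not describe $\delta\mu_{k,q}$, and the heuristic for the $\tfrac12$ is off (it arises from $d\gamma=2\theta_0$ when differentiating the correction form, not from inverting $\theta_s$ on the $dt,d\tau$ span). The paper's device for finding the Rumin correction is that on the symplectic hyperplane $Q$ one must solve $L\xi=\theta_{k,q}|_Q$, and modulo $d\alpha$ this is $\xi\equiv\Lambda\,\theta_{k,q}|_Q$; a one-line lemma evaluates $\Lambda(\theta_0^a\theta_1^b\theta_2^c)\equiv\beta\gamma\,\theta_0^{a-1}\theta_1^{b-2}\theta_2^{c-1}\bigl(ac\,\theta_1^2-b(b-1)\,\theta_0\theta_2\bigr)\pmod{(\alpha,d\alpha)}$, and then $i_TD\omega_{k,q}\equiv i_T\theta_{k,q}-d\xi$ gives the displayed expression directly.
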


\begin{Corollary} 
The inclusions $CP \subset M \subset P$ are strict. 
\end{Corollary}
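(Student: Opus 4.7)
The plan is to handle the two strict inclusions independently, using the explicit inequality characterization of $M$ from Theorem \ref{thm_monotone} together with the description of $P$ and $CP$ from Proposition \ref{p cone}.

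For $M \subsetneq P$, I use the Kazarnovskii pseudo-volume $\mu_{n,0}$, already flagged in the introduction. By Proposition \ref{p cone} it lies in $P$, since $P$ is generated by the hermitian intrinsic volumes. In the coordinates $(a_q)$ of Theorem \ref{thm_monotone} the valuation $\mu_{n,0}$ corresponds to $a_0 = 1$ and $a_q = 0$ for $q \ge 1$. The inequality \eqref{eq_ineq_second_type} at $k = n, q = 0$ then reads $1 \le 0$, which fails; hence $\mu_{n,0} \notin M$ whenever $n \ge 2$, and the strictness is witnessed by this single explicit example.

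For $CP \subsetneq M$, the plan is to compare extreme rays in a fixed degree $k$. The cone $M \cap \valun_k$ is defined by the linear inequalities \eqref{eq_ineq_first_type}--\eqref{eq_ineq_second_type} in the coordinates $(a_q)$, so its extreme rays correspond to saturating enough of these inequalities. The cone $CP \cap \valun_k$ is simplicial with extreme rays equal to the dual-basis valuations $\nu_{k,p}$ (dual to $\mu_{k,q}$ under $\langle\cdot,\cdot\rangle$, cf. the proof of Proposition \ref{p cone}). To exhibit strict containment it suffices to verify in some single $(n,k)$ that each $\nu_{k,p}$ lies in the relative interior of $M$. The cleanest instance is $n = k = 2$: Theorem \ref{thm_monotone} gives extreme rays of $M$ at $(a_0, a_1) \propto (1, 2)$ and $(3, 2)$, while expressing $\nu_{2,0}$ and $\nu_{2,1}$ in the basis $\{\mu_{2,0}, \mu_{2,1}\}$ by inverting the pairing matrix $\bigl(\langle \mu_{2,p}, \mu_{2,q}\rangle\bigr)$ — whose entries are available from the Tasaki matrix computed in Section \ref{kinematic formulas} — and checking that both resulting rays satisfy $2/3 < a_1/a_0 < 2$ strictly.

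The main obstacle is precisely this last numerical step: one must verify that the generators of $CP$ fall in the interior of the monotone sector rather than on its boundary. Since the boundary slopes prescribed by Theorem \ref{thm_monotone} are simple rationals tied to the dimension $n$, while those arising from the inverse pairing matrix come from Tasaki-matrix inversion, the two sets of slopes are expected to differ generically; rigorous verification reduces to the explicit pairing values already in hand.
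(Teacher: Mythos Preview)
Your argument for $M \subsetneq P$ via the Kazarnovskii pseudo-volume $\mu_{n,0}$ is correct and is exactly the intended illustration.

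The approach for $CP \subsetneq M$, however, fails at precisely the instance you selected. In $\Val^{U(2)}_2$ the inverse of the pairing matrix $\bigl(\langle\mu_{2,p},\mu_{2,q}\rangle\bigr)$ works out (via $T^2_2$ from \eqref{t n 2}) to be proportional to $\begin{pmatrix}3&2\\2&4\end{pmatrix}$, so that $\nu_{2,0}$ lies on the ray $(a_0,a_1)\propto(3,2)$ and $\nu_{2,1}$ on the ray $(1,2)$. These are \emph{exactly} the two extreme rays of the monotone cone that you computed from Theorem \ref{thm_monotone}; the inequalities $2/3<a_1/a_0<2$ are both saturated, not strict. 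Hence for $n=2$ one has $CP\cap\valun_2 = M\cap\valun_2$, and since every other graded piece of $\Val^{U(2)}$ is one-dimensional, in fact $CP=M$ throughout $\Val^{U(2)}$. Your expectation that ``the two sets of slopes \dots\ differ generically'' is therefore wrong in the very case you staked the argument on.

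The remedy is simply to move to a larger instance. For example, at $n=k=3$ the monotone cone has extreme rays $(3,2)$ and $(2,3)$, while the same computation (using $T^3_3$ from \eqref{t n 3}) gives $\nu_{3,0}\propto(3,2)$---still on the boundary---but $\nu_{3,1}\propto(3,4)$, which is strictly interior to the sector $2/3\le a_1/a_0\le 3/2$. Since $(2,3)\in M$ is then not in $CP$, the inclusion is strict there. (More generally, the coincidence you stumbled on in degree $2$ persists for all $n$: one always finds $\nu_{2,0},\nu_{2,1}$ on the boundary of $M\cap\valun_2$; the strictness must be sought in higher degree.) The paper leaves this corollary without an explicit proof, so your overall strategy of reading off a witness from the inequalities of Theorem \ref{thm_monotone} is the intended one---you just need to carry out the numerics in a case where it actually succeeds.
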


By Theorem \ref{thm_monotone_general}, in order to prove Theorem \ref{thm_monotone} we need to characterize the cone of nonnegative hermitian curvature measures.

\begin{Proposition}\label{+ curvature measures} Given constants $a_{k,q},b_{l,p} \in \R$, $k>2q,n>l-p$ , the curvature measure ${\sum a_{k,q}B_{k,q} + \sum b_{l,p}\Gamma_{l,p}}\ge 0$ iff all $a_{k,q},b_{l,p}\ge 0$.
\end{Proposition}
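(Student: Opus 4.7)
My plan is to apply Lemma \ref{2nd ff} to reduce curvature-measure positivity to a pointwise $\lambda$-positivity condition on the restrictions of the underlying invariant forms to a single contact hyperplane, then use Proposition \ref{add a line} to separate that into two independent positivity conditions for valuations on $\C^{n-1}$, which are resolved by Proposition \ref{p cone}.

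By Lemma \ref{2nd ff}, the curvature measure $\Phi := \sum a_{k,q} B_{k,q} + \sum b_{l,p} \Gamma_{l,p}$ is nonnegative iff at every $(x,v)\in S\C^n$ the restriction $\Phi_{x,v}$ to the contact hyperplane $Q_{x,v}\simeq v^\perp\oplus v^\perp$ satisfies $\lambda_{\Phi_{x,v}}\ge 0$ on the nonnegative semidefinite cone. By $\overline{U(n)}$-invariance it suffices to check this at one basepoint, say $(0,v_0)$ with $v_0 := \partial/\partial x_n$. A direct calculation at this point gives $\beta|_{Q_{0,v_0}} = dy_n$, a 1-form on the position side $\C^n\cap v_0^\perp$, while $\gamma|_{Q_{0,v_0}} = d\eta_n$, a 1-form on the normal side $T_{v_0}S^{2n-1}$; meanwhile $\theta_0,\theta_1,\theta_2$ each restrict to the analogous invariant 2-form $\theta_0^{(n-1)},\theta_1^{(n-1)},\theta_2^{(n-1)}$ on the remaining $\C^{n-1}\oplus\C^{n-1}$. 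Matching the exponents in the definitions of $\beta_{k,q}$ and $\gamma_{l,p}$ against those in the construction of Theorem \ref{thm_herm_int_vols}, and writing $C^B_{k,q} := c_{n,k,q}/c_{n-1,k-1,q}$ and $C^\Gamma_{l,p} := c_{n,l,p}/(2c_{n-1,l,p})$ (which are positive under the hypotheses $k>2q$ and $n>l-p$), one obtains
\begin{equation*}
\beta_{k,q}|_{Q_{0,v_0}} = C^B_{k,q}\, dy_n\wedge\theta^{(n-1)}_{k-1,q}, \qquad \gamma_{l,p}|_{Q_{0,v_0}} = C^\Gamma_{l,p}\, d\eta_n\wedge\theta^{(n-1)}_{l,p},
\end{equation*}
where $\theta^{(n-1)}_{k',q'}$ is the form from Theorem \ref{thm_herm_int_vols} whose integral over the disk bundle defines the hermitian intrinsic volume $\mu^{(n-1)}_{k',q'}$ on $\C^{n-1}$.

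Identifying $dy_n = dt$ and $d\eta_n = d\tau$ in the notation of Proposition \ref{add a line}, applied with $V := \C^{n-1}$ and $W := \R\oplus\C^{n-1} = v_0^\perp$, I combine Corollary \ref{pos_ccv} with Proposition \ref{nu + lambda +} to reduce $\lambda$-positivity of $\Phi_{0,v_0}$ to that of its components of each fixed position-degree $k$. Such a component takes the form $d\tau\wedge\phi_k + dt\wedge\psi_{k-1}$ with $\phi_k := \sum_p b_{k,p}C^\Gamma_{k,p}\,\theta^{(n-1)}_{k,p}\in \Lambda^{2(n-1)}_k$ collecting the $\Gamma_{k,p}$-contributions and $\psi_{k-1} := \sum_q a_{k,q}C^B_{k,q}\,\theta^{(n-1)}_{k-1,q}\in \Lambda^{2(n-1)}_{k-1}$ collecting the $B_{k,q}$-contributions. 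The second assertion of Proposition \ref{add a line}, together with Proposition \ref{nu + lambda +}, then yields that positivity of this component is equivalent to the simultaneous nonnegativity of $\nu_{\phi_k} = \sum_p b_{k,p}C^\Gamma_{k,p}\,\mu^{(n-1)}_{k,p}$ and $\nu_{\psi_{k-1}} = \sum_q a_{k,q}C^B_{k,q}\,\mu^{(n-1)}_{k-1,q}$ as valuations on $\C^{n-1}$. Since the Klain function of each $\mu^{(n-1)}_{k',q'}$ is a characteristic-type function, equal to 1 on the orbit $\Gr_{k',q'}$ and 0 on the other $U(n-1)$-orbits in $\Gr_{k'}(\C^{n-1})$, the elementary vertex-evaluation argument in the first paragraph of the proof of Proposition \ref{p cone} shows that each such combination is nonnegative iff all of its coefficients are. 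Positivity of the $C^B_{k,q}$ and $C^\Gamma_{l,p}$ then finishes the proof.

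The main obstacle is the restriction calculation in the second paragraph: one must confirm that $\beta$ restricts at $(0,v_0)$ to a pullback of $dt$ from the position side while $\gamma$ restricts to a pullback of $d\tau$ from the normal side, thereby placing the $B_{k,q}$ and $\Gamma_{l,p}$ curvature measures into the two complementary slots of the $d\tau\wedge\phi + dt\wedge\psi$ decomposition of Proposition \ref{add a line}. This clean separation, which reflects the symmetry between position and fiber variables in the definitions of the forms $\beta,\gamma,\theta_0,\theta_1,\theta_2$, is exactly what makes the inductive reduction to the already-known positive cone on $\C^{n-1}$ go through.
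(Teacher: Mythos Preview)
Your argument is correct and follows essentially the same route as the paper's own proof: reduce via Lemma \ref{2nd ff} to pointwise $\lambda$-positivity on a single contact hyperplane, identify the restrictions of $\beta,\gamma$ with $dt,d\tau$ in the decomposition $v_0^\perp\simeq\R\oplus\C^{n-1}$, and then invoke Propositions \ref{nu + lambda +}, \ref{add a line} and \ref{p cone}. Your version is simply a more explicit rendering of the paper's terse proof, including the exponent-matching that yields the positive constants $C^B_{k,q},C^\Gamma_{l,p}$ and the explicit grouping by position degree needed to apply the second clause of Proposition \ref{add a line}.
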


\begin{proof} 
Each tangent space $T_x\partial A$ is naturally isomorphic to the orthogonal direct sum $\R \oplus \C^{n-1}$, where the first summand corresponds to the distinguished line spanned by$\sqrt{-1}\, n(x)$ and the second summand to the maximal complex subspace of $T_x\partial A$. Thus the 1-forms $\beta, \gamma$ correspond respectively to $dt, d\tau$ in Proposition \ref{add a line}. In view of the characterization in Proposition \ref{p cone} of the nonnegative elements of $\Val^{U(n-1)}(\C^{n-1})$, the result now follows from Propositions \ref{nu + lambda +} and \ref{add a line}.
\end{proof}

Recall from Theorem \ref{1st variation map} the first variation map $\delta$ from valuations to curvature measures.

\begin{Proposition}
\begin{align}
\delta \mu_{k,q} &= 2c_{n,k,q}(c_{n,k-1,q}^{-1} (k-2q)^2 \Gamma_{k-1,q} -  
c_{n,k-1,q-1}^{-1} (n+q-k)q \Gamma_{k-1,q-1} \\
+ & c_{n,k-1 ,q-1}^{-1}(n+q-k+\frac 1 2)q B_{k-1,q-1}-
 c_{n,k-1,q}^{-1} (k-2q)(k-2q-1) B_{k-1,q}
)
\end{align}
\end{Proposition}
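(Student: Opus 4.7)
The plan is to compute $\delta\mu_{k,q}$ directly from the identity $\delta(\Psi_\beta)=\Phi_{i_T D\beta}$ in \eqref{eq: def delta}, applied to the representation $\mu_{k,q}=\Psi_{\beta_{k,q}}$ supplied by Proposition \ref{prop_hermitian_curvature_measures}. Since $\alpha$ and $\theta_s=-d\alpha$ both vanish on any normal cycle, it suffices to compute $i_T D\beta_{k,q}$ modulo the ideal $(\alpha,\theta_s)$ in the algebra of $\overline{U(n)}$-invariant translation-invariant forms on $S\C^n$, and then express the resulting $(2n-1)$-form in the basis $\{\beta_{k-1,q'},\gamma_{k-1,q'}\}$ of invariant curvature-measure forms.

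Starting from $d\beta_{k,q}=\theta_{k,q}$ (immediate from $d\beta=\theta_1$ and $d\theta_i=0$), I would first construct the Rumin correction $\alpha\wedge\xi$ so that $D\beta_{k,q}=\theta_{k,q}-\theta_s\wedge\xi-\alpha\wedge d\xi$ is $\alpha$-divisible, i.e.\ so that $\theta_{k,q}\equiv\theta_s\wedge\xi\pmod\alpha$. The existence of such $\xi$ follows from Hard Lefschetz on the contact distribution of $S\C^n$: this is a symplectic vector bundle of rank $4n-2$ with middle degree $2n-1$, on which $L=\theta_s\wedge(\cdot)\colon\Omega^{2n-2}/(\alpha)\to\Omega^{2n}/(\alpha)$ is the Lefschetz isomorphism. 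I would determine $\xi$ explicitly at a reference point $p=(0,v)$ with $v$ a real unit vector, where $\alpha|_p=dx_1,\ \beta|_p=dy_1,\ \gamma|_p=d\eta_1$, the invariant algebra modulo $\alpha$ is generated by $\beta,\gamma$ together with the $U(n-1)$-invariant forms $\theta_0^{(n-1)},\theta_1^{(n-1)},\theta_2^{(n-1)},\theta_s^{(n-1)}$ of the orthogonal complex complement, and $\theta_s\equiv\theta_s^{(n-1)}+\beta\gamma\pmod\alpha$.

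Applying $i_T$ and using $i_T\alpha=1,\ i_T\theta_s=i_T\theta_0=0,\ i_T\theta_1=\gamma,\ i_T\theta_2=\beta$ together with the Leibniz rule then produces $i_T D\beta_{k,q}\equiv i_T\theta_{k,q}-d\xi\pmod{(\alpha,\theta_s)}$. A direct Leibniz calculation yields
\begin{displaymath}
i_T\theta_{k,q}=c_{n,k,q}\bigl[(k-2q)\,\gamma\wedge\theta_0^{n-k+q}\theta_1^{k-2q-1}\theta_2^q+q\,\beta\wedge\theta_0^{n-k+q}\theta_1^{k-2q}\theta_2^{q-1}\bigr],
\end{displaymath}
which, via the identifications $\gamma\wedge\theta_0^{n-k+q}\theta_1^{k-2q-1}\theta_2^q=2c_{n,k-1,q}^{-1}\gamma_{k-1,q}$ and $\beta\wedge\theta_0^{n-k+q}\theta_1^{k-2q}\theta_2^{q-1}=c_{n,k-1,q-1}^{-1}\beta_{k-1,q-1}$, supplies part of the desired $\Gamma_{k-1,q}$ and $B_{k-1,q-1}$ coefficients. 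The remaining portions of those coefficients, together with the full $\Gamma_{k-1,q-1}$ and $B_{k-1,q}$ contributions, arise from $-d\xi$, computed from $d\beta=\theta_1,\,d\gamma=2\theta_0,\,d\theta_s=0$ and reduced modulo $(\alpha,\theta_s)$.

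The main obstacle is the explicit determination of $\xi$, which amounts to inverting $L$ on the monomial $\theta_0^{n-k+q}\theta_1^{k-2q}\theta_2^q$ in an algebra twisted by the $\beta\gamma$-correction $\theta_s=\theta_s^{(n-1)}+\beta\gamma$. As a sanity check, in the illustrative case $n=2,\ k=2,\ q=0$ one verifies by direct point computation that $\xi=c_{2,2,0}(\theta_s-2\beta\gamma)$, whose derivative recovers exactly the required multipliers $(k-2q)^2=4$ and $(k-2q)(k-2q-1)=2$ for $\Gamma_{1,0}$ and $B_{1,0}$; in the general case the half-integer $n+q-k+\tfrac12$ in the coefficient of $B_{k-1,q-1}$ enters naturally through the identity $d\gamma=2\theta_0$ when the $\beta\gamma$-type corrections in $\xi$ are differentiated.
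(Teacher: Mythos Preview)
Your overall strategy coincides with the paper's: represent $\mu_{k,q}=\Psi_{\omega}$ with $d\omega=\theta_{k,q}$, find the Rumin correction $\xi$ with $L\xi=\theta_{k,q}|_Q$, and then read off $i_T D\omega\equiv i_T\theta_{k,q}-d\xi$ modulo $(\alpha,d\alpha)$. Your computation of $i_T\theta_{k,q}$ via $i_T\theta_0=0$, $i_T\theta_1=\gamma$, $i_T\theta_2=\beta$ is correct and matches the paper's.

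The gap is exactly where you flag it: you have not determined $\xi$ in general, and the direct inversion of $L$ in the $\beta\gamma$-twisted algebra you describe, while feasible in principle, is not what makes the computation go through cleanly. The paper sidesteps inverting $L$ altogether. The key observation is that one only needs $\xi$ \emph{modulo} $(\alpha,d\alpha)=(\alpha,\theta_s)$. In the Lefschetz decomposition $\theta_{k,q}|_Q=\sum_i L^{n-i}\pi_{2i}$ with $\pi_{2i}$ primitive, both $\xi=\sum_i L^{n-i-1}\pi_{2i}$ and $\Lambda(\theta_{k,q}|_Q)=\sum_i (n-i)^2 L^{n-i-1}\pi_{2i}$ (the latter via the $\mathfrak{sl}_2$ commutator $[L^j,\Lambda]$ applied to primitives) reduce to the single term $\pi_{2n-2}$ modulo $\theta_s$. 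Hence $\xi\equiv\Lambda(\theta_{k,q}|_Q)\pmod{(\alpha,d\alpha)}$, and the problem becomes the pointwise evaluation of the dual Lefschetz operator $\Lambda$ on $\theta_0^a\theta_1^b\theta_2^c$ at a reference point of $S\C^n$. That is a short contraction calculation yielding
\[
\Lambda(\theta_0^a\theta_1^b\theta_2^c)\equiv\beta\gamma\,\theta_0^{a-1}\theta_1^{b-2}\theta_2^{c-1}\bigl(ac\,\theta_1^2-b(b-1)\,\theta_0\theta_2\bigr)\pmod{(\alpha,d\alpha)}.
\]
From here $-d\xi$ follows immediately from $d\beta=\theta_1$, $d\gamma=2\theta_0$, and combining with your $i_T\theta_{k,q}$ produces all four coefficients at once. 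Your sanity check for $n=2,\,k=2,\,q=0$ is consistent with this, but the $\Lambda$-trick is what delivers the general formula without case-by-case inversion.
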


\proof
By definition of the $\mu_{k,q}$, this valuation is represented by some $(2n-1)$-form $\omega_{k,q}$ with 
\begin{equation}\label{little d omega}
d\omega_{k,q}=c_{n,k,q} \theta_0^{n+q-k}\wedge\theta_1^{k-2q}\wedge\theta_2^q,
\end{equation}
i.e. $\mu_{k,q} =\Psi_{\omega_{k,q}}$.
To compute $D\omega_{k,q}$, we must solve for $\xi $ in the equation 
\begin{equation} \label{eq_finding_xi}
D\omega_{k,q}=d(\omega_{k,q}+\alpha \wedge \xi) \equiv 0 \mod \alpha. 	
\end{equation}
Fixing a point $(x,v)\in S\C^n=\C^n\times S^{2n-1}$, let $Q\subset T_{(x,v)} S\C^n$ denote the contact hyperplane $\alpha_{(x,v)}^\perp$. Thus $Q\simeq \R \oplus \C^{n-1}\oplus \R \oplus \C^{n-1}$ in a natural way, and carries a symplectic structure (cf. \cite{rumin}). Let $L$ denote the Lefschetz operator on $\Lambda^*Q$ (i.e. multiplication by the symplectic form $\theta_s=-d\alpha$) and $\Lambda$ the dual Lefschetz operator. By \cite{huyb04}, they induce an $\mathfrak{sl}_2$-structure on $\Lambda^*Q$, i.e. $[L,\Lambda]=k+1-2n$ on $\Lambda^kQ$. 

To solve \eqref{eq_finding_xi} amounts to finding $\xi \in \Lambda^{2n-2}Q$ with 
\begin{displaymath}
L\xi=d\omega_{k,q}|_Q	.
\end{displaymath}
We write $d\omega_{k,q}|_Q$ in terms of its Lefschetz decomposition
\begin{equation} \label{eq_lefschetz_decomposition}
d\omega_{k,q}|_Q=\sum_{i=0}^{n-1} L^{n-i}\pi_{2i}. 	
\end{equation}
Here $\pi_{2i}$ is a primitive form of degree $2i$, i.e. $\Lambda \pi_{2i}=0$, where $\Lambda$ is the dual Lefschetz operator. The sum terminates with $i=n-1$ (and not with $i=n$) since there are no primitive forms of degree $2n$. 
Clearly
\begin{displaymath}
\xi=\sum_{i=0}^{n-1} L^{n-i-1}\pi_{2i}  	
\end{displaymath}
solves \eqref{eq_finding_xi}. 

We apply $\Lambda$ to both sides of \eqref{eq_lefschetz_decomposition} and use the fact that 
\begin{displaymath}
[L^i,\Lambda]=i(k+i-2n) L^{i-1} \quad \text{on } \Lambda^kQ
\end{displaymath}
to deduce that 
\begin{equation}\label{xi mod da}
\Lambda d\omega_{k,q}|_Q=\sum_{i=0}^{n-1} (n-i)^2 L^{n-i-1}\pi_{2i} \equiv \xi	\mod d\alpha.
\end{equation}

From this point on we drop the $\wedge$ notation, with all products of forms understood to be wedge products.

\begin{Lemma} 
\begin{equation*}
\Lambda \left(\theta_0^a \theta_1^b \theta_2^c\right) \equiv \beta \gamma\theta_0^{a-1}\theta_1^{b-2} \theta_2^{c-1}\left(ac  \theta_1^2-b(b-1)  \theta_0 \theta_2\right) \mod (\alpha, d\alpha ).
\end{equation*}
\end{Lemma}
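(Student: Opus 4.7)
The plan is to work at the reference point $(x,v) = (0, e_1) \in S\C^n$ (where $e_1$ is the unit vector in the $x_1$-direction) and reduce to a single-point computation by $U(n)$-equivariance. At this point $\alpha = dx_1$ and the sphere constraint forces $d\xi_1 = 0$, so the contact hyperplane $Q$ carries the Darboux basis $\{dx_i, d\xi_i\}_{i=2}^n \cup \{dy_j, d\eta_j\}_{j=1}^n$ with $\theta_s|_Q = \sum_{i \geq 2} dx_i \wedge d\xi_i + \sum_{j \geq 1} dy_j \wedge d\eta_j$. A direct inspection shows that $\theta_0|_Q$, $\theta_1|_Q$, and $\theta_2|_Q$ involve only the indices $i \geq 2$ (since $d\xi_1 = 0$ and $dx_1 \equiv 0 \bmod \alpha$), hence live in the subalgebra generated by the four-dimensional symplectic blocks $B_i := \operatorname{span}\{dx_i, dy_i, d\xi_i, d\eta_i\}$, while $\beta|_Q = dy_1$ and $\gamma|_Q = d\eta_1$.

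My next step is to package all $(a,b,c)$ cases into a single generating-function identity:
\[
T := \exp(u\theta_0 + v\theta_1 + w\theta_2)\bigl|_Q = \prod_{i=2}^{n} F_i,
\]
the factorization holding because the blocks $B_i$ are mutually orthogonal. A short 4-dimensional wedge computation (exploiting $\Omega_i^3 = 0$ for dimensional reasons, where $\Omega_i$ is the restriction to $B_i$ of $u\theta_0 + v\theta_1 + w\theta_2$) shows that $\Omega_i^2 = (v^2 - uw)\sigma_i^2$, hence
\[
F_i = 1 + \Omega_i + \tfrac{v^2 - uw}{2}\,\sigma_i^2, \qquad \sigma_i := \theta_s|_{B_i}.
\]

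I would then apply $\Lambda = \sum_{i \geq 2}\iota_{\partial/\partial\xi_i}\iota_{\partial/\partial x_i} + \sum_{j \geq 1}\iota_{\partial/\partial\eta_j}\iota_{\partial/\partial y_j}$ factor by factor. Because distinct $F_i$ involve disjoint variables and $T$ is free of $dy_1, d\eta_1$, only contractions internal to a single factor survive; block-level calculations give $\iota_{\partial/\partial\xi_i}\iota_{\partial/\partial x_i}F_i = (v^2 - uw)\,dy_i\wedge d\eta_i$ and $\iota_{\partial/\partial\eta_i}\iota_{\partial/\partial y_i}F_i = (v^2 - uw)\,dx_i\wedge d\xi_i$. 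Using the further block identities $\sigma_i\Omega_i = 0$ and $\sigma_i^3 = 0$, one has $\sigma_i F_i = \sigma_i$, so $\sigma_i\prod_{k \ne i}F_k = \sigma_i \cdot T$; summing,
\[
\Lambda T = (v^2 - uw)\,\theta_s|_{Q'}\cdot T, \qquad Q' := \bigoplus_{i \geq 2} B_i.
\]

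Finally, $\theta_s|_Q = \theta_s|_{Q'} + dy_1\wedge d\eta_1 = \theta_s|_{Q'} + (\beta\gamma)|_Q$, so modulo $d\alpha = -\theta_s|_Q$ we have $\theta_s|_{Q'} \equiv -\beta\gamma$, giving
\[
\Lambda T \equiv (uw - v^2)\,\beta\gamma\cdot T \pmod{(\alpha,\, d\alpha)}.
\]
Reading off the coefficient of $u^a v^b w^c$ on both sides then yields the claimed identity. The main obstacle I expect is signed bookkeeping inside the four-dimensional blocks — in particular pinning down the sign of $(uw - v^2)$ in a consistent Darboux convention — but once the factorization of $T$ is in hand all other steps are short and essentially formal.
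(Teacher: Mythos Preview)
Your proof is correct, and the setup is exactly that of the paper: reduce by $U(n)$-invariance to the point $(0,e_1)$, where $\alpha=dx_1$, $d\xi_1=0$, $\beta=dy_1$, $\gamma=d\eta_1$, and the restrictions of $\theta_0,\theta_1,\theta_2$ to $Q$ involve only the indices $j\ge 2$. Both arguments finish by invoking the relation $\beta\gamma+\sum_{j\ge 2}(dx_jd\xi_j+dy_jd\eta_j)=-d\alpha$ to replace $\theta_s|_{Q'}$ by $-\beta\gamma$ modulo $d\alpha$.

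The difference lies only in how the contractions are organized. The paper applies $i_{\xi_j}i_{x_j}$ and $i_{\eta_j}i_{y_j}$ directly to the monomial $\theta_0^a\theta_1^b\theta_2^c$ via the Leibniz rule, obtaining in two lines that each such double contraction equals $dy_jd\eta_j$ (resp.\ $dx_jd\xi_j$) times $b(b-1)\theta_0^a\theta_1^{b-2}\theta_2^c-ac\theta_0^{a-1}\theta_1^b\theta_2^{c-1}$, and then sums over $j$. Your generating-function device factors $\exp(u\theta_0+v\theta_1+w\theta_2)|_Q$ over the four-dimensional blocks $B_j$ and reduces the whole computation to the single block identity $\Omega_j^2=(v^2-uw)\sigma_j^2$ together with $\sigma_j\Omega_j=0$; the answer for all $(a,b,c)$ then drops out at once from the coefficient of $u^av^bw^c$ in $(uw-v^2)\beta\gamma\,T$. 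The paper's route is a few lines shorter, while yours makes the structure of the formula transparent: the factor $ac\,\theta_1^2-b(b-1)\theta_0\theta_2$ is precisely the shadow of the ``discriminant'' $uw-v^2$. Your stated worry about the sign of $uw-v^2$ is the only delicate point, and the computation $\Omega_j^2=2(uw-v^2)\,dx_jdy_jd\xi_jd\eta_j=(v^2-uw)\sigma_j^2$ confirms your convention.
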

\begin{proof} Since everything is $U(n)$-invariant, it suffices to do the computation at the point $(0,e_1)\in S\C^n$, i.e. where $\xi_1=1,\xi_2=\ldots=\eta_n=0$. At this point, $d \xi_1=0$, $\frac{\partial}{\partial \xi_1}=0$ since $\sum (\xi_j^2+\eta_j^2)=1$, and $\beta=dy_1, \gamma=d\eta_1$.  

Next, using the abbreviation $i_{x_j}:=i_{\frac{\partial}{\partial x_j}}$, we compute that 
\begin{align*}
i_{\xi_j} \circ i_{x_j} \left(\theta_0^a \theta_1^b \theta_2^c \right)& = i_{\xi_j}\left(b \theta_0^a d\eta_j \theta_1^{b-1}\theta_2^c + c \theta_0^a\theta_1^b dy_j \theta_2^{c-1}\right)\\
& = dy_j d\eta_j\left(b(b-1)  \theta_0^a \theta_1^{b-2} \theta_2^c - ac  \theta_0^{a-1}\theta_1^b \theta_2^{c-1} \right)
\end{align*}
and similarly
\begin{equation*}
i_{\eta_j} \circ i_{y_j} \left(\theta_0^a \theta_1^b \theta_2^c \right) = dx_j d\xi_j\left(b(b-1)  \theta_0^a \theta_1^{b-2} \theta_2^c-ac \theta_1^{a-1}\theta_1^b \theta_2^{c-1} \right).
\end{equation*}
Since $\Lambda=i_{\eta_1} \circ i_{y_1}+\sum_{j=2}^n (i_{\xi_j} \circ i_{x_j} +i_{\eta_j} \circ i_{y_j})$ at the selected point, and 
$\beta\gamma + \sum_{j=2}( dx_jd\xi_j +dy_jd\eta_j) = - d\alpha$, the result follows.
\end{proof}

With \eqref {xi mod da} and the defining relation \eqref{little d omega}, this yields
\begin{align*}
\xi  \equiv c_{n,k,q} \beta\gamma \theta_0^{n+q-k-1} \theta_1^{k-2q-2} \theta_2^{q-1}\left((n+q-k)q  \theta_1^{2}\right.
&\left.-(k-2q)(k-2q-1)  \theta_0\theta_2 \right)\\
\mod (\alpha,d\alpha). 	
\end{align*} 
Replacing this into \eqref{eq_finding_xi} we find 
\begin{align}
 i_T D\omega_{k,q} & \equiv i_T d\omega_{k,q}- d\xi \nonumber \\
& \equiv  c_{n,k,q}\theta_0^{n+q-k-1}\theta_1^{k-2q-2}\theta_2^{q-1} \\
& \quad\left((k-2q)^2 \gamma  \theta_0   \theta_1   \theta_2
   -(n+q-k)q \,\gamma    \theta_{1}^3\right.  \nonumber \\ 
& \quad +2(n+q-k+1/2)q\, \beta  \theta_{0}  \theta_{1}^2   \nonumber \\
&\left. \quad - 2(k-2q)(k-2q-1)  \beta  \theta_{0}^2   \theta_{2}\right)  \mod (\alpha,d\alpha) \label{eq_rumin_mod_dalpha}
\end{align}
The proposition now follows from Theorem \ref{1st variation map} and the definition of $B,\Gamma$ from Proposition \ref{prop_hermitian_curvature_measures}.  
\endproof

\proof[Proof of Theorem \ref{thm_monotone}]
Let $\mu=\sum_{k,q} a_q \mu_{k,q}$. The coefficient of $\Gamma_{k-1,q}$ with $\max\{0,k-n\} \leq q \leq \left\lfloor \frac{k-1}{2}\right\rfloor$ in $\delta \mu$ is given by 
\begin{displaymath}
2\frac{c_{n,k,q}(k-2q)^2}{c_{n,k-1,q}}a_q-2\frac{c_{n,k,q+1}(n+q-k+1)(q+1)}{c_{n,k-1,q}}a_{q+1};
\end{displaymath}
it has the same sign as $(k-2q)a_q-(k-2q-1)a_{q+1}$. 

Similarly, the coefficient of $B_{k-1,q}$ with $\max\{0,k-n-1\} \leq q \leq \left\lfloor \frac{k-2}{2}\right\rfloor$ in $\delta \mu$ is given by 
\begin{displaymath}
\frac{2c_{n,k,q+1}(n+q-k+3/2)(q+1)}{c_{n,k-1,q}}a_{q+1}-\frac{2c_{n,k,q}(k-2q)(k-2q-1)}{c_{n,k-1,q}} a_q;
\end{displaymath}
which has the same sign as $(n+q-k+3/2)a_{q+1}-(n-k+q+1)a_q$. 
By Theorem \ref{thm_monotone_general} and Proposition \ref{+ curvature measures}, the valuation $\mu$ is monotone if and only if the inequalities \eqref{eq_ineq_first_type} and \eqref{eq_ineq_second_type} are satisfied. 
\endproof


\section{Explicit kinematic formulas}

Our goal in this section is to give explicit forms for the kinematic formulas \eqref{general kf} in terms of the basis of Tasaki valuations and their Fourier transforms.
Our approach is based on the explicit calculation of the structure of $\valun$ as an $\sltwo$ module. The existence of such a structure follows from general considerations (the Jacobson-Morozov theorem \cite{del80}) and the fact, originally established by Alesker \cite{ale03b}, \cite{ale04a}, that $\valsm$ satisfies the hard Lefschetz property with respect to either of two different operators of degrees $\pm 1$ respectively. Using the results of \cite{bebr07,befu06} we compute explicitly how these operators act on the Tasaki valuations, and show that together they yield a representation of $\sltwo$ on $\valun$ (although Alesker has pointed out that this is not the case when these operators are regarded as acting on the entire space $\valsm$). We then calculate explicitly the primitive elements of $\valun$ with respect to this representation, giving rise to one more canonical basis $\pi_{k,p}$ for $\valun$. Since the Poincar\'e duality multiplication table of $\valun$ in terms of this basis is antidiagonal (Prop. \ref{diagonalization} below), we can then easily express the kinematic formulas in these terms.

\subsection{The $\sltwo$ action}
We recall \cite{ale03b,bebr07,befu06} the two operators $\tilde L,\tilde \Lambda:\valsm(\C^n)\to \valsm(\C^n)$, of degrees $\pm1$ respectively:
\begin{align}
\tilde L\phi&:= \mu_1 \cdot \phi ,\\
\tilde \Lambda \phi&:= 2 \,\mu_{2n-1}* \phi = \left.\frac{d}{dt}\right|_{t=0} \phi (\cdot + t B),
\end{align}
where $B$ is the unit ball of $\C^n$. (Note that $\tilde \Lambda \phi$ is the valuation corresponding to the curvature measure $\delta \phi$, i.e. $\tilde \Lambda \phi (A)=(\delta \phi)^A(A)$.)  

Since $\tilde L$ is a multiplication operator in a commutative algebra, the following point is obvious:
\begin{Lemma} \label{obvious lemma}For $\phi,\psi \in \valun$,
$$(\tilde L\phi)\cdot \psi = \phi \cdot(\tilde L \psi). $$
\end{Lemma}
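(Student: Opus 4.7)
The plan is to unfold the definition of $\tilde L$ and invoke the commutativity and associativity of the Alesker product, which are already in hand. Specifically, by the definition of $\tilde L$ given just before the lemma, we have $\tilde L \phi = \mu_1 \cdot \phi$ and $\tilde L \psi = \mu_1 \cdot \psi$, where the dot denotes the Alesker product on $\valsm(\C^n)$, restricted here to the subalgebra $\valun$.

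The computation then is a one-liner:
\begin{equation*}
(\tilde L \phi) \cdot \psi = (\mu_1 \cdot \phi) \cdot \psi = \mu_1 \cdot (\phi \cdot \psi) = (\phi \cdot \psi) \cdot \mu_1 = \phi \cdot (\psi \cdot \mu_1) = \phi \cdot (\mu_1 \cdot \psi) = \phi \cdot (\tilde L \psi),
\end{equation*}
where the middle equalities use associativity and commutativity of the Alesker product, both of which are established in \cite{ale04} and recalled in the preamble to the Poincar\'e duality pairing \eqref{pd pairing}.

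There is essentially no obstacle to this argument; the lemma is really just a restatement of the fact that $\tilde L$ is left-multiplication by a fixed element in a commutative associative algebra. The only thing worth noting is that we are implicitly using that $\mu_1 \in \valun$, so that multiplication by $\mu_1$ preserves $\valun$ and the identity takes place entirely within the commutative subalgebra $\valun \subset \valsm(\C^n)$; but this is immediate since $\mu_1$ is unitary invariant.
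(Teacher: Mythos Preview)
Your proof is correct and is exactly the approach the paper takes: the paper simply remarks that $\tilde L$ is multiplication by a fixed element in a commutative algebra and calls the lemma obvious. You have merely written this observation out in full.
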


We renormalize these operators by taking
\begin{align}
L&:= \frac{2\omega_k}{\omega_{k+1}} \tilde L ,\\
\label{tilde lambda}\Lambda &:= \frac{\omega_{2n-k}}{\omega_{2n-k+1}} \tilde \Lambda
\end{align}
on each homogeneous component $\valsm_k$. 

\begin{Lemma} \label{operators on tau}
\begin{align}
\label{L tau} L \tau_{k,p} &= (k-2p+1) \,\tau_{k+1,p},\\
\label {lambda tau}\Lambda \tau_{k,p} &= (2n-2p-k+1)\, \tau_{k-1,p} + (k-2p+1) \,\tau_{k-1,p-1}.
\end{align} 
\end{Lemma}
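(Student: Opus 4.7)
The plan is to read off both identities from the polynomial representation $\tau_{k,p}=\frac{\pi^k}{\omega_k(k-2p)!(2p)!}t^{k-2p}u^p$ of Proposition \ref{tasaki vals}. The $L$-identity is a direct polynomial calculation; the $\Lambda$-identity is obtained by Fourier-transporting the $L$-computation via the intertwining of the Alesker product with the Bernig-Fu convolution.

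For \eqref{L tau}: Specializing Proposition \ref{tasaki vals} to $k=1,p=0$ (with $\omega_1=2$) gives $\tau_{1,0}=\frac{\pi}{2}t$, and since this is the unique degree-one hermitian intrinsic volume we identify $\mu_1=\frac{\pi}{2}t$. The identity $\tilde L\tau_{k,p}=\mu_1\cdot\tau_{k,p}=\frac{\pi}{2}t\cdot\tau_{k,p}$ becomes a direct polynomial multiplication; comparison with the polynomial form of $\tau_{k+1,p}$ yields
\[
\tilde L\tau_{k,p}=\frac{\omega_{k+1}(k+1-2p)}{2\omega_k}\tau_{k+1,p},
\]
and the normalization $L=\frac{2\omega_k}{\omega_{k+1}}\tilde L$ then gives \eqref{L tau}.

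For \eqref{lambda tau}: The Bernig-Fu Fourier transform satisfies $\widehat{\phi*\psi}=\widehat\phi\cdot\widehat\psi$ (cf.\ \cite{befu06}), and by \eqref{fmu} applied to the unique nonzero $\mu_{2n-1,q}$ (namely $q=n-1$), $\widehat{\mu_{2n-1}}=\mu_{1,0}=\mu_1$. Hence
\[
\widehat{\tilde\Lambda\phi}=2\widehat{\mu_{2n-1}*\phi}=2\mu_1\cdot\widehat\phi=2\tilde L\widehat\phi,
\]
so on even valuations $\tilde\Lambda$ is conjugate to $2\tilde L$ via $\F$. Applied to $\tau_{k,p}$: expand $\widehat{\tau_{k,p}}$ in the $\mu_{2n-k,\cdot}$-basis via Corollary \ref{mu tasaki} and \eqref{fmu}, apply $\tilde L$ summand-by-summand, then Fourier-transform back using \eqref{fmu} and re-express in the Tasaki basis via \eqref{eq_mu_in_terms_of_sigma}. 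On degree grounds only the Tasaki indices $q=p$ and $q=p-1$ can survive, and after the $\omega$-factors from $\Lambda=\frac{\omega_{2n-k}}{\omega_{2n-k+1}}\tilde\Lambda$ telescope against those appearing in the $L$-formula, the remaining binomial coefficients must collapse to the two integers asserted in \eqref{lambda tau}.

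The main obstacle I anticipate is precisely that combinatorial collapse: the alternating binomial sums produced by the two inverse substitutions between the $\mu$- and $\tau$-bases must simplify to $(2n-2p-k+1)$ and $(k-2p+1)$ respectively. As a cross-check, Klain's theorem reduces \eqref{lambda tau} to an equality of explicit elementary symmetric functions in squared K\"ahler angles (by Lemma \ref{elem sym basis} and the defining property of the $\tau_{k,q}$), which can be verified directly on generic $E^{k-1,q}\in\Gr_{k-1,q}(\C^n)$; a completely independent sanity check is the $\sltwo$ commutation $[L,\Lambda]\tau_{k,p}=2(k-n)\tau_{k,p}$, which the formulas of Lemma \ref{operators on tau} imply by a routine arithmetic.
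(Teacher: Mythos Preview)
Your argument for \eqref{L tau} rests on the polynomial expression \eqref{eq_mu_tu} for $\tau_{k,p}$, but in this paper \eqref{eq_mu_tu} is not yet available at this point: its proof is explicitly postponed to Section~\ref{section_eigenvalues}, and that proof uses \eqref{L tau} (in the form $\tau_{k+1,p}=\frac{\pi\omega_k}{(k-2p+1)\omega_{k+1}}t\cdot\tau_{k,p}$) to carry the induction from $\tau_{2r,r}$ to general $\tau_{k,r}$. So invoking \eqref{eq_mu_tu} here is circular. The same circularity infects your plan for \eqref{lambda tau}, since once you expand $\widehat{\tau_{k,p}}$ in the $\mu$-basis and apply $\tilde L$ summand-by-summand, what you actually need is the action of $L$ on the $\mu_{k,q}$, and you have no independent access to that.

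The paper's route avoids this by working first at the level of the $\mu_{k,q}$, where the differential-form representation \eqref{eq_mu_integration} is available. One computes $\tilde\Lambda\mu_{k,q}$ directly via the Lie derivative $\mathcal L_T\theta_{k,q}$ (using $\mathcal L_T\theta_0=0$, $\mathcal L_T\theta_1=2\theta_0$, $\mathcal L_T\theta_2=\theta_1$), obtaining the clean two-term formula $\Lambda\mu_{k,q}=2(n-k+q+1)\mu_{k-1,q}+(k-2q+1)\mu_{k-1,q-1}$. The Fourier intertwining---which you correctly identified, though you apply it in the opposite direction---then gives $L\mu_{k,q}$ from this. Finally one passes to the $\tau$-basis using only \eqref{eq_def_sigma} and \eqref{eq_mu_in_terms_of_sigma}, both of which are proved independently of \eqref{eq_mu_tu}. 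The binomial identity needed in this last step is the elementary $p\binom{j}{p}=(j-p+1)\binom{j}{p-1}$; in particular, the fact that only $\tau_{k-1,p}$ and $\tau_{k-1,p-1}$ appear is a computational outcome, not a consequence of ``degree grounds'' as you suggest.
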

\begin{proof}
We show first that
\begin{align} 
\label{lambda} \Lambda \mu_{k,q} & =2(n-k+q+1) \mu_{k-1,q} + (k-2q+1) \mu_{k-1,q-1},\\
\label{ell_on_mu} L \mu_{k,q} & = 2(q+1)\mu_{k+1,q+1}+(k-2q+1) \mu_{k+1,q}. 
\end{align}

Recall from \cite{bebr07} that if $\mu(K), \mu \in \valsm(V)$, is obtained by integration over $N_1(K)$ of a differential form $\psi$ on $TV$ then $\tilde \Lambda \mu(K)$ is obtained by integration of the Lie derivative $\mathcal L_T\psi$ with respect to the Reeb vector field $T$; i.e. in the notation of \eqref{nu phi},
$$
\tilde \Lambda \nu_\psi = \nu_{\mathcal L_T\psi}.
$$
 The Lie derivatives of the $\theta_i$ with respect to $T$ are
\begin{displaymath}
\mathcal{L}_T \theta_0=0, \mathcal{L}_T \theta_1=2\theta_0,
\mathcal{L}_T \theta_2=\theta_1,
\end{displaymath}
from which one computes that 
\begin{displaymath}
\mathcal{L}_T \theta_{k,q}=\frac{\omega_{2n-k+1}}{\omega_{2n-k}} \left( 2(n-k+q+1) \theta_{k-1,q} + (k-2q+1) \theta_{k-1,q-1} \right).
\end{displaymath} 

The relation \eqref{lambda} now follows at once. 
Relation \eqref{ell_on_mu} follows from \eqref{lambda} using Equation \eqref{fmu} and the fact (which follows at once from Corollary 1.9 of \cite{befu06}) that the Fourier transform intertwines the operators $L,\Lambda$:
\begin{equation}\label{intertwine}
\widehat {}\circ L = \Lambda\circ \widehat {}.
\end{equation}
The assertions of the lemma now follow from \eqref{eq_def_sigma} and \eqref{eq_mu_in_terms_of_sigma}.
\end{proof}

\begin{Theorem} \label{thm_sl2_representation}
Let $X,Y,H$ with $[X,Y]=H, [H,X]=2X, [H,Y]=-2Y$ be generators of
  $\mathfrak{sl}(2,\mathbb{R})$. The map 
\begin{align*}
H & \mapsto 2k-2n\\
X & \mapsto L\\
Y & \mapsto \Lambda 
\end{align*}
defines a representation of $\mathfrak{sl}(2,\mathbb{R})$ on $\valun$. 
\end{Theorem}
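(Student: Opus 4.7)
The plan is to verify the three defining commutation relations for $\mathfrak{sl}(2,\mathbb{R})$ directly. Two of them are essentially automatic, and one requires a short computation in the Tasaki basis, using only Lemma \ref{operators on tau}.

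First, since $L$ raises degree by $1$ and $\Lambda$ lowers degree by $1$, for any $\phi \in \valun_k$ we have
\begin{align*}
[H,L]\phi &= H(L\phi)-L(H\phi) = (2(k+1)-2n)L\phi - L((2k-2n)\phi) = 2L\phi,\\
[H,\Lambda]\phi &= (2(k-1)-2n)\Lambda\phi - \Lambda((2k-2n)\phi) = -2\Lambda\phi,
\end{align*}
so the brackets $[H,X]=2X$ and $[H,Y]=-2Y$ hold tautologically from the grading.

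The content of the theorem is therefore the remaining relation $[L,\Lambda] = H$, i.e.\ $[L,\Lambda]\phi = (2k-2n)\phi$ for every $\phi \in \valun_k$. By Corollary \ref{mu tasaki}, the Tasaki valuations $\tau_{k,p}$ span $\valun_k$, so it suffices to verify this on each $\tau_{k,p}$. Using the two identities of Lemma \ref{operators on tau}, I would compute
\begin{align*}
L\Lambda\, \tau_{k,p} &= (2n-2p-k+1)(k-2p)\,\tau_{k,p} + (k-2p+1)(k-2p+2)\,\tau_{k,p-1},\\
\Lambda L\, \tau_{k,p} &= (k-2p+1)(2n-2p-k)\,\tau_{k,p} + (k-2p+1)(k-2p+2)\,\tau_{k,p-1}.
\end{align*}
The $\tau_{k,p-1}$ contributions cancel, and the scalar multiplying $\tau_{k,p}$ simplifies to
\begin{equation*}
(2n-2p-k+1)(k-2p) - (k-2p+1)(2n-2p-k) = (k-2p)-(2n-2p-k) = 2k-2n,
\end{equation*}
which is precisely the action of $H$ on $\valun_k$.

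There is no real obstacle here: the two ``weight'' relations are purely a consequence of the grading convention, and the bracket $[L,\Lambda]=H$ collapses after one line of algebra in the Tasaki basis. The only noteworthy point is that this representation lives on $\valun$ rather than on all of $\valsm$ (as pointed out in the paragraph preceding the theorem, Alesker has observed that the analogous assertion fails on $\valsm$); our calculation uses only $U(n)$-invariant valuations, for which the explicit Tasaki basis and the two formulas of Lemma \ref{operators on tau} are available. The boundary cases (e.g.\ $p=0$ or $p=\lfloor k/2\rfloor$) cause no trouble since the offending Tasaki valuations simply vanish and the same arithmetic identity still yields the coefficient $2k-2n$.
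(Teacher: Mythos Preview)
Your proof is correct and is precisely the direct calculation the paper alludes to: the paper's own proof consists of the single sentence ``This is a direct calculation, using Lemma \ref{operators on tau},'' and you have written out that calculation in full.
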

\begin{proof} This is a direct calculation, using Lemma \ref{operators on tau}.
\end{proof}

The following corollary is a standard fact for $\sltwo$ representations, compare \cite{huyb04} or \cite{grha78}. 

\begin{Corollary}
\begin{align}
[H,L^i] & =2i L^i\\
[L^i , \Lambda] & = i L^{i-1} \circ H + i(i-1) L^{i-1}. \label{iterated_commutators}
\end{align}
\end{Corollary}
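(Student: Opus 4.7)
The plan is to prove both identities by induction on $i$, using only the $\mathfrak{sl}(2,\mathbb{R})$-relations $[L,\Lambda]=H$, $[H,L]=2L$, $[H,\Lambda]=-2\Lambda$ supplied by Theorem \ref{thm_sl2_representation}. No representation-theoretic content beyond these relations is needed; the proof is purely formal manipulation of commutators.

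For the first identity $[H,L^i]=2iL^i$, the base case $i=1$ is $[H,L]=2L$. For the inductive step I would write
\begin{equation*}
[H,L^i]=[H,L]L^{i-1}+L[H,L^{i-1}]=2L\cdot L^{i-1}+L\cdot 2(i-1)L^{i-1}=2iL^i,
\end{equation*}
using the Leibniz rule for the commutator bracket together with the inductive hypothesis.

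For the second identity $[L^i,\Lambda]=iL^{i-1}H+i(i-1)L^{i-1}$, the base case $i=1$ reduces to $[L,\Lambda]=H$, which is one of the defining $\mathfrak{sl}(2)$-relations. For the inductive step I would compute
\begin{align*}
[L^i,\Lambda] &= L[L^{i-1},\Lambda] + [L,\Lambda]L^{i-1}\\
&= L\bigl((i-1)L^{i-2}H + (i-1)(i-2)L^{i-2}\bigr) + HL^{i-1}\\
&= (i-1)L^{i-1}H + (i-1)(i-2)L^{i-1} + HL^{i-1}.
\end{align*}
Then I would use the first identity, already established, in the form $HL^{i-1}=L^{i-1}H+[H,L^{i-1}]=L^{i-1}H+2(i-1)L^{i-1}$, and substitute to obtain
\begin{equation*}
[L^i,\Lambda] = iL^{i-1}H + \bigl((i-1)(i-2)+2(i-1)\bigr)L^{i-1} = iL^{i-1}H + i(i-1)L^{i-1},
\end{equation*}
completing the induction.

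There is no real obstacle here; the argument is entirely mechanical once Theorem \ref{thm_sl2_representation} guarantees that $L,\Lambda,H$ satisfy the $\mathfrak{sl}(2)$-relations on $\Val^{U(n)}$. The only minor care needed is to apply the Leibniz identity in the correct order and to recognize that the first identity must be proved (or at least invoked) before the second, since the latter's inductive step uses $[H,L^{i-1}]=2(i-1)L^{i-1}$.
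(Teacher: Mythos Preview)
Your proof is correct; the induction using the Leibniz rule and the $\mathfrak{sl}(2)$-relations is exactly the standard argument. The paper itself does not give a proof, merely noting that the identities are standard facts for $\mathfrak{sl}(2)$-representations and citing \cite{huyb04} and \cite{grha78}; your write-up simply fills in what those references contain.
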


We recall that an element $\pi$ in degree $k \le n$ of such a representation is called {\it primitive} if $\Lambda \pi = 0$, or equivalently, if $L^{2n-2k+1}\pi =0$.
By the Hard Lefschetz Theorem of Alesker \cite{ale03b}, and comparing dimensions, it follows that there exists a unique (up to a multiplicative constant) primitive valuation in $\valun$ in each even degree not larger than $n$. 

In the following, we use the standard notation $(2k+1)!!=(2k+1)\cdot (2k-1) \cdot (2k-3) \cdots 1$ and set formally $(-1)!!:=1$.  
 For $0\le 2r \le n$, using Lemma \ref{operators on tau} we put 

\begin{equation}
\label{pi 2r r}
 \pi_{2r,r}:=(-1)^r (2n-4r+1)!!\sum_{i=0}^r (-1)^{i} \frac{(2r-2i-1)!!}{(2n-2r-2i+1)!!}\,\tau_{2r,i}
\end{equation}
to be the unique primitive valuation of degree $2r$ whose expansion in terms of the Tasaki valuations has leading term $\tau_{2r,r}$, and define for $k\ge 2r$
\begin{align}
\label{def higher pi}\pi_{k,r}&:=L^{k-2r} \pi_{2r,r}\\
\label{pi in terms of tau}  
 &= (-1)^r(2n-4r+1)!!\sum_{i=0}^r (-1)^{i}\frac{(k-2i)!}{(2r-2i)!} \frac{(2r-2i-1)!!}{(2n-2r-2i+1)!!}\,\tau_{k,i}
\end{align}
by \eqref{L tau}.

For further use, we note that by Equation \eqref{eq_def_sigma},
\begin{align}\label{mu coeff}
\pi_{2r,r}\equiv(-1)^r \frac{(2n-4r+1)!! (2r-1)!!}{(2n-2r+1)!!}&\left(\mu_{2r,0} + \frac{2(2r-n-1)}{2r-1}\mu_{2r,1}\right) \\
\notag& \quad \mod \langle\mu_{2r,i}: i >1\rangle 
, \quad 2r\le n.
\end{align}

\begin{Proposition} \label{diagonalization} For each  $0\le k \le 2n$ the valuations $\pi_{k,r}, \ 0 \le r\le \frac {\min(k,2n-k)}{2}$ constitute a basis of $\valun_k$. Furthermore,
$$\pi_{k,r} \cdot \pi_{2n-k,s}=0, \quad r \ne s.$$
\end{Proposition}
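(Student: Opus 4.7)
The strategy will be to exploit the $\mathfrak{sl}(2)$-module structure on $\valun$ provided by Theorem \ref{thm_sl2_representation}, together with Lemma \ref{obvious lemma}, which lets one shuttle powers of the Lefschetz-type operator across the Poincar\'e duality product up to a nonzero scalar.

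First I would verify directly that each $\pi_{2r,r}$ is primitive, i.e., $\Lambda\pi_{2r,r}=0$. Applying the formula \eqref{lambda tau} term-by-term to the defining expression \eqref{pi 2r r}, the coefficient of $\tau_{2r-1,j}$ in $\Lambda\pi_{2r,r}$ receives exactly two contributions: one from the $i=j$ term through its first summand $(2n-2r-2j+1)\tau_{2r-1,j}$, and one from the $i=j+1$ term through its second summand $(2r-2j-1)\tau_{2r-1,j}$. The double-factorial identities $(2n-2r-2j+1)!!=(2n-2r-2j+1)(2n-2r-2j-1)!!$ and $(2r-2j-1)!!=(2r-2j-1)(2r-2j-3)!!$ show that these two contributions agree in magnitude and oppose in sign, so they cancel. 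The boundary cases $j=-1$ and $j=r-1$, where one of the two relevant $\tau$ values is absent, work out without difficulty.

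Next, an $\mathfrak{sl}(2)$-decomposition plus a dimension count yields the basis claim. For $k\le n$ the injectivity of $L:\valun_{k-2}\to\valun_k$ (hard Lefschetz) implies that $\ker\Lambda\cap\valun_k$ has dimension $\dim\valun_k-\dim\valun_{k-2}$, which equals $1$ for even $k\le n$ and $0$ for odd $k$. Hence the $\pi_{2r,r}$ with $0\le 2r\le n$ exhaust the primitive vectors, and $\valun$ decomposes as the direct sum over $r$ of the irreducible $\mathfrak{sl}(2)$-module $V^{(r)}$ generated by $\pi_{2r,r}$, which has dimension $2n-4r+1$. Inside $V^{(r)}$ the vector $L^i\pi_{2r,r}$ is nonzero for $0\le i\le 2n-4r$ and zero for $i=2n-4r+1$; intersecting with $\valun_k$, each $V^{(r)}$ contributes exactly the single basis vector $\pi_{k,r}=L^{k-2r}\pi_{2r,r}$ whenever $2r\le k\le 2n-2r$, which is precisely the range $0\le r\le\min(k,2n-k)/2$ asserted.

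For the orthogonality, assume without loss of generality $r<s$. Repeated application of Lemma \ref{obvious lemma} shifts all powers of $L$ onto one factor, absorbing a nonzero scalar arising from the normalization of $L$, to give
\begin{equation*}
\pi_{k,r}\cdot\pi_{2n-k,s}\;=\;c\cdot\pi_{2r,r}\cdot L^{2n-2r-2s}\pi_{2s,s}
\end{equation*}
for some $c\ne 0$. Since $s>r$ we have $2n-2r-2s\ge 2n-4s+1$, and the irreducible $V^{(s)}$ generated by the primitive $\pi_{2s,s}$ has dimension exactly $2n-4s+1$, so $L^{2n-4s+1}\pi_{2s,s}=0$ and a fortiori $L^{2n-2r-2s}\pi_{2s,s}=0$. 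The product vanishes.

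The one piece requiring actual work is the primitivity check in the first step; thereafter everything reduces to standard $\mathfrak{sl}(2)$-bookkeeping, with Lemma \ref{obvious lemma} supplying the crucial sliding mechanism that makes the orthogonality statement fall out immediately.
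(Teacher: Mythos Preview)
Your argument is correct and follows essentially the same route as the paper: the basis statement is deduced from the Lefschetz decomposition of the $\mathfrak{sl}(2)$-module $\valun$, and the orthogonality is obtained by using Lemma \ref{obvious lemma} to slide all the $L$-powers onto one factor so that the primitivity relation $L^{2n-4s+1}\pi_{2s,s}=0$ kills the product. Your explicit verification that $\Lambda\pi_{2r,r}=0$ and your dimension count simply spell out what the paper leaves implicit; the only cosmetic difference is that the paper takes $r>s$ and pushes the $L$'s onto $\pi_{2r,r}$, whereas you take $r<s$ and push them onto $\pi_{2s,s}$.
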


\begin{proof}
The fact that these elements constitute a base of $\valun_k$ follows at
once from the Lefschetz decomposition of the $\sltwo$-representation
$\valun$. If $r \neq s$, say $r>s$, then 
\begin{equation}
\pi_{k,r} \cdot \pi_{2n-k,s}= L^{k-2r} \pi_{2r,r} \cdot L^{k-2s}
\pi_{2s,s}=C \cdot L^{2n-2r-2s} \pi_{2r,r} \cdot \pi_{2s,s}=0
\end{equation} 
since $ L^{2n-4r+1}
\pi_{2r,r}=0$. \end{proof}

\begin{Lemma} \label{lemma_magic_formula}
For $0 \leq 2r \leq k \leq 2n-2r$, 
\begin{equation}  \label{magic_formula}
\widehat{ \pi_{k,r}} = \frac{(k-2r)!}{(2n-2r-k)!} \pi_{2n-k,r}.
\end{equation}
\end{Lemma}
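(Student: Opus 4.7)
The plan is to use the intertwining of the Alesker--Fourier transform with the $\sltwo$-action \eqref{intertwine} to reduce to the base case $k=2r$, and then to pin down the resulting scalar by Fourier inversion combined with a Klain-function test at a complex subspace.

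First I would use \eqref{intertwine} to write $\widehat{\pi_{k,r}} = \widehat{L^{k-2r}\pi_{2r,r}} = \Lambda^{k-2r}\widehat{\pi_{2r,r}}$, so it suffices to establish the base case $\widehat{\pi_{2r,r}} = \tfrac{1}{(2n-4r)!}\,\pi_{2n-2r,r}$. Indeed, iterating \eqref{iterated_commutators} on the primitive vector $\pi_{2r,r}$ (of weight $4r-2n$) gives the standard $\sltwo$-identity
\[
\Lambda^m L^{2n-4r}\pi_{2r,r} \;=\; \frac{(2n-4r)!\,m!}{(2n-4r-m)!}\,\pi_{2n-2r-m,\,r},
\]
and specialization to $m = k-2r$ then yields the general formula from the base case.

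Next, primitivity $\Lambda\pi_{2r,r}=0$ combined with \eqref{intertwine} forces $L\widehat{\pi_{2r,r}}=0$. By the Lefschetz decomposition of $\valun$ as an $\sltwo$-module (Theorem \ref{thm_sl2_representation} plus Alesker's Hard Lefschetz theorem), the kernel of $L$ in $\valun_{2n-2r}$ is one-dimensional---the only ``top'' vector sitting there is $\pi_{2n-2r,r}=L^{2n-4r}\pi_{2r,r}$, since the top of the irreducible generated by $\pi_{2s,s}$ lies in degree $2n-2s$. Hence $\widehat{\pi_{2r,r}} = c\,\pi_{2n-2r,r}$ for a scalar $c=c(n,r)$. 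Applying the Fourier transform twice and using its involutivity on even valuations together with the $\sltwo$-identity above at $m=2n-4r$ gives $\pi_{2r,r} = c^2\bigl[(2n-4r)!\bigr]^2\pi_{2r,r}$, so $c = \pm 1/(2n-4r)!$.

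The hard part will be fixing the sign, since Fourier inversion only determines $c^2$. My approach is a Klain-function comparison at a totally complex test subspace: take $E=\C^{n-r}\in\Gr_{2n-2r,n-r}$ with $E^\perp=\C^r\in\Gr_{2r,r}$, for which all K\"ahler angles vanish, so in the Tasaki-basis expansion \eqref{pi in terms of tau} the values $\sigma_i(\cos^2\Theta)$ collapse to $\binom{r}{i}$ and $\binom{n-r}{i}$ respectively. A direct inspection of the resulting finite double-factorial sums should show that both $\kl_{\widehat{\pi_{2r,r}}}(E)=\kl_{\pi_{2r,r}}(E^\perp)$ and $\kl_{\pi_{2n-2r,r}}(E)$ share the sign $(-1)^r$ and stand in ratio $1/(2n-4r)!$, selecting the positive sign. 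As a sanity check, the extreme case $r=0$ is immediate: $\pi_{0,0}=\chi$ and (locally) $\pi_{2n,0}=(2n)!\,\vol_{2n}$ by \eqref{pi in terms of tau}, so the identity reduces to $\widehat\chi=\vol_{2n}$, confirming $c=1/(2n)!>0$.
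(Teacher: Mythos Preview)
Your overall strategy---reducing via \eqref{intertwine} to the base case $k=2r$, identifying $\widehat{\pi_{2r,r}}$ as a scalar multiple of $\pi_{2n-2r,r}$ via the Lefschetz decomposition, and then pinning down the scalar---is sound up to the last step. The gap is the sign determination. Squaring the Fourier transform only gives $c^2$, and the Klain-function test you propose at $E=\C^{n-r}$ is not as innocent as ``direct inspection'' suggests: you would have to show that the alternating sum $\sum_{i=0}^r(-1)^i\binom{r}{i}\,(2r-2i-1)!!/(2n-2r-2i+1)!!$ is nonzero with a prescribed sign, and likewise for the companion sum on the other side. That is a nontrivial combinatorial claim you have not verified (and if either value happened to vanish the test would fail outright).

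The paper sidesteps this sign ambiguity by exploiting a fact you do not use: the Fourier transform acts as the \emph{identity} on the middle degree $\valun_n$, which is immediate from \eqref{fmu} with $k=n$. Rather than reducing to $k=2r$ and then squaring, the paper applies $\Lambda^{n-k}$ (injective on $\valun_{2n-k}$ by Hard Lefschetz) to both sides of the claimed identity. The left side becomes $\Lambda^{n-k}\widehat{\pi_{k,r}}=\widehat{L^{n-k}\pi_{k,r}}=\widehat{\pi_{n,r}}=\pi_{n,r}$, while the right side is computed by iterating $\Lambda\pi_{l,r}=(l-2r)(2n-2r-l+1)\pi_{l-1,r}$, yielding the required constant with no sign left to resolve. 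Your argument can be salvaged along the same lines: once you know $\widehat{\pi_{2r,r}}=c\,\pi_{2n-2r,r}$, apply $\Lambda^{n-2r}$ to both sides and invoke $\widehat{\pi_{n,r}}=\pi_{n,r}$ to read off $c=1/(2n-4r)!$ directly, with the positive sign for free.
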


\proof We assume, as we may, that $k\le n$.
 By the Hard Lefschetz Theorem of
Alesker \cite{ale03b}, $\Lambda^{n-k}:\valun_{2n-k} \to \valun_n$ is
injective, so it is enough to show that $ \Lambda^{n-k}\widehat{ \pi_{k,r}} = \frac{(k-2r)!}{(2n-2r-k)!}\Lambda^{n-k} \pi_{2n-k,r}$. By \eqref{intertwine} and the fact that the Fourier transform acts trivially on $\valun_n$,  the left hand side is just $\pi_{n,r}$. On the other hand, the relation
\eqref{iterated_commutators} yields 
\begin{equation*}
\Lambda \pi_{l,r} =(l-2r)(2n-2r-l+1) \pi_{l-1,r},
\end{equation*}
which after iterating $n-k$ times gives
\begin{equation*}
\Lambda^{n-k} \pi_{2n-k,r} =\frac{(2n-2r-k)!} {(k-2r)!}\pi_{n,r},
\end{equation*}
as claimed.
\endproof

{\bf Remark.} Comparing the algebra of $\valun$ to the cohomology of K\"ahler
manifolds, \eqref{magic_formula} corresponds to the {\it magic
formula} relating primitive forms, the Lefschetz operator and the Hodge
star operator (\cite{huyb04}, Prop. 1.2.31). 

\subsection{Two loose ends}
\label{section_eigenvalues}
We tie up two loose ends from sections \ref{hermitian section} and \ref{tasaki section}.

\begin{Proposition}\label{correct constant} The constants $\gamma_k$ from Lemma \ref{mu = cf} are given by
$$
\mu_{k,0}= (-1)^{k+1}\frac{(2\pi)^k}{2\omega_k(k-1)!} f_k.
$$
\end{Proposition}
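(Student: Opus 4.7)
The plan is to determine $\gamma_k$ directly by converting everything to the monomial basis in $t,u$ and comparing coefficients. All the ingredients are already displayed in Section \ref{tasaki section}, so no new geometric input is needed.

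First I would use \eqref{eq_mu_in_terms_of_sigma} with $q=0$ to write
\[
\mu_{k,0} = \sum_{i=0}^{\lfloor k/2\rfloor} (-1)^i \tau_{k,i},
\]
and then substitute the explicit form \eqref{eq_mu_tu} of $\tau_{k,i}$ to obtain
\[
\mu_{k,0} = \frac{\pi^k}{\omega_k} \sum_{i=0}^{\lfloor k/2\rfloor} \frac{(-1)^i}{(k-2i)!(2i)!}\, t^{k-2i} u^i.
\]
On the other hand, \eqref{eq_fu_polynomial_tu} gives
\[
f_k = \frac{1}{k(-2)^{k-1}} \sum_{i=0}^{\lfloor k/2\rfloor} (-1)^i \binom{k}{2i} t^{k-2i} u^i = \frac{(k-1)!}{(-2)^{k-1}} \sum_{i=0}^{\lfloor k/2\rfloor} \frac{(-1)^i}{(k-2i)!(2i)!}\, t^{k-2i} u^i,
\]
after writing $\binom{k}{2i}=\frac{k!}{(k-2i)!(2i)!}$ and absorbing the factor $k$. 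The two sums over $i$ are literally identical, so comparison yields
\[
\mu_{k,0} = \frac{(-2)^{k-1}\pi^k}{\omega_k (k-1)!}\, f_k = (-1)^{k-1}\frac{2^{k-1}\pi^k}{\omega_k (k-1)!}\, f_k = (-1)^{k+1}\frac{(2\pi)^k}{2\omega_k (k-1)!}\, f_k,
\]
which is the stated identity.

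There is no genuine obstacle; the only subtle point is that the step above is legitimate because the monomials $t^{k-2i}u^i$ in $\valuinf$ are linearly independent (they correspond under \eqref{eq_mu_tu} to the Tasaki valuations $\tau_{k,i}$, which form a basis of $\valuinf_k$ by Corollary \ref{mu tasaki}). Equivalently, one could argue that both sides of the desired equality are elements of the one-dimensional space $\R\cdot f_k = \R\cdot \mu_{k,0}$ (by Lemma \ref{mu = cf}) and read off the scalar from the coefficient of any single monomial, say $t^k$.
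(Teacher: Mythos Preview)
Your computation is formally correct, but it is circular within the paper's logical structure. The key input you invoke is \eqref{eq_mu_tu}, expressing $\tau_{k,q}$ as a monomial in $t$ and $u$. However, the proof of \eqref{eq_mu_tu} is explicitly postponed in Proposition~\ref{tasaki vals} to Section~\ref{section_eigenvalues}, and when it is finally given there, the argument \emph{uses} Proposition~\ref{correct constant}: the inductive step for $\tau_{2r,r}$ compares \eqref{eq_fu_polynomial_tu} with the expansion
\[
f_k=(-1)^{k+1}\frac{2\omega_k(k-1)!}{(2\pi)^k}\sum_i(-1)^i\tau_{k,i},
\]
and that expansion is obtained precisely from Proposition~\ref{correct constant} together with \eqref{eq_mu_in_terms_of_sigma}. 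So at the point where Proposition~\ref{correct constant} must be established, \eqref{eq_mu_tu} is not yet available, and your argument assumes what it is meant to prove.

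The paper's proof avoids this circularity by a direct induction on $k$. It uses Lemma~\ref{universal_relation_for_s} (the explicit formulas for $t^i$, $s$, $u$ in terms of the $\mu_{k,q}$) together with Lemma~\ref{anisotropic_ideal} (the anisotropic valuations form an ideal) to extract from the recursion $kf_k=-t(k-1)f_{k-1}-s(k-2)f_{k-2}$ a recursion for $\gamma_k^{-1}$: since $u\cdot\mu_{k-2,0}$ is anisotropic it contributes nothing to the $\mu_{k,0}$ coefficient, and one reads off $\gamma_k$ from $\gamma_{k-1}$ and $\gamma_{k-2}$ via \eqref{L tau}. This argument is logically prior to \eqref{eq_mu_tu} and is in fact what makes the subsequent proof of \eqref{eq_mu_tu} possible.
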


To this end we will make use of two lemmas.
We say that a valuation in $\Val^{U(n)}_k$ is {\bf anisotropic} if its Klain function vanishes on the isotropic $k$-Grassmannian $\Gr_{k,0}$. Thus the space of anisotropic valuations is spanned by the $\mu_{k,p}, p \ge 1$.

\begin{Lemma} \label{anisotropic_ideal}
The space of anisotropic valuations is an ideal in $\Val^{U(n)}$. 
\end{Lemma}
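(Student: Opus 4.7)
The plan is to lift to the global algebra $\valuinf$, where anisotropy has a transparent algebraic meaning, and then descend to $\valun$ via the surjection $\pi : \valuinf \to \valun = \valuinf/(f_{n+1},f_{n+2})$.

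First I would observe, using \eqref{eq_mu_tu}, that each Tasaki generator is a nonzero constant multiple of a monomial, $\tau_{k,q} = c_{k,q}\, t^{k-2q}u^q$. Since the Klain function of $\tau_{k,q}$ is $\sigma_q(\cos^2\Theta)$, which vanishes on $\Gr_{k,0}$ (where $\Theta = (\pi/2,\ldots,\pi/2)$, so $\cos^2\theta_i \equiv 0$) precisely when $q \geq 1$, the subspace of anisotropic global valuations is spanned by the monomials $t^a u^b$ with $b \geq 1$. Equivalently, it is the principal ideal $(u) \subset \valuinf$, which is manifestly an ideal of the graded polynomial algebra $\valuinf$.

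Next I would pass to $\valun$. Because $\pi$ is a surjective ring homomorphism, $\pi((u))$ is automatically an ideal of $\valun$, so the task reduces to identifying $\pi((u))$ with the anisotropic subspace of $\valun$. The inclusion $\pi((u)) \subseteq \{\text{anisotropic}\}$ is immediate from preservation of Klain functions. For the reverse inclusion I would split on degree. For $k \leq n$, Corollary \ref{mu tasaki} says the Tasaki valuations still form a basis of $\valun_k$, so any anisotropic $\phi \in \valun_k$ lifts to $\sum_{q \geq 1} a_q \tau_{k,q}^{\text{global}}$, which lies in $(u)$. For $k > n$, the Grassmannian $\Gr_{k,0}(\C^n)$ is empty (a totally real isotropic subspace has real dimension at most $n$), so every element of $\valun_k$ is vacuously anisotropic; and every basis element $\mu_{k,p}$ has $p \geq k-n \geq 1$, so by \eqref{eq_mu_in_terms_of_sigma} it lifts globally to a combination of $\tau_{k,i}^{\text{global}}$ with $i \geq 1$, again in $(u)$.

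I do not anticipate a serious obstacle. The one point to handle carefully is the $k > n$ case, where the statement depends on the pattern of collapsing of basis elements under $\pi$; but this bookkeeping is already encoded in the change-of-basis formulas of Section \ref{tasaki section}.
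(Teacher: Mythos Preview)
Your argument is mathematically sound \emph{if} \eqref{eq_mu_tu} is available, but within this paper that creates a circularity. The identity $\tau_{k,q} = c_{k,q}\, t^{k-2q}u^q$ is stated in Proposition~\ref{tasaki vals}, but its proof is explicitly postponed to Section~\ref{section_eigenvalues}, where it is derived from Proposition~\ref{correct constant}. The proof of Proposition~\ref{correct constant}, in turn, invokes Lemma~\ref{anisotropic_ideal} at the crucial step (``Since $u=\frac{2}{\pi}\mu_{2,1}$ is anisotropic, the same holds true for $u\cdot\mu_{k-2,0}$ by Lemma~\ref{anisotropic_ideal}''). So you are using \eqref{eq_mu_tu} to prove the very lemma on which \eqref{eq_mu_tu} depends.

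The paper avoids this by giving a direct, multiplication-theoretic proof: write $\psi(K)=\int_{\AGr_{2n-l}}\chi(K\cap\bar E)\,d\mu(\bar E)$ via a Crofton measure, so that $(\phi\cdot\psi)(K)=\int \phi(K\cap\bar E)\,d\mu(\bar E)$; if $K$ lies in an isotropic subspace then so does each slice $K\cap\bar E$, hence the integrand vanishes and $\phi\cdot\psi$ is anisotropic. This argument uses only the definition of the Alesker product and nothing from Section~\ref{tasaki section}. Your route---identifying the anisotropic subspace with the image of the principal ideal $(u)$---is exactly the content of the Remark following the lemma, and it is a cleaner structural statement; but in this paper it is a \emph{consequence} of the lemma together with \eqref{eq_mu_tu}, not a means of proving it. If you want to keep your approach, you would need an independent derivation of \eqref{eq_mu_tu} (e.g.\ a direct Klain-function computation of $t^{k-2q}u^q$), which the paper does not supply.
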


\begin{proof}[Proof of Lemma \ref{anisotropic_ideal}]
Let $\phi \in \Val^{U(n)}_k$ be anisotropic, and $\psi \in \Val^{U(n)}$ of degree $l$.  
By \cite{befu06}, section 1.2.2, we may write
\begin{displaymath}
\psi(K)=\int_{\AGr_{2n-l}(\mathbb{C}^n)} \chi(K \cap \bar E) d\mu(\bar E)
\end{displaymath}
with some smooth measure $\mu$ on the affine Grassmannian $\AGr_{2n-l}(\mathbb{C}^n)$, and the product $\phi \cdot \psi$ may be expressed
\begin{displaymath}
 \phi \cdot \psi (K)=\int_{\AGr_{2n-l}(\mathbb{C}^n)} \phi(K \cap\bar E) d\mu(\bar E).
\end{displaymath}

If $K$ is contained in an isotropic subspace, then the same trivially holds 
true for $K \cap \bar E$. Since $\phi$ is anisotropic, the integrand on the right hand side vanishes. 
It follows that $\phi \cdot \psi$ is anisotropic. 
\end{proof}

{\bf Remark.} In fact the ideal of anisotropic valuations equals the principal ideal $(u) =(\tau_{2,1}) =(\mu_{2,1})$.

\begin{Lemma} \label{universal_relation_for_s}
\begin{align}
\label{t^i} t^i & =\frac{i! \omega_i}{\pi^i} \mu_i \\
\label{s} s & =\frac{1}{\pi} \left(\mu_{2,1}+\frac12 \mu_{2,0} \right)\\
\label{u} u &= \frac{2}{\pi}\,\mu_{2,1}.
\end{align}
\end{Lemma}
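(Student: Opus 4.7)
Each of the three identities expresses an elementary valuation in the hermitian intrinsic volume basis; the proof reduces to a Klain-function template calculation on the two $U(n)$-orbits in $\Gr_2(\C^n)$.

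Identity \eqref{t^i} is essentially immediate from the defining relation of section~\ref{monomial basis} specialized to $k=i,\ p=0$:
$$t^i \;=\; \frac{i!\,\omega_i}{\pi^i}\,U_{i,0}.$$
The affine Grassmannian $\overline{\Gr}_{2n,n}(\C^n)$ consists of the single element $\C^n$, of unit Haar mass in the normalization of \eqref{standard convention}, so $U_{i,0}(K)=\mu_i(K\cap\C^n)=\mu_i(K)$ is the ordinary Euclidean intrinsic volume.

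For \eqref{s}, the same specialization with $k=2,\ p=1$ gives $s=U_{2,1}$, whose Crofton representation is an integral over affine complex hyperplanes. Since both sides of \eqref{s} lie in $\valun_2$, by Klain's injectivity theorem it suffices to match their Klain functions on one representative of each orbit: a complex line $L\in\Gr_{2,1}$ and a totally real plane $P\in\Gr_{2,0}$. Writing an affine complex hyperplane as $\bar E=E+v$ with $v\in E^\perp$, the condition $\bar E\cap K\neq\emptyset$ for a planar body $K$ becomes $v\in\pi_{E^\perp}(K)$, giving
$$U_{2,1}(K)\;=\;\int_{E\in\Gr_{2n-2,n-1}}\vol_2\!\bigl(\pi_{E^\perp}(K)\bigr)\,dE.$$
For $K$ the unit disk in $L$ the restricted projection $\pi_{E^\perp}|_L\colon L\to E^\perp$ is $\C$-linear with real Jacobian $|\langle v_L,v_{E^\perp}\rangle|^2$; for $K$ the unit disk in $P$ the projection is only $\R$-linear, with a Jacobian that is an explicit trigonometric expression in coordinates adapted to $P$. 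In both cases the remaining integral is a $U(n)$-invariant average over $\mathbb{CP}^{n-1}$, and a direct computation of these averages yields the two template values $\kl_s(L)=\tfrac{1}{\pi}$ and $\kl_s(P)=\tfrac{1}{2\pi}$, which on matching with $\kl_{\mu_{2,q}}(E^{2,q'})=\delta_{q,q'}$ produces the stated linear combination.

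Identity \eqref{u} is then a one-line consequence. From \eqref{t^i} with $i=2$ together with $\mu_2=\mu_{2,0}+\mu_{2,1}$ (since $\kl_{\mu_2}\equiv 1$ on $\Gr_2$), we get $t^2=\tfrac{2}{\pi}(\mu_{2,0}+\mu_{2,1})$; substituting this together with \eqref{s} into $u=4s-t^2$ collapses the $\mu_{2,0}$ coefficients and leaves $u=\tfrac{2}{\pi}\mu_{2,1}$. The only nontrivial step is thus the template calculation for \eqref{s}; the main care required is in verifying that the Haar normalization on $\overline{\Gr}_{2n-2,n-1}$ coming from convention \eqref{standard convention} actually produces the stated constants $\tfrac{1}{\pi}$ and $\tfrac{1}{2\pi}$. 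Because $s$ is global (section~\ref{globloc}), these constants could if necessary be pinned down once and for all by working a single low-dimensional example such as $\C^2$.
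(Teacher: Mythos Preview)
Your approach is sound but genuinely different from the paper's, and somewhat incomplete in execution.

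For \eqref{t^i}, the paper argues by induction: the base cases $t^0=\chi=\mu_0$ and $t=\frac{2}{\pi}\mu_1$ come from \cite{fu06}, and the inductive step uses the explicit action of the raising operator $L$ on the $\mu_{k,q}$ (equation \eqref{ell_on_mu}), summed over $q$. Your argument via $U_{i,0}=\mu_i$ is shorter and equally valid, though the relevant normalization is the one from \cite{fu06}, not \eqref{standard convention}.

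For \eqref{s}, the paper avoids Crofton integration entirely. It first fixes the $\mu_{2,1}$ coefficient using the local relation $s=\tfrac12 t^2$ in $\C^1$ (from $f_2=0$), which gives $s=1$ on a unit complex disc and hence $\kl_s(E^{2,1})=1/\pi$. To find the remaining coefficient $a$ of $\mu_{2,0}$, it uses the local relation $f_3=-st+\tfrac13 t^3=0$ in $\C^2$, which says $-s+\tfrac13 t^2$ is killed by $L$; applying \eqref{ell_on_mu} to $L\mu_{2,q}$ then forces $a=\tfrac12$. This is purely algebraic, leveraging the $\sltwo$ structure and the polynomial relations in $\valuinf$.

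Your route through the Crofton integral for $U_{2,1}$ is legitimate and more geometrically transparent, but you have not actually computed the isotropic value $\kl_s(E^{2,0})$: you describe the Jacobian as ``an explicit trigonometric expression'' and assert the outcome $\tfrac{1}{2\pi}$. To close the argument you must either carry out that average over $\mathbb{CP}^{n-1}$ (with the correct normalization of $d\bar E$) or, as you yourself suggest, specialize to a single low $n$ and compute directly. The paper's method trades this integral for a short linear-algebra computation that is easier to execute completely. Your derivation of \eqref{u} from \eqref{t^i} and \eqref{s} is correct, and in fact more explicit than the paper's, which leaves that step tacit.
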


\proof
Clearly $t^0=\chi=\mu_0$ and $t=\frac{2}{\pi}\mu_1$ by equations (46) and (48) of \cite{fu06}. The relation \eqref{t^i} now follows by induction using Equation \eqref{ell_on_mu} (cf. also
\cite{fu06}, Corollary 3.4).  

Theorem \ref{relations theorem} implies that $s=\frac12 t^2$ locally at $n=1$. This implies
  that the value of $s$ on a complex disc is $1$. Thus
  $s=\frac{1}{\pi} \left(\mu_{2,1}+a \mu_{2,0}\right)$ for some
  $a \in \mathbb{R}$. Meanwhile, $-st+\frac13 t^3 =f_3=0$ locally at $n=2$. Therefore $-s+\frac13 t^2$ is primitive in $\Val^{U(2)}$ with respect to
  the given $\mathfrak{sl}(2,\mathbb{R})$-representation. 
 Since $\mu_{3,0}=\mu_{3,2}=0$ locally at $n=2$, this implies that 
\begin{align*}
0=\pi L\left(-s+\frac13 t^2\right) & = L\left(-\mu_{2,1}-a\mu_{2,0}+\frac23
(\mu_{2,0}+\mu_{2,1})\right)\\
& = -\frac13 \mu_{3,1}+\left(\frac23-a\right) 2 \mu_{3,1}.
\end{align*} 
by \eqref{ell_on_mu}. Thus $a=\frac12$. 
\endproof

\begin{proof}[Proof of Proposition \ref{correct constant}]
By the 
recursion \eqref{recursion} we have 
\begin{align}
\notag k f_k & = -t(k-1) f_{k-1}-s(k-2)f_{k-2}\\
\label{fk} & = -t \gamma^{-1}_{k-1}(k-1) \mu_{k-1,0} - \frac{t^2}{4} \gamma^{-1}_{k-2}(k-2)
\mu_{k-2,0}-\frac{\gamma^{-1}_{k-2}(k-2)}{4} u  \mu_{k-2,0}.
\end{align}
Since $u=\frac{2}{\pi} \mu_{2,1}$ is anisotropic, the same holds true for $u \cdot \mu_{k-2,0}$ by Lemma \ref{anisotropic_ideal}. Comparing the coefficients of $\mu_{k,0}$ in \eqref{fk}, we obtain using \eqref{L tau}
\begin{displaymath}
k \gamma^{-1}_k  = -(k-1)\gamma^{-1}_{k-1} \frac{\omega_k}{\pi \omega_{k-1}}
k- \frac{\gamma^{-1}_{k-2}(k-2)}{4} \frac{\omega_k}{\pi^2\omega_{k-2}} k (k-1),
\end{displaymath} 
from which the Proposition follows by induction.
\end{proof}

The next loose end is 
\begin{proof}[Proof of \eqref{eq_mu_tu} from section \ref{tasaki section}] We proceed by induction on $q$. Since $\tau_{k,0} $ is the $k$th intrinsic volume $\mu_k$, the case $q=0$ is \eqref{t^i}.
For the inductive step we observe first that since \eqref{L tau} may be reformulated as
$$
\tau_{k+1,p}= \frac{\pi \omega_k}{(k-2p+1)\omega_{k+1}}t\cdot \tau_{k,p}  ,
$$
it is enough to prove the desired relation for $\tau_{2r,r}$. To accomplish this we compare the expressions \eqref{eq_fu_polynomial_tu} for the $f_k$ with
\begin{align*}
f_k &=  (-1)^{k+1}\frac{2\omega_k(k-1)!}{(2\pi)^k} \,\mu_{k,0}\\
&=  (-1)^{k+1}\frac{2\omega_k(k-1)!}{(2\pi)^k}\, \sum_{i=0}^{\lfloor k/2\rfloor} (-1)^{i}  \tau_{k,i},
\end{align*}
which follows from Corollary \ref{mu tasaki} and Proposition \ref{correct constant}. Taking $k=2r$ and equating the two expressions, \eqref{eq_mu_tu} follows from the inductive hypothesis.
\end{proof}

\begin{Corollary} \label{multiplication_u}
If $2p \leq k$ then
 \begin{align}\label{umu}
  u \cdot \mu_{k,p}&\equiv 
\frac{4(p+1)}{\pi(k+2)} (
(2p+1)\mu_{k+2,p+1}-2(p+2) \mu_{k+2,p+2}) \\
\notag& \mod\langle \mu_{k+2,i}: i > p+2\rangle.
 \end{align}
\end{Corollary}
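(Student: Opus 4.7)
The plan is to pass to the Tasaki basis, where multiplication by $u$ is transparent thanks to the explicit presentation $\tau_{k,q} = \frac{\pi^k}{\omega_k(k-2q)!(2q)!}\, t^{k-2q} u^q$ from \eqref{eq_mu_tu}. A one-line computation, using $\omega_{k+2}/\omega_k = 2\pi/(k+2)$, yields the clean identity
\begin{equation*}
u\cdot \tau_{k,q} \;=\; \frac{4(q+1)(2q+1)}{\pi(k+2)}\,\tau_{k+2,q+1}.
\end{equation*}
This reduces the claim to a change-of-basis computation between the $\tau_{k,\cdot}$ and $\mu_{k,\cdot}$ bases, which is already in hand via Proposition \ref{tasaki vals} and Corollary \ref{mu tasaki}.

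The execution is then mechanical: substitute $\mu_{k,p} = \sum_{j\ge p}(-1)^{j+p}\binom{j}{p}\tau_{k,j}$, apply the identity above term by term, and finally re-expand each $\tau_{k+2,j+1}$ in the $\mu$-basis via $\tau_{k+2,j+1} = \sum_{i\ge j+1}\binom{i}{j+1}\mu_{k+2,i}$. This produces a double sum giving each coefficient of $\mu_{k+2,i}$ as an alternating sum over $j$ with $p\le j\le i-1$. Extracting the coefficient of $\mu_{k+2,p+1}$ involves only $j=p$, so it equals $\frac{4(p+1)(2p+1)}{\pi(k+2)}$ immediately. Extracting the coefficient of $\mu_{k+2,p+2}$ involves $j=p,p+1$, and telescopes as
\begin{equation*}
\frac{4(p+1)(p+2)}{\pi(k+2)}\bigl[(2p+1)-(2p+3)\bigr] \;=\; -\frac{8(p+1)(p+2)}{\pi(k+2)},
\end{equation*}
matching the stated formula modulo terms with index $> p+2$.

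There is no real obstacle. The only mildly delicate point is the cancellation $(2p+1)-(2p+3)=-2$ that produces the factor $-2(p+2)$ in the second term; beyond that, the argument is pure bookkeeping. One could alternatively observe that $u\cdot\mu_{k,p}$ must lie in the anisotropic ideal of Lemma \ref{anisotropic_ideal}, which gives a qualitative sanity check (the coefficients of $\mu_{k+2,0}$ are forced to vanish) but does not streamline the actual extraction of the two leading coefficients, so the direct Tasaki-basis computation remains the cleanest route.
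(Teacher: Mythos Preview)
Your proof is correct and follows essentially the same route as the paper: compute $u\cdot\tau_{k,q}$ directly from \eqref{eq_mu_tu} (the paper writes the constant as $\frac{2(2q+1)(2q+2)}{\pi(k+2)}$, which is your $\frac{4(q+1)(2q+1)}{\pi(k+2)}$), then pass through the change-of-basis relations \eqref{eq_def_sigma}, \eqref{eq_mu_in_terms_of_sigma}. The paper simply asserts that the computation goes through; you have carried it out in detail, and the extraction of the two leading coefficients is correct.
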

\begin{proof} Since  $u \cdot \tau_{k,p} =\frac {2(2p+1)(2p+2)}{\pi(k+2)} \tau_{k+2,p+1}$ by \eqref{eq_mu_tu}, the desired relation \eqref{umu} may be computed from the relations \eqref{eq_def_sigma}, \eqref{eq_mu_in_terms_of_sigma} between the $\tau$ and the $\mu$.
\end{proof}

{\bf Remark.} The two sides of \eqref{umu} are in reality precisely equal, although we will not use this fact.

\subsection{The main computation}

\begin{Proposition}
For all $k \geq 2r$ 
\begin{equation} \label{product_primitives}
 (\pi_{k,r}, \widehat{\pi_{k,r}})=\frac{8^r\pi^n}{\omega_k\omega_{2n-k}}\binom{n}{2r} \frac{(k-2r)!(2n-4r)!} {(n-r)!(2n-2r-k)!} \frac{(2n-4r+1)!!}{(2n-2r+1)!!}
\end{equation}
\end{Proposition}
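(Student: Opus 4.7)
The plan is to use the magic formula of Lemma \ref{lemma_magic_formula} together with iteration of $L$ to reduce the computation to a single top-degree evaluation, then to convert that evaluation to a scalar via explicit monomial reductions, leaving a combinatorial identity.

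First I would apply Lemma \ref{lemma_magic_formula} to write
\[
(\pi_{k,r}, \widehat{\pi_{k,r}}) = \frac{(k-2r)!}{(2n-2r-k)!}\,(\pi_{k,r}, \pi_{2n-k,r}).
\]
Since $L = \frac{2\omega_j}{\omega_{j+1}}\tilde L$ on $\valun_j$ and $\tilde L = \mu_1\,\cdot$, iterating from the primitive $\pi_{2r,r}$ telescopes to
\[
\pi_{k,r} = L^{k-2r}\pi_{2r,r} = \frac{2^{k-2r}\omega_{2r}}{\omega_k}\,\mu_1^{k-2r}\pi_{2r,r},
\]
and applying Lemma \ref{obvious lemma} to the analogous expression for $\pi_{2n-k,r}$ yields
\[
\pi_{k,r}\cdot\pi_{2n-k,r} = \frac{2^{2n-4r}\omega_{2r}^2}{\omega_k\omega_{2n-k}}\,\mu_1^{2n-4r}\pi_{2r,r}^2.
\]
All $k$-dependence is thereby isolated in the prefactor $\frac{(k-2r)!}{\omega_k\omega_{2n-k}(2n-2r-k)!}$, so the whole proposition reduces to evaluating the single scalar $\tilde\Pi$ defined by $\mu_1^{2n-4r}\pi_{2r,r}^2 = \tilde\Pi\cdot\vol_{2n}$ in $\valun_{2n}$.

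Next I would expand $\pi_{2r,r}$ as an explicit polynomial in $t,u$ by combining \eqref{pi 2r r} with the identity $\tau_{2r,i} = \frac{\pi^{2r}}{\omega_{2r}(2r-2i)!(2i)!}\,t^{2r-2i}u^i$ from \eqref{eq_mu_tu}. Together with $\mu_1 = \tfrac{\pi}{2}t$, this turns $\mu_1^{2n-4r}\pi_{2r,r}^2$ into a homogeneous polynomial of valuation degree $2n$ in $t,u$. Each monomial is then converted to a scalar multiple of $\vol_{2n}$ via the local identity
\[
t^{2n-2q}u^q = \binom{n}{q}\,\frac{\omega_{2n}(2n-2q)!(2q)!}{\pi^{2n}}\,\vol_{2n},
\]
which follows from $\mu_{2n,i} \equiv 0$ locally for $i<n$ (so that $\tau_{2n,q} = \binom{n}{q}\mu_{2n,n} = \binom{n}{q}\vol_{2n}$ locally by \eqref{eq_def_sigma}) combined with \eqref{eq_mu_tu}.

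The hard part will be the resulting combinatorial identity. After all substitutions, $\tilde\Pi$ equals a double sum indexed by $(i,j) \in \{0,\dots,r\}^2$ whose summand is a product of binomials, factorials, and double factorials, which must collapse to the claimed closed form. The natural line of attack is to reindex by $q = i+j$, apply a Chu--Vandermonde-type convolution to the inner sum, and verify what remains as a terminating hypergeometric evaluation (a task within the scope of Zeilberger's algorithm). An alternative would be to induct on $r$ using the $\mathfrak{sl}(2)$-relation $\Lambda L^j\pi_{2r,r} = j(2n-4r-j+1)L^{j-1}\pi_{2r,r}$ to link successive cases. Either route is purely computational once the algebraic reductions of the previous two paragraphs are in hand.
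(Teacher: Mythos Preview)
Your reduction of the general $k$ to a single top-degree evaluation via the magic formula and the telescoping of $L$ matches the paper's last displayed computation essentially verbatim. Where you diverge is in computing that top-degree scalar. You propose to expand $\pi_{2r,r}^2$ fully as a polynomial in $t,u$, evaluate each monomial $t^{2n-2q}u^q$ locally, and then collapse the resulting double sum by a Chu--Vandermonde or Zeilberger argument that you leave open. The paper sidesteps the double sum altogether: since $t\cdot\widehat{\pi_{2r,r}}=0$, only the $u^r$ term of $\pi_{2r,r}$ survives in the pairing with $\widehat{\pi_{2r,r}}$, reducing the problem to $(u^r,\widehat{\pi_{2r,r}})$. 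This is then handled by the recursion
\[
u\cdot\widehat{\pi_{2r,r}} \;=\; \frac{8(2n-2r+3)(n-2r+1)(n-2r+2)}{\pi(2n-4r+3)(2n-4r+5)}\;\widehat{\pi_{2r-2,r-1}},
\]
which holds because both sides lie in the one-dimensional space $\ker\bigl(L\colon\valun_{2n-2r+2}\to\valun_{2n-2r+3}\bigr)$; the constant is pinned down by comparing a single $\mu$-coefficient using \eqref{mu coeff} and Corollary~\ref{multiplication_u}. Iterating $r$ times lands on $\widehat{\pi_{0,0}}=\vol_{2n}$ and the closed form drops out with no hypergeometric identity to verify. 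Your approach is correct in principle, but it trades a short structural argument for a combinatorial computation whose ``hard part'' you have not actually carried out; the paper's route eliminates that part entirely.
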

\begin{proof}
We show first that for $2r \leq n$, the value of the Poincar\'e pairing \eqref{pd pairing} of $u^r$ and $\widehat{\pi_{2r,r}}$ is
\begin{equation}\label{u pi pairing}
(u^r, \widehat{\pi_{2r,r}}) =\left(\frac 8 \pi\right)^r \frac{n!}{(n-2r)!}\frac{(2n-4r+1)!!}{(2n-2r+1)!!}
\end{equation}
This follows in turn from the relation
\begin{equation}\label{first}
u \cdot \widehat{\pi_{2r,r}} = \frac{8(2n-2r+3)(n-2r+1)(n-2r+2)}{\pi(2n-4r+3)(2n-4r+5)} \widehat{ \pi_{2r-2,r-1}},
\end{equation}
after $r$ iterations, since $\widehat{\pi_{0,0}} =\widehat {\mu_0} = \mu_{2n}$.
Both sides of \eqref{first} lie in the kernel of the map $L: \valun_{2n-2r+2} \to \valun_{2n-2r+3}$, which is one-dimensional. In order to fix the proportionality factor, it suffices to compare the coefficients of $\mu_{2n-2r+2,n-2r+2}$ on the two sides (note that locally  $\mu_{2n-2r+2,n-2r+1}=0$). It is straightforward to carry this out using \eqref{umu}
and \eqref{mu coeff}.

To prove \eqref{product_primitives} observe first that by \eqref{eq_mu_tu},
\begin{equation*}
\pi_{2r,r} \equiv \tau_{2r,r} = \frac{\pi^{2r}}{\omega_{2r}(2r)!} u^r = \frac{\pi^r r!}{(2r)!} u^r\ \mod t.
\end{equation*}
Since $t \cdot\widehat{\pi_{2r,r}} = const. \,  t \cdot\pi_{2n-2r,r} = 0$, 
the case $k=2r$ follows from \eqref{u pi pairing}, the definition \eqref{pi 2r r} of $\pi_{2r,r}$, and \eqref{eq_mu_tu}.
If $k> 2r$ we use Lemmas \ref{obvious lemma} and \ref{lemma_magic_formula} to compute
\begin{align*}
 \pi_{k,r} \cdot \widehat{\pi_{k,r}}& =\frac{(k-2r)!}{(2n-2r-k)!}\pi_{k,r}\cdot\pi_{2n-k,r}\\
&=\frac{(k-2r)!}{(2n-2r-k)!}\,(L^{k-2r}\pi_{2r,r})\cdot(L^{2n-k-2r}\pi_{2r,r})\\
&=\frac{(k-2r)!}{(2n-2r-k)!}\frac{\omega_{2r}\omega_{2n-2r}}{\omega_k\omega_{2n-k}}\,\pi_{2r,r}\cdot L^{2n-4r}\pi_{2r,r}\\
&=\frac{(k-2r)!(2n-4r)!}{(2n-2r-k)!}\frac{\omega_{2r}\omega_{2n-2r}}{\omega_k\omega_{2n-k}}\,\pi_{2r,r}\cdot\widehat{\pi_{2r,r}},\\
\end{align*}
which with the previous case yields \eqref{product_primitives}.
\end{proof}


Using Theorem \ref{g-kinematic}, the relation \eqref{product_primitives}, Proposition \ref{diagonalization} and Lemma \ref{lemma_magic_formula} now yield at once
\begin{Theorem}\label{pkf}
Set $p:=\min\left\{\lfloor \frac k 2\rfloor ,\lfloor \frac{2n-k}2\rfloor\right\}$. 
\begin{align}\label{pkf1} 
 k_{U(n)}(\chi)&=\frac 1{\pi^n}\sum_{k=0}^{2n}{\omega_k\omega_{2n-k}}\\
\notag & \quad\quad \sum_{r=0}^p \frac{(2n-2r-k)!(n-r)!}{8^r(k-2r)!(2n-4r)!} \frac{(2n-2r+1)!!}{(2n-4r+1)!!}\binom n {2r}^{-1} \pi_{k,r} \otimes \widehat{\pi_{k,r}}\\
\label{pkf2} &=\frac 1{\pi^n}\sum_{k=0}^{2n}{\omega_k\omega_{2n-k}}\\
\notag &\quad\quad\sum_{r=0}^p\frac{(n-r)!}{8^r(2n-4r)!} \frac{(2n-2r+1)!!}{(2n-4r+1)!!}\binom n {2r}^{-1}\pi_{k,r} \otimes{\pi_{2n-k,r}}
\end{align}
\end{Theorem}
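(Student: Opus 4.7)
The theorem is essentially an assembly of results already in place, so the plan is to apply Theorem \ref{g-kinematic} directly, using the basis $\{\pi_{k,r}\}$ for $\valun$ (with $k$ running over $0,\dots,2n$ and $r$ over $0,\dots,p_k$). The whole point of introducing the primitive valuations in Proposition \ref{diagonalization} is that they make the Poincar\'e-duality pairing matrix almost as simple as possible, so the inverse required in Theorem \ref{g-kinematic} is trivial to compute.

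More concretely, first I would choose both bases in Theorem \ref{g-kinematic} to be $\{\pi_{k,r}\}$ itself. The matrix $M$ with entries $M_{(k,r),(k',r')} = (\pi_{k,r},\pi_{k',r'})$ is then block-diagonal: the Poincar\'e pairing takes degree $k$ against degree $2n-k$, and within such a pair Proposition \ref{diagonalization} ensures $(\pi_{k,r},\pi_{2n-k,s})=0$ for $r\neq s$. Hence $M$ is literally diagonal once we index the second copy of the basis by $(2n-k,r)\leftrightarrow(k,r)$, and $K=M^{-1}$ is obtained entry-by-entry as
$$
K_{(k,r),(2n-k,r)} \;=\; (\pi_{k,r},\pi_{2n-k,r})^{-1}.
$$
Plugging into Theorem \ref{g-kinematic} gives
$$
k_{U(n)}(\chi) \;=\; \sum_{k,r}(\pi_{k,r},\pi_{2n-k,r})^{-1}\,\pi_{k,r}\otimes\pi_{2n-k,r}.
$$

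Second, I would evaluate the diagonal entries $(\pi_{k,r},\pi_{2n-k,r})$. Lemma \ref{lemma_magic_formula} says $\widehat{\pi_{k,r}} = \tfrac{(k-2r)!}{(2n-2r-k)!}\pi_{2n-k,r}$, so by bilinearity
$$
(\pi_{k,r},\pi_{2n-k,r}) \;=\; \tfrac{(2n-2r-k)!}{(k-2r)!}\,(\pi_{k,r},\widehat{\pi_{k,r}}),
$$
and the right-hand factor is already computed in \eqref{product_primitives}. Substituting this, the $(k-2r)!$ cancels and one gets
$$
(\pi_{k,r},\pi_{2n-k,r}) \;=\; \frac{8^r\pi^n}{\omega_k\omega_{2n-k}}\binom{n}{2r}\frac{(2n-4r)!}{(n-r)!}\frac{(2n-4r+1)!!}{(2n-2r+1)!!}.
$$
Inverting this and collecting the constants yields formula \eqref{pkf2} directly. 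To obtain \eqref{pkf1}, I would then rewrite each $\pi_{2n-k,r}$ in the tensor product using the magic formula in the reverse direction, $\pi_{2n-k,r} = \tfrac{(2n-2r-k)!}{(k-2r)!}\widehat{\pi_{k,r}}$, which reinserts the factor $(2n-2r-k)!/(k-2r)!$ and gives exactly the coefficient claimed.

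\textbf{Expected obstacle.} There is no substantive obstacle in the proof of the statement itself: all the real work is already encapsulated in Proposition \ref{diagonalization}, Lemma \ref{lemma_magic_formula}, and the computation of $(\pi_{k,r},\widehat{\pi_{k,r}})$ in \eqref{product_primitives}. The only thing to watch carefully is bookkeeping of factorials and double factorials and a consistent convention for pairing the index sets on the two sides of the tensor product so that the diagonalization really does reduce matrix inversion to reciprocation of scalars.
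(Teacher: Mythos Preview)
Your proposal is correct and matches the paper's approach exactly: the paper simply states that Theorem \ref{g-kinematic}, Proposition \ref{diagonalization}, Lemma \ref{lemma_magic_formula}, and \eqref{product_primitives} ``now yield at once'' the result, and you have spelled out precisely how. Your bookkeeping of the factorials is correct, and the only cosmetic difference is that you derive \eqref{pkf2} first and then convert to \eqref{pkf1}, whereas one could equally well take $\psi_i=\widehat{\pi_{k,r}}$ as the second basis in Theorem \ref{g-kinematic} and obtain \eqref{pkf1} directly.
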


\begin{Corollary} \label{+ def cor} The Tasaki matrices $T^n_k$, and the matrices $Q^n_k$ of \cite{fu06}, are positive definite.
\end{Corollary}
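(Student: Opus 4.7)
The plan is to show that the symmetric pairing $\langle\cdot,\cdot\rangle$ of \eqref{fourier pairing} restricted to each graded component $\valun_k$ is positive definite; from this the positive definiteness of both families of matrices will be immediate. By Theorem \ref{g-kinematic}, the matrix $T^n_k$ is the inverse of the Gram matrix $M_{ij}:=(\tau_{k,i},\widehat{\tau_{k,j}})=\langle\tau_{k,i},\tau_{k,j}\rangle$, and similarly $Q^n_k$ is the inverse of the Gram matrix of the monomial basis $\{s^p t^{k-2p}\}$ with respect to $\langle\cdot,\cdot\rangle$. Since these two bases of $\valun_k$ are related by a real change of basis, their Gram matrices are congruent over $\R$, so positive definiteness of one forces positive definiteness of the other and of their inverses.

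To establish positive definiteness of $\langle\cdot,\cdot\rangle$ on $\valun_k$, I switch to the primitive basis $\{\pi_{k,r}\}$ of Proposition \ref{diagonalization}. By that proposition, $\pi_{k,r}\cdot\pi_{2n-k,s}=0$ for $r\neq s$, and by Lemma \ref{lemma_magic_formula} the valuation $\widehat{\pi_{k,s}}$ is a positive scalar multiple of $\pi_{2n-k,s}$. Combining these,
\begin{equation*}
\langle\pi_{k,r},\pi_{k,s}\rangle=(\pi_{k,r},\widehat{\pi_{k,s}})=\frac{(k-2s)!}{(2n-2s-k)!}\,(\pi_{k,r},\pi_{2n-k,s})=0\quad (r\neq s),
\end{equation*}
so the Gram matrix of $\langle\cdot,\cdot\rangle$ in the primitive basis is diagonal.

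Next I read off the diagonal entries from \eqref{product_primitives}:
\begin{equation*}
\langle\pi_{k,r},\pi_{k,r}\rangle=(\pi_{k,r},\widehat{\pi_{k,r}})=\frac{8^r\pi^n}{\omega_k\omega_{2n-k}}\binom{n}{2r}\frac{(k-2r)!(2n-4r)!}{(n-r)!(2n-2r-k)!}\frac{(2n-4r+1)!!}{(2n-2r+1)!!},
\end{equation*}
which is manifestly a product of positive quantities. Thus $\langle\cdot,\cdot\rangle|_{\valun_k}$ is represented in the primitive basis by a positive definite diagonal matrix, hence is positive definite. By the congruence argument of the first paragraph, the Gram matrices of the Tasaki basis and of the monomial basis are both positive definite, so their inverses $T^n_k$ and $Q^n_k$ are positive definite as well.

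There is no real obstacle here beyond assembling the pieces already in place: Proposition \ref{diagonalization} supplies the off-diagonal vanishing, Lemma \ref{lemma_magic_formula} converts Poincar\'e duality pairings into the symmetric pairing $\langle\cdot,\cdot\rangle$, and \eqref{product_primitives} supplies the positivity of the diagonal entries. The only thing to verify carefully is that every factor appearing in \eqref{product_primitives} is positive in the admissible range $0\le 2r\le \min\{k,2n-k\}$, which is immediate from the constraints on indices.
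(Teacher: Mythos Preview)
Your argument for $T^n_k$ is essentially the paper's: both show that the pairing $\langle\cdot,\cdot\rangle$ diagonalizes on the primitive basis $\{\pi_{k,r}\}$ with positive diagonal entries given by \eqref{product_primitives}, hence is positive definite, whence its Gram matrix in the Tasaki basis and the inverse $T^n_k$ are positive definite.

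There is, however, a gap in your treatment of $Q^n_k$. You assert that $Q^n_k$ is the inverse of the Gram matrix of the monomial basis with respect to the \emph{same} pairing $\langle\cdot,\cdot\rangle$, and then invoke congruence of Gram matrices under change of basis. But $Q^n_k$ (as defined in \cite{fu06} and recalled in the paper's own proof) is the inverse of the matrix of a \emph{different} bilinear form on $\valun_k$, namely $(\phi,\psi)\mapsto(\phi,\,t^{2n-2k}\psi)$, not of $\langle\phi,\psi\rangle=(\phi,\widehat\psi)$. Since $\widehat\psi$ is in general not a scalar multiple of $t^{2n-2k}\psi$, these are genuinely distinct bilinear forms, and your congruence argument --- which compares two bases for one and the same form --- does not apply.

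The repair is easy and is what the paper does: observe that this second form \emph{also} diagonalizes on the primitive basis. Indeed $t^{2n-2k}\pi_{k,s}$ is a positive multiple of $L^{2n-2k}\pi_{k,s}=\pi_{2n-k,s}$, which by Lemma~\ref{lemma_magic_formula} is in turn a positive multiple of $\widehat{\pi_{k,s}}$. Hence $(\pi_{k,r},\,t^{2n-2k}\pi_{k,s})$ vanishes for $r\ne s$ by Proposition~\ref{diagonalization}, and for $r=s$ is a positive multiple of the positive quantity in \eqref{product_primitives}. This yields positive definiteness of the second form and therefore of $Q^n_k$.
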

\begin{proof} These matrices are the inverses of those arising respectively by expressing the bilinear forms
$$(\phi ,\psi) \mapsto \phi \cdot \widehat \psi, \quad (\phi ,\psi) \mapsto  \phi \cdot t^{2n-k}\psi $$
on $\valun_k$ in terms of specific bases (the Tasaki valuations in the first case and the monomials in $s$ and $t$ in the second). Both of these diagonalize upon change of basis to the $\pi_{k,r}$, and the diagonal entries are the inverses of the (positive) coefficients of \eqref{pkf1} in the first case, and positive multiples of these in the second (by Lemma \ref{lemma_magic_formula} and the definition \eqref{def higher pi} of the $\pi_{k,r}$).
\end{proof}

Expanding via \eqref{pi in terms of tau} we obtain
\begin{Corollary}\label{tasaki entries} The $(i,j)$ entry of the Tasaki matrix $T^n_k$ is
\begin{align*}
\left(T^n_k\right)_{ij}& = (-1)^{i+j}\frac{\omega_k\omega_{2n-k}}{\pi^n }\\
&  \sum_{r=\max(i,j)}^{\lfloor \frac k 2\rfloor}
\left[\binom n{2r}^{-1}\frac{(2n-2r-k)!(n-r)!(k-2i)!(k-2j)!}{8^r(k-2r)!(2n-4r)!(2r-2i)!(2r-2j)!}\right.\\
&\quad\left.\times
\frac{(2n-2r+1)!! (2n-4r+1)!!(2r-2i-1)!!(2r-2j-1)!!}{(2n-2r-2i+1)!!(2n-2r-2j+1)!!}
\right].
\end{align*}
\end{Corollary}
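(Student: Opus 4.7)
The plan is a direct expansion: by the definition of the Tasaki matrix, $(T^n_k)_{ij}$ is precisely the coefficient of $\tau_{k,i}\otimes\widehat{\tau_{k,j}}$ in the degree-$k$ part of $k_{U(n)}(\chi)$, so I only need to substitute the expansion \eqref{pi in terms of tau} of $\pi_{k,r}$ into the closed form \eqref{pkf1} already established in Theorem \ref{pkf}, and collect coefficients.

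First, since the Fourier transform is linear, the expansion \eqref{pi in terms of tau} immediately yields
\begin{equation*}
\widehat{\pi_{k,r}} = (-1)^r(2n-4r+1)!!\sum_{j=0}^r (-1)^{j}\frac{(k-2j)!}{(2r-2j)!}\frac{(2r-2j-1)!!}{(2n-2r-2j+1)!!}\,\widehat{\tau_{k,j}}.
\end{equation*}
Taking the tensor product with the analogous expansion of $\pi_{k,r}$, the two factors of $(-1)^r$ multiply to $1$, and the two factors of $(2n-4r+1)!!$ multiply to $(2n-4r+1)!!^{\,2}$. Hence $\pi_{k,r}\otimes\widehat{\pi_{k,r}}$ equals
\begin{equation*}
(2n-4r+1)!!^{\,2}\sum_{i,j=0}^{r}(-1)^{i+j}\frac{(k-2i)!(k-2j)!(2r-2i-1)!!(2r-2j-1)!!}{(2r-2i)!(2r-2j)!(2n-2r-2i+1)!!(2n-2r-2j+1)!!}\,\tau_{k,i}\otimes\widehat{\tau_{k,j}}.
\end{equation*}

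Next I would insert this into \eqref{pkf1}. The prefactor of $\pi_{k,r}\otimes\widehat{\pi_{k,r}}$ there contains $\frac{(2n-2r+1)!!}{(2n-4r+1)!!}$; one of the two factors of $(2n-4r+1)!!$ from the tensor product expansion cancels the denominator here, leaving a single factor $(2n-4r+1)!!$ in the numerator, matching the stated formula. The remaining combinatorial factors $\frac{(2n-2r-k)!(n-r)!}{8^r(k-2r)!(2n-4r)!}\binom{n}{2r}^{-1}$ and $\frac{\omega_k\omega_{2n-k}}{\pi^n}$ are carried over unchanged.

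Finally, the range of summation is forced by the triangular shape of \eqref{pi in terms of tau}: the coefficient of $\tau_{k,i}$ in $\pi_{k,r}$ vanishes when $r<i$, so only $r\ge\max(i,j)$ contributes; the upper bound $r\le\lfloor k/2\rfloor$ is inherited from \eqref{pkf1}. Collecting these pieces gives exactly the displayed formula for $(T^n_k)_{ij}$. There is no substantive obstacle in this argument; it is entirely a matter of bookkeeping of signs, factorials, and double factorials, with the only mild subtlety being the cancellation of one copy of $(2n-4r+1)!!$ from the squared factor against the denominator already present in \eqref{pkf1}.
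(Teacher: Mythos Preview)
Your proposal is correct and follows exactly the approach indicated in the paper, which simply states ``Expanding via \eqref{pi in terms of tau} we obtain'' without giving details. You have supplied precisely those details: substitute the expansion \eqref{pi in terms of tau} and its Fourier transform into \eqref{pkf1}, track the cancellation of one factor of $(2n-4r+1)!!$, and read off the coefficient of $\tau_{k,i}\otimes\widehat{\tau_{k,j}}$; the only point you leave implicit is that for the Tasaki matrix one has $k\le n$, so that $p=\min\{\lfloor k/2\rfloor,\lfloor(2n-k)/2\rfloor\}=\lfloor k/2\rfloor$ and the upper summation limit is as stated.
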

We have not been able to simplify this expression further.
However, for fixed $k$, the above sum is finite and can be computed in a closed form. Thus it is straightforward (albeit messy) to calculate
\begin{equation}\label{t n 2}
T^n_2= \frac{1}{4n(n-1)}\left(\begin{matrix} 2n-1 & -1\\
-1 & 2n-1
\end{matrix}\right)
\end{equation}
\begin{equation}\label {t n 3}
T^n_3= \frac{2^{n-2}(n-3)!}{n\pi(2n-3)!!}\left(\begin{matrix} 2n-3 & -1\\
-1 & \frac{2n-1}{3}
\end{matrix}\right)
\end{equation}
\begin{align}\label{t n 4}
T^n_4&= \frac{(n-4)!}{16\, n!} \left(\begin{matrix} 3(2n-5)(2n-3) & -3(2n-3) & 9\\
-3(2n-3) & 2n^2-4n+3 & -3(2n-3)\\
9& -3(2n-3) & 3(2n-5)(2n-3)
\end{matrix}\right),
\end{align}
etc. The matrices $T^n_2, T^3_3$ had previously been computed in \cite{tasaki03} using the template method.

Note that since $k= 2,4$ are even, the matrices $T^n_2, T^n_4$ display both the expected diagonal symmetry and the antidiagonal symmetry predicted by Theorem \ref{palindrome thm}. In fact that theorem gives a family of identities among the values given in Corollary \ref{tasaki entries} whenever $k$ is even. From a practical perspective this is an aid in computing closed forms for these expressions, since for larger values of $i,j$ the sum in Corollary \ref{tasaki entries} is shorter.


\subsection{Other kinematic formulas}
\label{kinematic formulas}
Of course the whole point of the computations above is to give explicit forms for the kinematic formulas 
$$k_{U(n)}(\chi)(A,B) = \int_{\overline{U(n)} }\chi(A \cap \bar g B) \, d\bar g,$$ 
which in turn specialize to Crofton formulas when $A,B\subset \C^n$ are compact $C^1$ submanifolds (or even rectifiable sets) of complementary dimension. By the transfer principle (Theorem  \ref{transfer principle}), the latter formulas hold verbatim if $\C^n$ and $\overline{U(n)}$ are replaced by the spaces $\C P^n$ or $\C H^n$ of constant holomorphic sectional curvature together with their groups of isometries, with measures $d \bar g$ given by the standard convention \eqref{standard convention}. 

In the case of $\C P^n$, however, another natural convention is to take $d\bar g$ to be a probability measure. The resulting Crofton formulas may then be viewed as a generalization of B\'ezout's theorem. Normalizing the metric to be the standard Fubini-Study metric (i.e. with holomorphic sectional curvature $4$), they are obtained by dividing the constants above by $\vol_{2n}(\C P^n)= \frac{\pi^n}{n!}$. It is reassuring to recover B\'ezout's theorem for pairs (algebraic curve, algebraic hypersurface) and (algebraic surface, algebraic variety of codimension 2) from the matrices \eqref{t n 2}, \eqref{t n 4}, using the fact that for varieties $V^k, W^{n-k}\subset \C P^n$
\begin{align*}
\tau_{2k,q}(V) =\binom k q \mu_{2k,k}(V )&= \binom k q \mu_{2k}(V) =\binom k q  \frac{\pi^k}{k!}\deg(V), \\
\widehat{\tau_{2k,q}}(W) =\binom k q \mu_{2n-2k,n-k}(W )&= \binom k q \mu_{2n-2k}(W) =\binom k q  \frac{\pi^{n-k}}{(n-k)!}\deg(W),
\end{align*}
as may be computed via \eqref{eq_def_sigma}.

The calculations above also permit us to compute in explicit form the kinematic formulas $k_{U(n)}(\tau_{k,p})$, using the fundamental relation \eqref{general kf} and the product formula
\begin{equation}\label {tasaki product formula}
\tau_{k,p}\cdot \tau_{l,q} = \frac{\omega_{k+l}}{\omega_k \omega_l} \binom {k+l-2p-2q}{k-2p}\binom{2p+2q}{2p}\tau_{k+l,p+q},
\end{equation}
which is a simple consequence of \eqref{eq_mu_tu}. 
Rather than write down further messy general formulas, we illustrate by
computing the expected value of the length of the curve given by the intersection of a real 4-fold and a real 5-fold in $\C P^4$.
\begin{Theorem} Let $M^4, N^5 \subset \C P^4$ be real $C^1$ submanifolds of dimension $4,5$ respectively. Let $\theta_1,\theta_2$ be the K\"ahler angles of the tangent plane to $M$ at a general point $x$ and $\psi$ the K\"ahler angle of the orthogonal complement to the tangent plane to $N$ at $y$. Let $dg$ denote the invariant probability measure on $U(5)$. Then
\begin{align*}
\int_{U(5)}\length &({M \cap g N} )\, dg= \\
\left.\frac{1}{5\pi^4} \times\right[&
 30\vol_4(M)\vol_5(N) 
 -6 \vol_4(M)\int_N \cos^2\psi\, dy \\
&\quad-3\int_M (\cos^2\theta_1+\cos^2\theta_2 )\, dx \cdot \vol_5(N)\\
&\quad\left.+7\int_M (\cos^2\theta_1+\cos^2\theta_2)\, dx\cdot \int_N \cos^2\psi\, dy \right].
\end{align*}
\end{Theorem}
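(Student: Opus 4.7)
The plan is to reduce via the transfer principle to a computation in $\C^4$, apply the general kinematic formula \eqref{general kf} with $\mu=\mu_1$, and then unwrap everything geometrically via Klain functions.

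First, since $T_p\C P^4\simeq\C^4$ as unitary modules, Theorem \ref{transfer principle} reduces the problem to a computation in flat $\C^4$. Normalizing the Haar measure on $U(5)$ to a probability measure amounts to dividing the $\overline{U(4)}$-result by $\vol_8(\C P^4)=\pi^4/4!$, so I would establish
$$
\int_{U(5)}\length(M\cap gN)\,dg \;=\;\frac{24}{\pi^4}\int_{\overline{U(4)}}\length(M\cap\bar g N)\,d\bar g.
$$
Since $\dim M+\dim N-8=1$, the intersection is generically a $C^1$ curve, and $\length=\mu_1=\tau_{1,0}$ on $1$-dimensional sets, so the right-hand integrand is exactly the bidegree $(4,5)$ part of $k_{U(4)}(\mu_1)(M,N)$.

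Second, I apply \eqref{general kf} in the form $k_G(\mu_1)=\sum K_{ij}(\mu_1\cdot\phi_i)\otimes\psi_j$ with Tasaki bases $\phi_i=\tau_{3,i}$ and $\psi_j=\widehat{\tau_{3,j}}$, $i,j=0,1$. This selects precisely the bidegree $(4,5)$ component (since $\deg(\mu_1\cdot\tau_{3,i})=4$ and $\deg\widehat{\tau_{3,j}}=5$), and the relevant block of $K$ is the Tasaki matrix $T^4_3$ recorded in \eqref{t n 3}.

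Third, the Tasaki product formula \eqref{tasaki product formula} expresses each $\tau_{1,0}\cdot\tau_{3,i}$ as an explicit scalar multiple of $\tau_{4,i}$, and the Klain functions of the Tasaki valuations unwrap into
$$
\tau_{4,0}(M)=\vol_4(M),\qquad \tau_{4,1}(M)=\int_M(\cos^2\theta_1+\cos^2\theta_2)\,dx,
$$
$$
\widehat{\tau_{3,0}}(N)=\vol_5(N),\qquad \widehat{\tau_{3,1}}(N)=\int_N\cos^2\psi\,dy,
$$
using that for $N^5$ the orthogonal complement $(T_yN)^\perp$ is a $3$-plane whose multiple K\"ahler angle collapses to the single value $\psi$.

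Finally, substituting these expressions, together with the numerical entries of $T^4_3$ and the overall factor $24/\pi^4$, yields the claimed formula. No conceptual obstacle arises; the only delicacy is careful bookkeeping of constants and conventions---principally the Haar versus probability normalization, and the identification of $T^n_k$ as a block of $K=M^{-1}$ rather than of the pairing matrix $M$ itself.
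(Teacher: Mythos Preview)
Your proposal is correct and follows essentially the same strategy as the paper: transfer to $\C^4$, apply the general kinematic relation \eqref{general kf} with $\mu=\tau_{1,0}$, and read off the bidegree $(4,5)$ part in Tasaki coordinates. The one genuine difference is the choice of block. The paper takes $\phi_i=\tau_{4,i}$, $\psi_j=\widehat{\tau_{4,j}}=\tau_{4,j}$, uses $T^4_4$, multiplies $\tau_{1,0}\cdot\tau_{4,i}$ into $\valun_5$, and then must rewrite the resulting $\tau_{5,i}$ in terms of the $\widehat{\tau_{3,j}}$ via the local relations $\tau_{5,0}=\widehat{\tau_{3,0}}$, $\tau_{5,1}=\widehat{\tau_{3,0}}+\widehat{\tau_{3,1}}$, $\tau_{5,2}=\widehat{\tau_{3,1}}$. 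You instead take $\phi_i=\tau_{3,i}$, $\psi_j=\widehat{\tau_{3,j}}$, so the relevant block is $T^4_3$ from \eqref{t n 3}; then $\tau_{1,0}\cdot\tau_{3,i}$ lands directly in $\operatorname{span}\{\tau_{4,0},\tau_{4,1}\}$ and no change of basis is needed. Both are legitimate specializations of \eqref{general kf} (the pairing matrix is block diagonal in the graded Tasaki/Fourier--Tasaki bases, so the block of $K$ really is the inverse of the block of $M$); your route is marginally shorter, while the paper's route illustrates the local identification $\valun_{2n-k}\simeq\valun_k$ via the Fourier transform.
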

\begin{proof} If $l \subset \C P^n$ is a real curve, then $\tau_{1,0}(l) = \length(l)$. Thus by the transfer principle we wish to compute the terms of bidegree $(4,5)$ in $\frac{4!}{\pi^4}k_{U(4)}(\tau_{1,0})$. Since
$$ \tau_{4,0}\cdot\tau_{1,0}= \frac83 \tau_{5,0},\quad
\tau_{4,1}\cdot\tau_{1,0}= \frac85 \tau_{5,1}, \quad
\tau_{4,2}\cdot\tau_{1,0}= \frac{8}{15} \tau_{5,2}
$$
the matrix giving the relevant terms is 
\begin{equation*}
\frac{1}{10 \pi^4} \left(\begin{matrix} 75 & -15 & 3\\ -25 & 19 &  -5\\                          
15 & -15 & 15
  \end{matrix}\right)
\end{equation*}
where the columns are indexed by the $\tau_{5,i}$ and the rows by the $\widehat{\tau_{4,j}} =\tau_{4,j}$.
Locally at $n=4$,
$$
\tau_{5,0}= \widehat{\tau_{3,0}},\quad
\tau_{5,1}= \widehat{\tau_{3,0}}+ \widehat{\tau_{3,1}},\quad
\tau_{5,2}= \widehat{\tau_{3,1}}
$$ 
so with respect to the bases $ \widehat{\tau_{3,i}}, \tau_{4,j}$ one computes the pairing matrix to be
\begin{equation}\label{length pairing matrix}
\frac{1}{5\,\pi^4} \left(\begin{matrix} 30 & -6\\
-3 & 7\\
0 & 0
\end{matrix}\right).
\end{equation}
\end{proof}

By \cite{befu06}, we can also translate this result to give an additive kinematic formula for the average 7-dimensional volume of the Minkowski sum of two convex subsets in $\C^4$ of dimensions 3 and 4 respectively. 
\begin{Theorem} Let $E\in \Gr_4(\C^4), F\in \Gr_3(\C^4)$; let $\theta_1,\theta_2$ be the K\"ahler angles of $E$ and $\psi$ the K\"ahler angle of $F$. Let $dg$ be the invariant probability measure on $U(4)$. If $A\in \K(E), B\in \K(F)$ then
\begin{align*}\int_{U(4)}\vol_7& (A+ gB)\, dg=\frac{1}{120} \vol_4(A)\vol_3(B) \times \\
&\left[30 -6 \cos^2\psi -3(\cos^2\theta_1+\cos^2\theta_2) + 7\cos^2\psi (\cos^2\theta_1+\cos^2\theta_2)\right].
\end{align*}
\end{Theorem}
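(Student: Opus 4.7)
The plan is to deduce this Minkowski-sum kinematic formula from the preceding intersection formula for $\length(M\cap gN)$ in $\C P^4$ by invoking the Fourier-transform duality between multiplicative and additive kinematic operators established in \cite{befu06}.

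First I would observe that, for $A\in\K(E),B\in\K(F)$ with $\dim E=4,\dim F=3$, the Minkowski sum $A+gB$ is $7$-dimensional for generic $g\in U(4)$, so $\vol_7(A+gB)=\mu_7(A+gB)$. In $\valun(\C^4)$ the space of hermitian intrinsic volumes of degree $7$ is one-dimensional, spanned by $\mu_{7,3}$, and by \eqref{fmu} it coincides with $\widehat{\mu_{1,0}}=\widehat{\tau_{1,0}}$. Hence the desired quantity is the bidegree $(4,3)$ component of $a_{U(4)}(\widehat{\tau_{1,0}})$ evaluated on $(A,B)$.

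Next I would invoke the duality from \cite{befu06}: with the appropriate normalization of Haar measures, $a_G(\widehat\phi)=(\F\otimes\F)\circ k_G(\phi)$. Specializing to $\phi=\tau_{1,0}$, the bidegree $(4,3)$ component of the left-hand side is the image under $\F\otimes\F$ of the bidegree $(4,5)$ component of $k_{U(4)}(\tau_{1,0})$, which was already computed in the proof of the previous theorem and recorded as the matrix \eqref{length pairing matrix} in the basis $\{\tau_{4,j}\otimes\widehat{\tau_{3,i}}\}$. Since $\F^2=\mathrm{id}$ on even valuations, applying $\F\otimes\F$ simply rewrites the same matrix of coefficients in the dual basis $\{\widehat{\tau_{4,j}}\otimes\tau_{3,i}\}$.

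Finally I would evaluate each factor on $(A,B)$. Directly from the definition of the Tasaki valuation,
\[
\tau_{3,i}(B)=\sigma_i(\cos^2\psi)\,\vol_3(B),
\]
and
\[
\widehat{\tau_{4,j}}(A)=\kl_{\tau_{4,j}}(E^\perp)\,\vol_4(A)=\sigma_j(\cos^2\theta_1,\cos^2\theta_2)\,\vol_4(A),
\]
where the second equality uses the fact that a $4$-plane in $\C^4$ has the same multiple K\"ahler angle as its orthogonal complement. This identity is an immediate consequence of Tasaki's explicit description of orthonormal bases adapted to the decomposition $E\oplus E^\perp$ in section \ref{tasaki section}: the K\"ahler form restricted to $E^\perp$ exhibits the same pair of eigenvalues $\cos\theta_1,\cos\theta_2$ as on $E$. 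Summing the four non-vanishing contributions from \eqref{length pairing matrix} then assembles the bracketed expression in the theorem.

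The main obstacle is the careful bookkeeping of normalizations. The matrix \eqref{length pairing matrix} already incorporates the factor $4!/\pi^4=\vol_8(\C P^4)^{-1}$ used in the preceding proof to pass from $k_{U(4)}$ on $\C^4$ to an intersection formula on $\C P^4$ with its invariant probability measure. In the present setting one integrates instead over the compact group $U(4)$ with its probability Haar measure, and the duality $a_G(\widehat\phi)=(\F\otimes\F)\circ k_G(\phi)$ must be checked to reproduce the coefficient $1/120$ in the statement rather than $1/(5\pi^4)$; the ratio is once again $\pi^4/4!=\vol_8(\C P^4)$, a factor that must be traced through the conventions relating the Alesker and convolution products and their associated kinematic operators.
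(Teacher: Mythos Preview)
Your approach is essentially identical to the paper's: both invoke the duality $a_{U(4)}(\mu_7)=\widehat{k_{U(4)}(\mu_1)}$ from \cite{befu06} and then read off the bidegree $(4,3)$ coefficients as $\frac{\pi^4}{4!}$ times the matrix \eqref{length pairing matrix}, evaluating the resulting Tasaki valuations on $(A,B)$ exactly as you describe. Your normalization discussion is correct in substance---the factor $\pi^4/4!$ enters simply because \eqref{length pairing matrix} records $\frac{4!}{\pi^4}k_{U(4)}(\tau_{1,0})$ rather than $k_{U(4)}(\tau_{1,0})$ itself, and the duality formula requires no further adjustment.
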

\begin{proof} Recall that the additive kinematic operator $a_{U(4)}: \Val^{U(4)}(\C^4)\to \Val^{U(4)}(\C^4)\otimes\Val^{U(4)}(\C^4)$ is given by
$$
a_{U(4)}(\phi) (A,B) = \int_{U(4)} \phi(A+gB)\, dg.
$$
By Theorem 1.7 of \cite{befu06}, 
$$a_{U(4)}(\mu_7)= \widehat{ k_{U(4)}( \widehat{\mu_7}) } =\widehat{ k_{U(4)}( {\mu_1}) }.
$$
Thus the bidegree $(3,4)$ terms of 
$a_{U(4)}(\mu_7)$ are given with respect to the bases $\tau_{3,i}, \tau_{4,j}$ by the matrix $\frac{\pi^4}{4!}\times$  \eqref{length pairing matrix}.
\end{proof}

\end{document}